\documentclass[reqno]{amsart}

\usepackage{amssymb,amsfonts,amstext,amsthm,hyperref,cleveref,xcolor}

\newtheorem{thrm}{Theorem}[section]
\newtheorem{cor}[thrm]{Corollary}
\newtheorem{lem}[thrm]{Lemma}
\newtheorem{prop}[thrm]{Proposition}

\theoremstyle{definition}
\newtheorem{defn}[thrm]{Definition}

\newtheorem{rem}[thrm]{Remark}

\crefrangeformat{equation}{#3(#1)#4--#5(#2)#6}

\crefname{thrm}{Theorem}{Theorems}
\crefname{lem}{Lemma}{Lemmas}
\crefname{cor}{Corollary}{Corollaries}
\crefname{prop}{Proposition}{Propositions}
\crefname{defn}{Definition}{Definitions}
\crefname{exm}{Example}{Examples}
\crefname{rem}{Remark}{Remarks}
\crefname{section}{Section}{Sections}
\crefname{equation}{\unskip}{\unskip}
\crefname{enumi}{\unskip}{\unskip}

\renewcommand{\iff}{\Leftrightarrow}

\newcommand{\vf}{\varphi}
\newcommand{\af}{\alpha}
\newcommand{\bt}{\beta}

\newcommand{\0}{\theta}
\newcommand{\e}{\epsilon}

\newcommand{\tl}{\tilde}
\newcommand{\x}{\bar{x}}

\newcommand{\cC}{\mathcal C}
\newcommand{\ob}[1]{\operatorname{\mathrm{Ob}}{#1}}
\newcommand{\Mor}[1]{\operatorname{\mathrm{Mor}}(#1)}
\newcommand{\mor}[2]{\operatorname{\mathrm{Mor}}(#1,#2)}
\newcommand{\End}[1]{\operatorname{\mathrm{End}}(#1)}
\newcommand{\id}{\mathrm{id}}
\newcommand{\sst}{\subseteq}

\begin{document}

\title[Jordan Isomorphisms of the Finitary Incidence Ring of a Pocategory]{Jordan Isomorphisms\\ of the Finitary Incidence Ring\\ of a Partially Ordered Category}

\author{Rosali Brusamarello}
\address{Departamento de Matem\'atica, Universidade Estadual de Maring\'a, Maring\'a --- PR, CEP: 87020--900, Brazil}
\email{brusama@uem.br}

\author{\'Erica Z. Fornaroli}
\address{Departamento de Matem\'atica, Universidade Estadual de Maring\'a, Maring\'a --- PR, CEP: 87020--900, Brazil}
\email{ezancanella@uem.br}

\author{Mykola Khrypchenko}
\address{Departamento de Matem\'atica, Universidade Federal de Santa Catarina,  Campus Reitor Jo\~ao David Ferreira Lima, Florian\'opolis --- SC, CEP: 88040--900, Brazil}
\email{nskhripchenko@gmail.com}

\begin{abstract}
	Let $\cC$ be a pocategory, $FI(\cC)$ the finitary incidence ring of $\cC$ and $\vf$ a Jordan isomorphism of $FI(\cC)$ onto an associative ring $A$. We study the problem of decomposition of $\vf$ into the (near-)sum of a homomorphism and an anti-homomorphism. In particular, we obtain generalizations of the main results of \cite{Akkurts-Barker,BFK}.
\end{abstract}

\subjclass[2010]{Primary 16S50, 17C50; Secondary 16W10}

\keywords{Jordan isomorphism, near-sum, homomorphism, anti-homomorphism, pocategory, finitary incidence ring}

\maketitle

\section*{Introduction}\label{intro}

The study of Jordan maps on the ring  of upper triangular matrices  was iniciated by L.~Moln\'ar and P.~\v{S}emrl~\cite{Molnar-Semrl98}. They proved in~\cite[Corollary 4]{Molnar-Semrl98} that each Jordan automorphism on the ring $T_n(R)$ of upper triangular matrices is either an automorphism or an anti-automorphism, where $ R$ is a field with at least $3$ elements. K.~I.~Beidar, M.~Bre\v{s}ar and M.~A.~Che\-bo\-tar generalized this result in ~\cite{BeBreChe}, where they considered the case when $R$ is a $2$-torsionfree unital commutative ring without non-trivial idempotents and $n\ge 2$ and showed that each Jordan isomorphism of $T_n( R)$ onto a $R$-algebra is either an isomorphism or an anti-isomorphism. D.~Benkovi\v{c}, using the new notion of near-sum, described in ~\cite[Theorem 4.1]{Benkovic05} all Jordan homomorphisms $T_n(R)\to A$, where $R$ is an arbitrary $2$-torsionfree commutative ring and $A$ is an $R$-algebra. E.~Akkurt, M.~Akkurt and G.~P.~Barker extended in~\cite[Theorem 2.1]{Akkurts-Barker} Benkovi\v{c}'s result to structural matrix algebras $T_n(R,\rho)$, where $\rho$ is either a partial order or a quasi-order each of whose equivalence classes contains at least $2$ elements.

It should be noted that the structural matrix algebra $T_n( R,\rho)$ is isomorphic to the incidence algebra $I(P,R)$ of the ordered set $P=(\{1, \ldots, n\},\rho)$ over $R$ as defined in ~\cite{SpDo}. A generalization of incidence algebras to the case of non-locally finite posets appeared in ~\cite{Khripchenko-Novikov09}, where the authors defined the so-called finitary incidence algebras $FI(P,R)$. R.~Brusamarello, E.~Z.~Fornaroli and M.~Khrypchenko~\cite{BFK} extended one of the main results of~\cite[Theorem 2.1]{Akkurts-Barker} to the case of $FI(P,R)$, namely, they showed that each $R$-linear Jordan isomorphism of $FI(P,R)$ onto an $R$-algebra $A$ is the near-sum of a homomorphism and an anti-homomorphism, where $P$ is an arbitrary partially ordered set and $R$ is a commutative $2$-torsionfree unital ring. 

Our initial goal was to generalize~\cite[Theorem 3.13]{BFK} to the case of a quasiordered set $P$. This case is technically more complicated, and to deal with it, we used the notion of the finitary incidence ring $FI(\cC)$ of a pocategory, introduced by M.~Khrypchenko in~\cite{Khr10-quasi}. This permitted to us to obtain generalizations of both~\cite[Theorem 2.1]{Akkurts-Barker} and~\cite[Theorem 3.13]{BFK} at the same time. Moreover, the methods elaborated in our paper work for non-necessarily $R$-linear Jordan isomorphisms of incidence rings.

The structure of the article is as follows.

In \cref{sec-prelim} we recall the definitions and the main properties of Jordan homomorphisms and finitary incidence algebras. We recall, for instance, that as abelian group, $FI(\cC)=D(\cC)\oplus FZ(\cC)$, where $D(\cC)$ is the subring of $FI(\cC)$ of all diagonal elements and  $FZ(\cC)$ is the ideal of $FI(\cC)$ consisting of $\af\in FI(\cC)$ with $\af_{xy}=0_{xy}$ for $x=y$.

In \cref{sec-jiso-FI(C)} we study the decomposition of a Jordan isomorphism $\vf:FI(\cC) \to A$ into a near-sum. First we show that  $\vf|_{FZ(\cC)}$ decomposes as the sum of two additive maps $\psi, \0:FI(\cC)\to A$. Then, using several technical lemmas, we prove in \cref{psi-and-0-hom-and-anti-hom} that the maps $\psi$ and $\0$ are a homomorphism and an anti-homomorphism, respectively. The main result of \cref{sec-jiso-FI(C)} is \cref{vf-near-sum-of-tl-psi-and-tl-theta}, which says that  $\vf:FI(\cC) \to A$ the near-sum of two additive maps $\tl\psi,\tl\0:FI(\cC)\to A$ with respect to $D(\cC)$ and $FZ(\cC)$. Moreover, we give necessary and sufficient conditions under which $\tl\psi$ and $\tl\0$ are a homomorphism and an anti-homomorphism, respectively.

\cref{jord-iso-FI(P_R)} is devoted to the decomposition of a Jordan isomorphism $\vf:FI(\cC) \to A$ as the sum of a homomorphism and an anti-homomorphism. In \cref{vf|_D(C)-is-the-sum-of-psi-and-theta} we give necessary and sufficient conditions under which $\vf|_{D(C)}$ admits such a decomposition. Finally, we restrict ourselves to the case $\cC=\cC(P,R)$, where $P$ is a quasiordered set such that $1<|\bar{x}|<\infty$ for all $\bar{x}\in \bar{P}$, $R$ is a commutative ring and $A$ is an $R$-algebra. We prove in \cref{vf=psi+0} that each $R$-linear Jordan isomorphism $\vf:FI(P,R) \to A$ is the sum of a homomorphism and an anti-homomorphism.

\section{Preliminaries}\label{sec-prelim}

\subsection{Jordan homomorphisms}\label{jordan-homo}
Let $R$ and $S$ be associative rings. An additive map $\varphi:R\to S$ is called a \emph{Jordan homomorphism}, if it satisfies
\begin{align}
\vf(r^2)&=\vf(r)^2,\label{vf(r^2)}\\
\vf(rsr)&=\vf(r)\vf(s)\vf(r),\label{vf(rsr)}
\end{align}
for all $r,s\in R$. A bijective Jordan homomorphism is called a \emph{Jordan isomorphism}.

Each homomorphism, as well as an anti-homomorphism, is a Jordan homomorphism. The sum of a homomorphism $\psi:R\to S$ and an anti-homomorphism $\0:R\to
S$ is a Jordan homomorphism, if $\psi(r)\0(s)=\0(s)\psi(r)=0$ for all $r,s\in R$. A more general construction was introduced by D.~Benkovi\v{c} in
\cite{Benkovic05}. Suppose that $R$ can be represented as the direct sum of additive subgroups $R_0\oplus R_1$, where $R_0$ is a subring of $R$ and
$R_1$ is an ideal of $R$. Let $\psi,\0:R\to S$ be additive maps, such that $\psi|_{R_0}=\0|_{R_0}$ and $\psi(r)\0(s)=\0(s)\psi(r)=0$ for all $r,s\in
R_1$. Then the \emph{near-sum} of $\psi$ and $\0$ (with respect to $R_0$ and $R_1$) is the additive map $\vf:R\to S$, which satisfies
$\vf|_{R_0}=\psi|_{R_0}=\0|_{R_0}$ and  $\vf|_{R_1}=\psi|_{R_1}+\0|_{R_1}$. If $\psi$ is a homomorphism and $\0$ is an anti-homomorphism, then one can
show that $\vf$ is a Jordan homomorphism in this case.

Let us now mention some basic facts on Jordan homomorphisms. Applying $\vf$ to $(r+s)^2$ and using \cref{vf(r^2)}, we get
\begin{align}\label{vf(rs+sr)}
 \vf(rs+sr)=\vf(r)\vf(s)+\vf(s)\vf(r).
\end{align}
The substitution of $r$ by $r+t$ in \cref{vf(rsr)} gives
\begin{align*}%\label{vf(rst+tsr)}
 \vf(rst+tsr)=\vf(r)\vf(s)\vf(t)+\vf(t)\vf(s)\vf(r).
\end{align*}
We shall also use the following fact (see Corollary 2 of \cite[Theorem 1]{Jacobson-Rickart50}). If $e$ is an idempotent, such that $er=re$, then
\begin{align}\label{vf(e)vf(r)=vf(r)vf(e)}
 \vf(r)\vf(e)=\vf(e)\vf(r)=\vf(re).
\end{align}
In particular, if $R$ has identity $1$, then $\vf(1)$ is the identity of $\vf(R)$. Another particular case of \cref{vf(e)vf(r)=vf(r)vf(e)}: if
$er=re=0$, then
\begin{align}\label{vf(e)vf(r)=0}
 \vf(e)\vf(r)=\vf(r)\vf(e)=0.
\end{align}

From now on, all rings will be associative and with $1$.

\subsection{Finitary incidence rings}\label{fin-inc-ring}
Recall from \cite{Khr16} (see also \cite{Khr10-quasi} for a slightly stronger definition) that a \emph{pocategory} is a preadditive small category $\cC$
with a partial order $\le$ on the set $\ob\cC$ of its objects. Denote by $I(\cC)$ the set of the formal sums
\begin{align}\label{formal-sum-in-I(C)}
\alpha=\sum_{x\le y}\alpha_{xy}e_{xy},
\end{align}
where $x,y\in\ob\cC$, $\alpha_{xy}\in\mor xy$ and $e_{xy}$ is a symbol. It is an abelian group under the addition coming from the addition of morphisms
in $\cC$. We shall also consider the series $\alpha$ of the form~\cref{formal-sum-in-I(C)}, whose indices run through a subset $X$ of the ordered pairs
$(x,y)$, $x,y\in\ob\cC$, $x\le y$, in which case $\alpha_{xy}$ will be meant to be the zero $0_{xy}$ of $\mor xy$ for $(x,y)\not\in X$.

The sum~\cref{formal-sum-in-I(C)} is called a {\it finitary series}, whenever for any pair of $x,y\in\ob\cC$ with $x<y$ there exists only a finite
number of $u,v\in\ob\cC$, such that $x\le u<v\le y$ and $\alpha_{uv}\ne 0_{uv}$. The set of finitary series, denoted by $FI(\cC)$, is an additive
subgroup of $I(\cC)$, and it is closed under the convolution
\begin{align*}%\label{conv-in-I(C)}
\alpha\beta=\sum_{x\le y}\left(\sum_{x\le z\le y}\alpha_{xz}\beta_{zy}\right)e_{xy},
\end{align*}
where $\alpha,\beta\in FI(\cC)$. Thus, $FI(\cC)$ is a ring, called the {\it finitary incidence ring of $\cC$}. The identity element $\delta$ of
$FI(\cC)$ is the series $\delta=\sum_{x\in\ob{\cC}}\id_xe_{xx}$, where $\id_x$ is the identity morphism from $\End x$.

An element $\af\in FI(\cC)$ will be said to be {\it diagonal}, if $\af_{xy}=0_{xy}$ for $x\ne y$. The  subring of $FI(\cC)$ consisting of the diagonal
elements will be denoted by $D(\cC)$. Clearly, as an abelian group, $FI(\cC)=D(\cC)\oplus FZ(\cC)$, where $FZ(\cC)$ is the ideal of $FI(\cC)$ consisting
of $\af\in FI(\cC)$ with $\af_{xy}=0_{xy}$ for $x=y$. Thus, each $\af\in FI(\cC)$ can be uniquely decomposed as $\af=\af_D+\af_Z$, where $\af_D\in
D(\cC)$ and $\af_Z\in FZ(\cC)$.

Observe that
\begin{align}\label{af_xye_xy-cdot-bt_uve_uv}
\af_{xy}e_{xy}\cdot \bt_{uv}e_{uv}=
 \begin{cases}
  \af_{xy}\bt_{uv}e_{xv}, & \mbox{if $y=u$},\\
  0, & \mbox{otherwise}.
 \end{cases}
\end{align}
In particular, the elements $e_x:=\id_x e_{xx}$, $x\in \ob(\cC)$, are pairwise orthogonal idempotents of $FI(\cC)$, and for any $\af\in FI(\cC)$
\begin{align}\label{e_x-alpha-e_y}
 e_x\af e_y=\begin{cases}
          \af_{xy}e_{xy}, & \mbox{ if }x\le y,\\
          0, & \mbox{ otherwise}.
         \end{cases}
\end{align}
Consequently, for all $\af,\bt\in FI(\cC)$
\begin{align*}%\label{af=bt<=>e_x-af-e_y+e_y-af-e_x=e_x-bt-e_y+e_y-bt-e_x}
 \af=\bt&\iff \forall x\le y:\ e_x\af e_y=e_x\bt e_y\notag\\
 &\iff \begin{cases}
        \forall x<y:& e_x\af e_y+e_y\af e_x=e_x\bt e_y+e_y\bt e_x,\\
        \forall x:& e_x\af e_x=e_x\bt e_x.
       \end{cases}
\end{align*}

Given $X\subseteq \ob{\cC}$, we shall use the notation $e_X$ for the diagonal idempotent $\sum_{x\in X}\id_x e_{xx}$. In particular, $e_x=e_{\{x\}}$.
Note that $e_Xe_Y=e_{X\cap Y}$, so $e_xe_X=e_x$ for $x\in X$, and $e_xe_X=0$ otherwise.

Let $(P,\preceq)$ be a preordered set and $R$ a ring. We recall the construction of the pocategory $\cC(P,R)$, introduced in \cite{Khr16}.

Denote by $\sim$ the natural equivalence relation on $P$, namely, $x\sim y\Leftrightarrow x\preceq y\preceq x$, and by $\bar P$ the quotient set $P/\sim$. Define $\ob\cC(P,R)$ to be $\bar P$ with the induced partial order $\leq$. For any pair $\bar x,\bar y \in \ob\cC(P,R)$, let $\Mor {\bar{x},\bar{y}}=RFM_{\bar x\times \bar y}(R)$, where $RFM_{\bar I\times \bar J}(R)$ denotes the additive group of row-finite matrices over $R$, whose rows are indexed by the elements of $I$ and columns by the elements of $J$. The composition of morphisms in $\cC(P,R)$ is the matrix multiplication, which is defined by the row-finiteness condition.

The \emph{finitary incidence ring of $P$ over $R$}, denoted by $FI(P,R)$, is by definition $FI(\cC(P,R))$. Furthermore, $D(P,R):=D(\cC(P,R))$ and $FZ(P,R):=FZ(\cC(P,R))$.

\section{Jordan isomorphisms of $FI(\cC)$}\label{sec-jiso-FI(C)}

Let $A$ be a ring, $\cC$ an arbitrary pocategory and $\vf:FI(\cC)\to A$ a Jordan isomorphism. We first adapt the ideas from \cite{Akkurts-Barker,BFK} to decompose $\vf|_{FZ(\cC)}$ as the sum of two additive maps $\psi,\0:FZ(\cC)\to A$.

\subsection{Decomposition of $\vf|_{FZ(\cC)}$}
The following two lemmas are straightforward generalizations of \cite[Lemmas 3.1--3.2]{BFK}, so we omit their proofs.
\begin{lem}
	Let $\vf:FI(\cC)\to A$ be a Jordan homomorphism. Then for any $\af\in FI(\cC)$ one has
	\begin{align}
	\forall x<y:\
\vf(\af_{xy}e_{xy})&=\vf(e_x)\vf(\af)\vf(e_y)+\vf(e_y)\vf(\af)\vf(e_x),\label{vf(af_xye_xy)=vf(e_x)vf(af)vf(e_y)+vf(e_y)vf(af)vf(e_x)}\\
	\forall x:\ \vf(\af_{xx}e_{xx})&=\vf(e_x)\vf(\af)\vf(e_x).\label{vf(af_xxe_x)=vf(e_x)vf(af)vf(e_x)}
	\end{align}
\end{lem}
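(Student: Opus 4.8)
The plan is to derive both identities directly from the Jordan axioms combined with the Peirce-type formula \cref{e_x-alpha-e_y}, which records exactly which ``cells'' $e_u\af e_v$ are nonzero. The only tool from \cref{sec-prelim} that I need beyond \cref{vf(rsr)} is its polarization: replacing $r$ by $r+t$ in \cref{vf(rsr)} and using additivity of $\vf$ yields $\vf(rst+tsr)=\vf(r)\vf(s)\vf(t)+\vf(t)\vf(s)\vf(r)$ for all $r,s,t\in FI(\cC)$. No torsion hypothesis on $A$ is used anywhere, and the argument is a verbatim generalization of \cite[Lemmas 3.1--3.2]{BFK} with $\{1,\dots,n\}$ replaced by $\ob\cC$.

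For the diagonal identity \cref{vf(af_xxe_x)=vf(e_x)vf(af)vf(e_x)}, I would first note that, by \cref{e_x-alpha-e_y} applied with $x\le x$, one has $e_x\af e_x=\af_{xx}e_{xx}$. Since $e_x$ is an idempotent of $FI(\cC)$, \cref{vf(rsr)} with $r=e_x$ and $s=\af$ gives $\vf(\af_{xx}e_{xx})=\vf(e_x\af e_x)=\vf(e_x)\vf(\af)\vf(e_x)$, which is precisely the claim.

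For the off-diagonal identity \cref{vf(af_xye_xy)=vf(e_x)vf(af)vf(e_y)+vf(e_y)vf(af)vf(e_x)}, fix $x<y$. By \cref{e_x-alpha-e_y} we get $e_x\af e_y=\af_{xy}e_{xy}$, while $e_y\af e_x=0$ because $y\not\le x$ (as $\le$ is a partial order on $\ob\cC$ and $x\ne y$). Hence $\af_{xy}e_{xy}=e_x\af e_y+e_y\af e_x$, and applying the polarized identity with $r=e_x$, $s=\af$, $t=e_y$ yields $\vf(\af_{xy}e_{xy})=\vf(e_x\af e_y+e_y\af e_x)=\vf(e_x)\vf(\af)\vf(e_y)+\vf(e_y)\vf(\af)\vf(e_x)$, as required.

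I do not expect a genuine obstacle here; consistent with the authors' remark that the lemma is ``straightforward'', the proof is a formal manipulation. The only point that requires (minimal) care is the bookkeeping of vanishing Peirce components $e_u\af e_v$, which is governed entirely by the partial order on $\ob\cC$ via \cref{e_x-alpha-e_y}; in particular one must use that $x<y$ forces $e_y\af e_x=0$, so that the single-cell element $\af_{xy}e_{xy}$ can be written as the symmetric combination $e_x\af e_y+e_y\af e_x$ to which the polarized Jordan identity applies.
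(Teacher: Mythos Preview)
Your proof is correct and is exactly the straightforward argument the paper has in mind when it omits the proof as a direct generalization of \cite[Lemma~3.1]{BFK}: apply \cref{vf(rsr)} for the diagonal case and its polarization $\vf(rst+tsr)=\vf(r)\vf(s)\vf(t)+\vf(t)\vf(s)\vf(r)$ for the off-diagonal case, using \cref{e_x-alpha-e_y} to identify the Peirce components and the fact that $x<y$ forces $e_y\af e_x=0$.
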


\begin{lem}\label{a=b-in-A}
	Let $\vf:FI(\cC)\to A$ be a Jordan isomorphism. Then for all $a,b\in A$
	$$
	a=b\iff\begin{cases}
	\forall x<y:& \vf(e_x)a\vf(e_y)+\vf(e_y)a\vf(e_x)=\vf(e_x)b\vf(e_y)+\vf(e_y)b\vf(e_x),\\
	\forall x:& \vf(e_x)a\vf(e_x)=\vf(e_x)b\vf(e_x).
	\end{cases}
	$$
\end{lem}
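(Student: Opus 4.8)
The plan is to pass from the given ring identities in $A$ back to identities in $FI(\cC)$ using the bijectivity of $\vf$, and then invoke the characterization of equality in $FI(\cC)$ recorded just before this lemma. First I would note that the forward implication ($a=b$ obviously implies the right-hand system) is trivial. For the converse, write $a=\vf(\af)$ and $b=\vf(\bt)$ for the unique $\af,\bt\in FI(\cC)$ obtained from surjectivity of $\vf$. The key observation is that $e_x$ is an idempotent commuting with itself and that $e_xe_y=0$ for $x\ne y$; combined with the previous lemma applied to the Jordan \emph{homomorphism} $\vf$, formulas \cref{vf(af_xye_xy)=vf(e_x)vf(af)vf(e_y)+vf(e_y)vf(af)vf(e_x)} and \cref{vf(af_xxe_x)=vf(e_x)vf(af)vf(e_x)} give
\begin{align*}
\vf(e_x)\vf(\af)\vf(e_y)+\vf(e_y)\vf(\af)\vf(e_x)&=\vf(\af_{xy}e_{xy})\quad(x<y),\\
\vf(e_x)\vf(\af)\vf(e_x)&=\vf(\af_{xx}e_{xx})\quad(x),
\end{align*}
and similarly for $\bt$.

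Hence the hypothesized system says exactly that $\vf(\af_{xy}e_{xy})=\vf(\bt_{xy}e_{xy})$ for all $x<y$ and $\vf(\af_{xx}e_{xx})=\vf(\bt_{xx}e_{xx})$ for all $x$. Since $\vf$ is injective, this yields $\af_{xy}e_{xy}=\bt_{xy}e_{xy}$ for all $x\le y$, i.e.\ $e_x\af e_y=e_x\bt e_y$ for all $x\le y$ by \cref{e_x-alpha-e_y}. By the displayed equivalence in \cref{sec-prelim} characterizing equality of elements of $FI(\cC)$ (the one stating $\af=\bt\iff\forall x\le y\colon e_x\af e_y=e_x\bt e_y$), we conclude $\af=\bt$, and therefore $a=\vf(\af)=\vf(\bt)=b$.

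There is essentially no hard step here: the whole argument is a translation between the two sides of $\vf$ using its bijectivity, the idempotent relations among the $e_x$, and the already-established formulas of the preceding lemma. The only point requiring a little care is making sure the preceding lemma is applicable — it is stated for Jordan homomorphisms, and a Jordan isomorphism is in particular a Jordan homomorphism, so \cref{vf(af_xye_xy)=vf(e_x)vf(af)vf(e_y)+vf(e_y)vf(af)vf(e_x)} and \cref{vf(af_xxe_x)=vf(e_x)vf(af)vf(e_x)} hold verbatim — and in remembering that surjectivity is what lets us write arbitrary $a,b\in A$ in the form $\vf(\af),\vf(\bt)$ in the first place.
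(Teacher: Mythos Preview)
Your argument is correct and is exactly the intended one: the paper omits the proof, noting that it is a straightforward generalization of the corresponding result in \cite{BFK}, and your translation via bijectivity of $\vf$ together with the previous lemma and the characterization of equality in $FI(\cC)$ is precisely that generalization.
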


Observe also from \cref{vf(af_xye_xy)=vf(e_x)vf(af)vf(e_y)+vf(e_y)vf(af)vf(e_x),vf(e)vf(r)=0} that
\begin{align}
	\vf(e_x)\vf(\af)\vf(e_y)&=\vf(e_x)\vf(\af_{xy}e_{xy})\vf(e_y),\label{vf(e_x)vf(af)vf(e_y)=vf(e_x)vf(af_xye_xy)vf(e_y)}\\
	\vf(e_y)\vf(\af)\vf(e_x)&=\vf(e_y)\vf(\af_{xy}e_{xy})\vf(e_x),\label{vf(e_y)vf(af)vf(e_x)=vf(e_y)vf(af_xye_xy)vf(e_x)}
\end{align}
for all $x<y$.

The next two lemmas will lead us to the definition of $\psi$ and $\0$.
\begin{lem}\label{terms-of-psi-and-0}
	Given a Jordan isomorphism $\vf:FI(\cC)\to A$, $\af\in FI(\cC)$ and $x<y$, there exists a (unique) pair of $\af'_{xy},\af''_{xy}\in\mor xy$, such that
	\begin{align}
		\vf(e_x)\vf(\af)\vf(e_y)=\vf(\af'_{xy}e_{xy}),\label{defn-of-af'_xy}\\
		\vf(e_y)\vf(\af)\vf(e_x)=\vf(\af''_{xy}e_{xy}).\label{defn-of-af''_xy}		
	\end{align}
\end{lem}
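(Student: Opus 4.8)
The plan is to exploit the bijectivity of $\vf$ together with the equality test of \cref{a=b-in-A}. Fix $\af\in FI(\cC)$ and $x<y$, and put $a:=\vf(e_x)\vf(\af)\vf(e_y)$. Since $\vf$ is bijective, $a=\vf(\bt)$ for a unique $\bt\in FI(\cC)$, and I would take $\af'_{xy}$ to be $\bt_{xy}$, the $(x,y)$-component of $\bt$. Once we know that $a=\vf(\bt_{xy}e_{xy})$, equality \cref{defn-of-af'_xy} holds; moreover $\af'_{xy}$ is then forced, since $\vf$ is injective and therefore $\af'_{xy}e_{xy}$, hence $\af'_{xy}$, is determined by $a$.

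To prove $a=\vf(\bt_{xy}e_{xy})$ I would apply \cref{a=b-in-A} to $a$ and $b:=\vf(\bt_{xy}e_{xy})$, verifying its two conditions by computing, for each $u<v$, the sums $\vf(e_u)a\vf(e_v)+\vf(e_v)a\vf(e_u)$ and $\vf(e_u)b\vf(e_v)+\vf(e_v)b\vf(e_u)$, and, for each $u$, the products $\vf(e_u)a\vf(e_u)$ and $\vf(e_u)b\vf(e_u)$. For the terms coming from $a$, note that $e_ue_x=e_{\{u\}\cap\{x\}}$, so \cref{vf(e)vf(r)=vf(r)vf(e)} gives $\vf(e_u)\vf(e_x)=\vf(e_ue_x)$; hence $\vf(e_u)a\vf(e_v)=\vf(e_ue_x)\vf(\af)\vf(e_ye_v)$, which equals $a$ if $(u,v)=(x,y)$ and $0$ otherwise, while $\vf(e_v)a\vf(e_u)=\vf(e_ve_x)\vf(\af)\vf(e_ye_u)=0$ for every $u<v$ (as $x<y$ rules out $v=x$, $u=y$), and likewise $\vf(e_u)a\vf(e_u)=0$. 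For the terms coming from $b$, \cref{vf(e_x)vf(af)vf(e_y)=vf(e_x)vf(af_xye_xy)vf(e_y),vf(e_y)vf(af)vf(e_x)=vf(e_y)vf(af_xye_xy)vf(e_x)} and \cref{vf(af_xxe_x)=vf(e_x)vf(af)vf(e_x)} collapse $\vf(e_u)b\vf(e_v)$, $\vf(e_v)b\vf(e_u)$ and $\vf(e_u)b\vf(e_u)$ to the single surviving term $\vf(e_x)\vf(\bt_{xy}e_{xy})\vf(e_y)=\vf(e_x)\vf(\bt)\vf(e_y)=\vf(e_x)a\vf(e_y)=a$, attained only when $(u,v)=(x,y)$, all other terms being $0$. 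The two sides agree, so \cref{a=b-in-A} yields $a=b$.

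Part \cref{defn-of-af''_xy} follows by running the same argument with $a$ replaced by $\vf(e_y)\vf(\af)\vf(e_x)$ and with $\af''_{xy}$ the $(x,y)$-component of its $\vf$-preimage; the only change is the bookkeeping of which off-diagonal term survives, and $x<y$ again singles out exactly the $(u,v)=(x,y)$ one. I do not expect a genuine obstacle here: the statement is essentially a bookkeeping consequence of \cref{a=b-in-A} and the orthogonality of the idempotents $e_x$. The one point requiring care is the vanishing of the \emph{transposed} contributions $\vf(e_v)a\vf(e_u)$ and $\vf(e_v)b\vf(e_u)$ for $u<v$, which is exactly where the hypothesis $x<y$, rather than merely $x\ne y$, is used.
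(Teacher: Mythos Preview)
Your argument is correct, and it begins exactly as the paper's does: set $a=\vf(e_x)\vf(\af)\vf(e_y)=\vf(\bt)$ via bijectivity and take $\af'_{xy}=\bt_{xy}$. The difference is only in how you verify $\vf(\bt)=\vf(\bt_{xy}e_{xy})$. You invoke \cref{a=b-in-A} and run through the full case analysis over all $u<v$ and all $u$, whereas the paper bypasses this entirely by applying \cref{vf(af_xye_xy)=vf(e_x)vf(af)vf(e_y)+vf(e_y)vf(af)vf(e_x)} \emph{to $\bt$ itself}:
\[
\vf(\bt_{xy}e_{xy})=\vf(e_x)\vf(\bt)\vf(e_y)+\vf(e_y)\vf(\bt)\vf(e_x),
\]
then substitutes $\vf(\bt)=\vf(e_x)\vf(\af)\vf(e_y)$ and uses $\vf(e_x)^2=\vf(e_x)$ together with $\vf(e_y)\vf(e_x)=0$ to collapse the right-hand side to $\vf(e_x)\vf(\af)\vf(e_y)=\vf(\bt)$ in one line. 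Your route is sound and self-contained, but the paper's is a four-line computation that avoids \cref{a=b-in-A} altogether; the projection identity you cite for the $b$-terms already encodes everything needed.
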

\begin{proof}
	We construct $\af'_{xy}$, the construction of $\af''_{xy}$ is similar. Since $\vf$ is bijective, there exists a unique $\bt\in FI(\cC)$, such that
$\vf(e_x)\vf(\af)\vf(e_y)=\vf(\bt)$. Apply \cref{vf(af_xye_xy)=vf(e_x)vf(af)vf(e_y)+vf(e_y)vf(af)vf(e_x)} to $\bt$ and use \cref{vf(e)vf(r)=0,vf(r^2)}
to obtain
	\begin{align*}
		\vf(\bt_{xy}e_{xy})&=\vf(e_x)\vf(\bt)\vf(e_y)+\vf(e_y)\vf(\bt)\vf(e_x)\\
		&=\vf(e_x)\vf(e_x)\vf(\af)\vf(e_y)\vf(e_y)+\vf(e_y)\vf(e_x)\vf(\af)\vf(e_y)\vf(e_x)\\
		&=\vf(e_x)\vf(\af)\vf(e_y)\\
		&=\vf(\bt).
	\end{align*}
Now take $\af'_{xy}=\bt_{xy}$.
\end{proof}

\begin{lem}\label{af'-and-af''-are-finitary-series}
	Let $\vf:FI(\cC)\to A$ be a Jordan isomorphism, $\af\in FZ(\cC)$ and $\{\af'_{xy}\}_{x<y},\{\af''_{xy}\}_{x<y}\subseteq\Mor{\cC}$ defined by
\cref{defn-of-af'_xy,defn-of-af''_xy}. Then
	\begin{align}
		\af'&=\sum_{x<y}\af'_{xy}e_{xy},\label{defn-of-af'}\\
		\af''&=\sum_{x<y}\af''_{xy}e_{xy}\label{defn-of-af''}
	\end{align}
	are finitary series, such that
\begin{align}\label{af=af'+af''}
	\af=\af'+\af''.
\end{align}	
 Moreover, the maps $\af\mapsto\af'$ and $\af\mapsto\af''$ are additive.
\end{lem}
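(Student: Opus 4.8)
The plan is to reduce all three claims to coefficient-wise identities in $FI(\cC)$ and then exploit that $\vf$ is additive and injective. First I would apply \cref{vf(af_xye_xy)=vf(e_x)vf(af)vf(e_y)+vf(e_y)vf(af)vf(e_x)} to $\af$ itself: for each $x<y$ it gives $\vf(\af_{xy}e_{xy})=\vf(e_x)\vf(\af)\vf(e_y)+\vf(e_y)\vf(\af)\vf(e_x)$, and by the defining equations \cref{defn-of-af'_xy,defn-of-af''_xy} the right-hand side equals $\vf(\af'_{xy}e_{xy})+\vf(\af''_{xy}e_{xy})=\vf(\af'_{xy}e_{xy}+\af''_{xy}e_{xy})$. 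Since $\vf$ is injective, $\af_{xy}e_{xy}=\af'_{xy}e_{xy}+\af''_{xy}e_{xy}$, hence $\af_{xy}=\af'_{xy}+\af''_{xy}$ in $\mor xy$ for all $x<y$.

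Next I would prove that $\af'$ and $\af''$ are finitary series, i.e. genuine elements of $FI(\cC)$. The key observation is that the support of $\af'$ is contained in that of $\af$: if $\af_{uv}=0_{uv}$ for some $u<v$, then $\af_{uv}e_{uv}=0$, so \cref{vf(e_x)vf(af)vf(e_y)=vf(e_x)vf(af_xye_xy)vf(e_y)} yields $\vf(e_u)\vf(\af)\vf(e_v)=\vf(e_u)\vf(0)\vf(e_v)=0$, whence $\vf(\af'_{uv}e_{uv})=0$ and, by injectivity, $\af'_{uv}=0_{uv}$. Contrapositively, $\af'_{uv}\ne 0_{uv}$ forces $\af_{uv}\ne 0_{uv}$; the analogous statement for $\af''$ follows the same way from \cref{vf(e_y)vf(af)vf(e_x)=vf(e_y)vf(af_xye_xy)vf(e_x)}. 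Therefore, for any fixed $x<y$, the set of pairs $(u,v)$ with $x\le u<v\le y$ and $\af'_{uv}\ne 0_{uv}$ (resp. $\af''_{uv}\ne 0_{uv}$) is contained in the analogous set for $\af$, which is finite because $\af\in FI(\cC)$. Hence \cref{defn-of-af',defn-of-af''} do define finitary series; and since $(\af')_{zz}=(\af'')_{zz}=0_{zz}$ for all $z$ while $\af\in FZ(\cC)$ has $\af=\sum_{x<y}\af_{xy}e_{xy}$, the coefficient identity from the first step upgrades to the equality $\af=\af'+\af''$ in $FI(\cC)$.

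Finally, for additivity I would fix $\af,\bt\in FZ(\cC)$ and $x<y$ and use that $\vf$ is additive: $\vf(e_x)\vf(\af+\bt)\vf(e_y)=\vf(e_x)\vf(\af)\vf(e_y)+\vf(e_x)\vf(\bt)\vf(e_y)=\vf(\af'_{xy}e_{xy})+\vf(\bt'_{xy}e_{xy})=\vf((\af'_{xy}+\bt'_{xy})e_{xy})$, so by the uniqueness clause of \cref{terms-of-psi-and-0} we get $(\af+\bt)'_{xy}=\af'_{xy}+\bt'_{xy}$; summing over $x<y$ yields $(\af+\bt)'=\af'+\bt'$, and the argument for $\af\mapsto\af''$ is identical. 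Altogether this is essentially routine bookkeeping with the additivity and injectivity of $\vf$; the only step calling for a genuine idea is the finitariness of $\af'$ and $\af''$, where the point is that pushing the triviality of $\af_{uv}e_{uv}$ through \cref{vf(e_x)vf(af)vf(e_y)=vf(e_x)vf(af_xye_xy)vf(e_y)} and the injectivity of $\vf$ forces the triviality of $\af'_{uv}$, so that the new series have support no larger than that of $\af$.
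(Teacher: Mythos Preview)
Your argument is correct and follows essentially the same route as the paper: you use \cref{vf(af_xye_xy)=vf(e_x)vf(af)vf(e_y)+vf(e_y)vf(af)vf(e_x)} together with injectivity of $\vf$ to get the coefficient identity $\af_{xy}=\af'_{xy}+\af''_{xy}$, then \cref{vf(e_x)vf(af)vf(e_y)=vf(e_x)vf(af_xye_xy)vf(e_y)} to show the support of $\af'$ is contained in that of $\af$, and finally additivity of $\vf$ for the additivity of $\af\mapsto\af'$. The only cosmetic difference is that the paper, having first established \cref{af=af'+af''}, deduces the statements about $\af''$ from those about $\af'$ rather than repeating the argument.
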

\begin{proof}
	We first prove \cref{af=af'+af''} and thus reduce all the assertions about $\af''$ to the corresponding assertions about $\af'$. Indeed,
\cref{af=af'+af''} easily follows from \cref{vf(af_xye_xy)=vf(e_x)vf(af)vf(e_y)+vf(e_y)vf(af)vf(e_x),defn-of-af'_xy,defn-of-af''_xy} and bijectivity of
$\vf$, since
	\begin{align*}
		\vf(\af_{xy}e_{xy})&=\vf(e_x)\vf(\af)\vf(e_y)+\vf(e_y)\vf(\af)\vf(e_x)\\
		&=\vf(\af'_{xy}e_{xy})+\vf(\af''_{xy}e_{xy})\\
		&=\vf((\af'_{xy}+\af''_{xy})e_{xy}).
	\end{align*}
	
	Now suppose that $\af'_{uv}\ne 0_{uv}$ for an infinite number of ordered pairs $x\le u<v\le y$. Then $\vf(e_u)\vf(\af)\vf(e_v)\ne 0$ by
\cref{defn-of-af'_xy}. It follows from \cref{vf(e_x)vf(af)vf(e_y)=vf(e_x)vf(af_xye_xy)vf(e_y)} that $\vf(\af_{uv}e_{uv})\ne 0$. Hence, $\af_{uv}\ne
0_{uv}$ for $x\le u<v\le y$ contradicting the fact that $\af\in FI(\cC)$. Thus, $\af'\in FI(\cC)$, and additivity of $\af\mapsto\af'$ is explained by
additivity of $\vf$ and distributivity of multiplication in $A$.
\end{proof}

Thus, with any Jordan isomorphism $\vf:FI(\cC)\to A$ we may associate $\psi,\0:FZ(\cC)\to A$ given by
\begin{align}
	\psi(\af)&=\vf(\af'),\label{defn-of-psi}\\
	\0(\af)&=\vf(\af''),\label{defn-of-0}
\end{align}
where $\af\in FZ(\cC)$ and $\af',\af''$ are defined by means of \cref{defn-of-af'_xy,defn-of-af''_xy,defn-of-af',defn-of-af''}. By
\cref{af'-and-af''-are-finitary-series} the maps $\psi$ and $\0$ are well defined and additive.

\begin{prop}\label{vf|_FZ-is-sum}
	Let $\vf:FI(\cC)\to A$ be a Jordan isomorphism. Then
	\begin{align*}
		\vf|_{FZ(\cC)}=\psi+\0.
	\end{align*}
\end{prop}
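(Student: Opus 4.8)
The plan is to read the identity off directly from the pieces already assembled, so this will be a short argument rather than a new one. Fix $\af\in FZ(\cC)$ and let $\af',\af''$ be the series associated to $\af$ through \cref{defn-of-af'_xy,defn-of-af''_xy,defn-of-af',defn-of-af''}; by \cref{af'-and-af''-are-finitary-series} these are finitary series, and since they are supported off the diagonal they lie in $FZ(\cC)\sst FI(\cC)$, so $\vf(\af')$ and $\vf(\af'')$ make sense. By the very definitions \cref{defn-of-psi,defn-of-0} we have $\psi(\af)=\vf(\af')$ and $\0(\af)=\vf(\af'')$, hence $\psi(\af)+\0(\af)=\vf(\af')+\vf(\af'')$.

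Now I would invoke additivity of $\vf$ to rewrite the right-hand side as $\vf(\af'+\af'')$, and then use \cref{af=af'+af''}, which states precisely that $\af'+\af''=\af$, to conclude $\psi(\af)+\0(\af)=\vf(\af)$. Since $\af\in FZ(\cC)$ was arbitrary, this is exactly the claimed equality $\vf|_{FZ(\cC)}=\psi+\0$ of maps $FZ(\cC)\to A$; note that $\psi$ and $\0$ are already known to be well defined and additive, so no further bookkeeping is needed.

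There is no genuine obstacle at this stage: all the work was done in \cref{af'-and-af''-are-finitary-series}, whose proof derived $\af=\af'+\af''$ coefficientwise from \cref{vf(af_xye_xy)=vf(e_x)vf(af)vf(e_y)+vf(e_y)vf(af)vf(e_x)} together with injectivity of $\vf$. The only point I would take care to state explicitly is that $\af'$ and $\af''$ belong to $FZ(\cC)$, so that applying $\vf$ to them is legitimate and the equation $\psi(\af)+\0(\af)=\vf(\af'+\af'')=\vf(\af)$ is a chain of equalities inside $A$ with no hidden domain issues.
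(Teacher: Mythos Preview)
Your proposal is correct and follows essentially the same approach as the paper: both arguments amount to applying the definitions $\psi(\af)=\vf(\af')$, $\0(\af)=\vf(\af'')$, additivity of $\vf$, and the identity $\af=\af'+\af''$ from \cref{af'-and-af''-are-finitary-series}. The paper's proof is simply a terser version of what you wrote.
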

\begin{proof}
If $\af\in FZ(\cC)$, then $\psi(\af)=\vf(\af')$ and $\0(\af)=\vf(\af'')$ by \cref{defn-of-psi,defn-of-0}, so $\vf(\af)=\psi(\af)+\0(\af)$ thanks to
\cref{af=af'+af''}.
\end{proof}

\subsection{Properties of $\psi$ and $\0$}

In this subsection we prove that the maps $\psi$ and $\0$ are in fact a homomorphism and an anti-homomorphism.
We first show that they satisfy the properties analogous to the ones given in \cite[Propositions 3.5, 3.12]{BFK}, as the next lemma shows.

\begin{lem}\label{vf(e_x)psi(af)vf(e_y)-etc}
	Let $\vf:FI(\cC)\to A$ be a Jordan isomorphism, $\psi,\0:FZ(\cC)\to A$ the associated maps given by \cref{defn-of-psi,defn-of-0} and $\af\in
FZ(\cC)$. Then for all $x<y$
	\begin{align}
		\vf(e_x)\psi(\af)\vf(e_y)&=\vf(e_x)\vf(\af)\vf(e_y),\label{vf(e_x)psi(af)vf(e_y)=vf(e_x)vf(af)vf(e_y)}\\
		\vf(e_y)\0(\af)\vf(e_x)&=\vf(e_y)\vf(\af)\vf(e_x)\notag%\label{vf(e_y)0(af)vf(e_x)=vf(e_y)vf(af)vf(e_x)}
	\end{align}
	and
	\begin{align}\label{vf(e_y)psi(af)vf(e_x)=vf(e_x)0(af)vf(e_y)=vf(e_x)psi(af)vf(e_x)=vf(e_x)0(af)vf(e_x)=0}
		\vf(e_y)\psi(\af)\vf(e_x)=\vf(e_x)\psi(\af)\vf(e_x)=\vf(e_x)\0(\af)\vf(e_y)=\vf(e_x)\0(\af)\vf(e_x)=0.
	\end{align}
\end{lem}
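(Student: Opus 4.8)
The proof is a direct computation using the definitions of $\psi$ and $\0$ from \cref{defn-of-psi,defn-of-0}, together with the idempotent relations for the $\vf(e_x)$. The key point is that $\psi(\af)=\vf(\af')$ where $\af'=\sum_{u<v}\af'_{uv}e_{uv}$ is supported off the diagonal, so after sandwiching by two $\vf(e_x)$'s or by $\vf(e_y),\vf(e_x)$ in the ``wrong'' order, everything collapses. Recall from \cref{vf(e)vf(r)=0,vf(r^2)} that $\vf(e_x)^2=\vf(e_x)$ and $\vf(e_x)\vf(e_y)=0$ for $x\ne y$, since $e_x,e_y$ are orthogonal idempotents.

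First I would establish \cref{vf(e_x)psi(af)vf(e_y)=vf(e_x)vf(af)vf(e_y)}. Write $\af'$ as a finitary series via \cref{defn-of-af'}; applying \cref{vf(af_xye_xy)=vf(e_x)vf(af)vf(e_y)+vf(e_y)vf(af)vf(e_x),vf(af_xxe_x)=vf(e_x)vf(af)vf(e_x)} to $\af'$ and using that $\af'$ has zero diagonal, one gets that $\vf(e_x)\vf(\af')\vf(e_y)=\vf(e_x)\vf(\af'_{xy}e_{xy})\vf(e_y)$ after inserting idempotents. But by the construction in \cref{terms-of-psi-and-0} — more precisely, since $\af'$ and $\af$ have, by \cref{defn-of-af'_xy}, the property that $\vf(e_x)\vf(\af')\vf(e_y)=\vf(\bt)$ for the same $\bt$ that computes $\vf(e_x)\vf(\af)\vf(e_y)$ — one concludes $\vf(e_x)\vf(\af')\vf(e_y)=\vf(e_x)\vf(\af)\vf(e_y)$. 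Concretely: apply \cref{vf(e_x)vf(af)vf(e_y)=vf(e_x)vf(af_xye_xy)vf(e_y)} to $\af'$ to get $\vf(e_x)\vf(\af')\vf(e_y)=\vf(e_x)\vf(\af'_{xy}e_{xy})\vf(e_y)$, and apply the same identity to $\af$ together with \cref{defn-of-af'_xy} to identify $\af'_{xy}e_{xy}$; since $\psi(\af)=\vf(\af')$ by \cref{defn-of-psi}, this gives the first displayed equality. The statement $\vf(e_y)\0(\af)\vf(e_x)=\vf(e_y)\vf(\af)\vf(e_x)$ is proved symmetrically, using \cref{defn-of-af''_xy,defn-of-0} and \cref{vf(e_y)vf(af)vf(e_x)=vf(e_y)vf(af_xye_xy)vf(e_x)}.

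For \cref{vf(e_y)psi(af)vf(e_x)=vf(e_x)0(af)vf(e_y)=vf(e_x)psi(af)vf(e_x)=vf(e_x)0(af)vf(e_x)=0} I would again use $\psi(\af)=\vf(\af')$ with $\af'=\sum_{u<v}\af'_{uv}e_{uv}$ supported strictly above the diagonal. Each of the four sandwiches $\vf(e_y)\vf(\af')\vf(e_x)$, $\vf(e_x)\vf(\af'')\vf(e_y)$, $\vf(e_x)\vf(\af')\vf(e_x)$, $\vf(e_x)\vf(\af'')\vf(e_x)$ has the form $\vf(e_a)\vf(\gm)\vf(e_b)$ where $\gm$ is a finitary series with $\gm_{uv}=0$ unless $u<v$ (for $\af'$) respectively $u<v$ (for $\af''$, noting $\af''_{uv}e_{uv}$ is also indexed by $u<v$). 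Apply \cref{vf(af_xye_xy)=vf(e_x)vf(af)vf(e_y)+vf(e_y)vf(af)vf(e_x),vf(af_xxe_x)=vf(e_x)vf(af)vf(e_x)} to such a $\gm$ and insert the orthogonality/idempotency relations for $\vf(e_x)$: when both flanking idempotents are the same, or when they appear in the order $\vf(e_y)\cdots\vf(e_x)$ while the series is supported on $u<v$, the resulting expression $\vf(\gm_{ab}e_{ab})$ forces $a<b$ or $a>b$ incompatibly with the support, hence equals $\vf(0)=0$. I would just spell this out once for $\vf(e_y)\psi(\af)\vf(e_x)$ and note the other three are identical in spirit.

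The main obstacle, such as it is, is purely bookkeeping: one must be careful that the identities \cref{vf(e_x)vf(af)vf(e_y)=vf(e_x)vf(af_xye_xy)vf(e_y),vf(e_y)vf(af)vf(e_x)=vf(e_y)vf(af_xye_xy)vf(e_x)} were derived for arbitrary $\af\in FI(\cC)$ and therefore apply equally to the series $\af',\af''\in FZ(\cC)$, and that the ``$x<y$'' in those identities can be freely replaced by any pair of comparable objects when we need to test the vanishing statements. There is no conceptual difficulty — the whole lemma is the finitary-pocategory analogue of \cite[Propositions 3.5, 3.12]{BFK}, and the argument transfers verbatim once the index bookkeeping is set up.
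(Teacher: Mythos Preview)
Your argument for \cref{vf(e_x)psi(af)vf(e_y)=vf(e_x)vf(af)vf(e_y)} and for the diagonal vanishing $\vf(e_x)\psi(\af)\vf(e_x)=0$ is correct and coincides with the paper's: apply \cref{vf(e_x)vf(af)vf(e_y)=vf(e_x)vf(af_xye_xy)vf(e_y)} (resp.\ \cref{vf(af_xxe_x)=vf(e_x)vf(af)vf(e_x)}) to $\af'$ in place of $\af$ and use the definition \cref{defn-of-af'_xy} (resp.\ the fact that $\af'\in FZ(\cC)$ so $\af'_{xx}=0$).

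There is, however, a real gap in your treatment of $\vf(e_y)\psi(\af)\vf(e_x)$ (and symmetrically of $\vf(e_x)\0(\af)\vf(e_y)$). Your ``support incompatibility'' step tacitly assumes that sandwiching by $\vf(e_y)\cdots\vf(e_x)$ picks out the $(y,x)$-entry of $\af'$, which would then vanish since $y\not<x$. But $\vf$ is only a Jordan isomorphism, not a homomorphism: the available identity \cref{vf(e_y)vf(af)vf(e_x)=vf(e_y)vf(af_xye_xy)vf(e_x)} applied to $\af'$ yields
\[
\vf(e_y)\vf(\af')\vf(e_x)=\vf(e_y)\vf(\af'_{xy}e_{xy})\vf(e_x),
\]
which involves $\af'_{xy}$, not $\af'_{yx}$, and $\af'_{xy}$ is in general nonzero. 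None of the cited identities produces an expression $\vf(\gm_{ab}e_{ab})$ with $(a,b)$ outside the support of $\af'$. The paper closes this by substituting the \emph{defining} equality $\vf(\af'_{xy}e_{xy})=\vf(e_x)\vf(\af)\vf(e_y)$ from \cref{defn-of-af'_xy} and then using orthogonality of $\vf(e_x)$ and $\vf(e_y)$:
\[
\vf(e_y)\vf(\af'_{xy}e_{xy})\vf(e_x)=\vf(e_y)\vf(e_x)\vf(\af)\vf(e_y)\vf(e_x)=0.
\]
The case $\vf(e_x)\0(\af)\vf(e_y)=0$ is handled the same way via \cref{defn-of-af''_xy}. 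Once you make this substitution, your outline goes through exactly as in the paper.
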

\begin{proof}
    By \cref{vf(r^2),vf(e_x)vf(af)vf(e_y)=vf(e_x)vf(af_xye_xy)vf(e_y),defn-of-af'_xy}
	\begin{align*}
     \vf(e_x)\psi(\af)\vf(e_y)&=\vf(e_x)\vf(\af')\vf(e_y)=\vf(e_x)\vf(\af'_{xy}e_{xy})\vf(e_y)\\
     &=\vf(e_x)\vf(e_x)\vf(\af)\vf(e_y)\vf(e_y)=\vf(e_x)\vf(\af)\vf(e_y).
	\end{align*}
	Similarly, by \cref{vf(e)vf(r)=0,vf(e_y)vf(af)vf(e_x)=vf(e_y)vf(af_xye_xy)vf(e_x),defn-of-af'_xy}
	\begin{align*}
	\vf(e_y)\psi(\af)\vf(e_x)&=\vf(e_y)\vf(\af')\vf(e_x)=\vf(e_y)\vf(\af'_{xy}e_{xy})\vf(e_x)\\
	&=\vf(e_y)\vf(e_x)\vf(\af)\vf(e_y)\vf(e_x)=0.
	\end{align*}	
	Finally, by \cref{vf(af_xxe_x)=vf(e_x)vf(af)vf(e_x)}
	\begin{align*}
		\vf(e_x)\psi(\af)\vf(e_x)=\vf(e_x)\vf(\af')\vf(e_x)=\vf(\af'_{xx}e_{xx})=0,
	\end{align*}
	the latter equality being explained by the fact that $\af'\in FZ(\cC)$.
	
	The identities involving $\0$ are proved in an analogous way using
\cref{vf(af_xxe_x)=vf(e_x)vf(af)vf(e_x),vf(e_y)vf(af)vf(e_x)=vf(e_y)vf(af_xye_xy)vf(e_x),vf(e_x)vf(af)vf(e_y)=vf(e_x)vf(af_xye_xy)vf(e_y),defn-of-af''_xy}.
\end{proof}

The following lemma completes the previous one.

\begin{lem}\label{psi(af_x-e_xy)}
	Under the conditions of \cref{vf(e_x)psi(af)vf(e_y)-etc} one has
	\begin{align}
	\psi(\af_{xy}e_{xy})&=\vf(e_x)\psi(\af)\vf(e_y),\label{psi(af_xe-e_xy)=vf(e_x)psi(af)vf(e_y)}\\
	\0(\af_{xy}e_{xy})&=\vf(e_y)\0(\af)\vf(e_x)\label{0(af_xe-e_xy)=vf(e_y)0(af)vf(e_x)}
	\end{align}
	for all $x<y$.
\end{lem}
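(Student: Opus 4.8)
The plan is to prove \cref{psi(af_xe-e_xy)=vf(e_x)psi(af)vf(e_y)} by evaluating both sides in the ``matrix coordinates'' provided by the idempotents $\vf(e_x)$, i.e.\ by showing that the two sides have the same images under the sandwich maps $a\mapsto \vf(e_u)a\vf(e_v)+\vf(e_v)a\vf(e_u)$ (for $u<v$) and $a\mapsto \vf(e_u)a\vf(e_u)$, and then invoking \cref{a=b-in-A}. So first I would fix $x<y$ and set $\bt=\af_{xy}e_{xy}\in FZ(\cC)$; since $\psi$ is defined on $FZ(\cC)$ and $\bt\in FZ(\cC)$, the left-hand side $\psi(\bt)$ makes sense. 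The right-hand side is already of the form $\vf(e_x)\,\psi(\af)\,\vf(e_y)$, and by \cref{vf(e_x)psi(af)vf(e_y)=vf(e_x)vf(af)vf(e_y)} it equals $\vf(e_x)\vf(\af)\vf(e_y)$, which by \cref{defn-of-af'_xy} equals $\vf(\af'_{xy}e_{xy})$.

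Next I would compute $\bt'$, the ``prime part'' of $\bt=\af_{xy}e_{xy}$ in the sense of \cref{af'-and-af''-are-finitary-series}. For any pair $u<v$ we have $\vf(e_u)\vf(\bt)\vf(e_v)=\vf(\bt'_{uv}e_{uv})$ by \cref{defn-of-af'_xy}. Using \cref{vf(e_x)vf(af)vf(e_y)=vf(e_x)vf(af_xye_xy)vf(e_y)} (applied with $\af$ replaced by $\bt$, whose only nonzero off-diagonal coordinate is in position $(x,y)$) together with the orthogonality relations $\vf(e_u)\vf(e_w)=\vf(e_ue_w)$ from \cref{vf(e)vf(r)=vf(r)vf(e)=vf(re)} (or directly from \cref{vf(e)vf(r)=0} when $u\ne w$), one sees that $\vf(e_u)\vf(\bt)\vf(e_v)$ is zero unless $(u,v)=(x,y)$, in which case it equals $\vf(e_x)\vf(\bt)\vf(e_y)=\vf(e_x)\vf(\af_{xy}e_{xy})\vf(e_y)=\vf(e_x)\vf(\af)\vf(e_y)$, again by \cref{vf(e_x)vf(af)vf(e_y)=vf(e_x)vf(af_xye_xy)vf(e_y)} read in the other direction. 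Hence $\bt'_{uv}e_{uv}=0$ for $(u,v)\ne(x,y)$ and $\bt'_{xy}e_{xy}$ satisfies $\vf(\bt'_{xy}e_{xy})=\vf(\af'_{xy}e_{xy})$, so by injectivity of $\vf$ we get $\bt'=\af'_{xy}e_{xy}$. Therefore $\psi(\bt)=\vf(\bt')=\vf(\af'_{xy}e_{xy})=\vf(e_x)\psi(\af)\vf(e_y)$, which is exactly \cref{psi(af_xe-e_xy)=vf(e_x)psi(af)vf(e_y)}.

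For \cref{0(af_xe-e_xy)=vf(e_y)0(af)vf(e_x)}, the argument is completely symmetric: one replaces $\af'$ by $\af''$, the defining relation \cref{defn-of-af'_xy} by \cref{defn-of-af''_xy}, the identity \cref{vf(e_x)vf(af)vf(e_y)=vf(e_x)vf(af_xye_xy)vf(e_y)} by \cref{vf(e_y)vf(af)vf(e_x)=vf(e_y)vf(af_xye_xy)vf(e_x)}, and the relevant line of \cref{vf(e_x)psi(af)vf(e_y)-etc} is the one asserting $\vf(e_y)\0(\af)\vf(e_x)=\vf(e_y)\vf(\af)\vf(e_x)$. Computing $\bt''$ for $\bt=\af_{xy}e_{xy}$ one finds $\bt''=\af''_{xy}e_{xy}$ by the same bookkeeping, and then $\0(\bt)=\vf(\bt'')=\vf(\af''_{xy}e_{xy})=\vf(e_y)\0(\af)\vf(e_x)$.

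The only mild obstacle is the bookkeeping that $\vf(e_u)\vf(\af_{xy}e_{xy})\vf(e_v)$ vanishes for $(u,v)\ne(x,y)$: this needs \cref{e_x-alpha-e_y} (which gives $e_u(\af_{xy}e_{xy})e_v=0$ unless $(u,v)=(x,y)$) transported through $\vf$ via \cref{vf(e)vf(r)=0} and \cref{vf(e)vf(r)=vf(r)vf(e)=vf(re)}, so that $\bt'$ and $\bt''$ really are supported on the single pair $(x,y)$. Once that is in place, everything else is a direct substitution of already-established identities, and no genuinely new idea is required beyond the coordinate-wise viewpoint of \cref{a=b-in-A}.
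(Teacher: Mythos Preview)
Your argument is correct, but it does not follow the route you announce in the first sentence, nor the route taken in the paper. The paper proves the lemma by a direct application of \cref{a=b-in-A}: it fixes $x<y$, runs through all pairs $u\le v$, and checks case by case that $\vf(e_u)\psi(\af_{xy}e_{xy})\vf(e_v)+\vf(e_v)\psi(\af_{xy}e_{xy})\vf(e_u)$ and $\vf(e_u)\vf(e_x)\psi(\af)\vf(e_y)\vf(e_v)+\vf(e_v)\vf(e_x)\psi(\af)\vf(e_y)\vf(e_u)$ agree, using the orthogonality relations and \cref{vf(e_x)psi(af)vf(e_y)-etc}. Your proof instead computes the ``prime'' series $\bt'$ of $\bt=\af_{xy}e_{xy}$ directly from the defining relation \cref{defn-of-af'_xy}, obtaining $\bt'=\af'_{xy}e_{xy}$, and then reads off $\psi(\bt)=\vf(\bt')=\vf(\af'_{xy}e_{xy})=\vf(e_x)\psi(\af)\vf(e_y)$. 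This avoids the case analysis entirely and never actually invokes \cref{a=b-in-A}; you should drop the opening sentence claiming that as the plan. What your approach buys is brevity, and it has the pleasant side effect of establishing the identity $(\af_{xy}e_{xy})'=\af'_{xy}e_{xy}$ up front---the paper only records this later, in \cref{'-of-restriction}, as a consequence of the present lemma. The paper's approach, on the other hand, illustrates the systematic use of \cref{a=b-in-A} that drives the rest of \cref{sec-jiso-FI(C)}. One cosmetic point: the label you cite as \texttt{vf(e)vf(r)=vf(r)vf(e)=vf(re)} does not exist in the paper; the correct reference is \cref{vf(e)vf(r)=vf(r)vf(e)}.
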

\begin{proof}
	Given arbitrary $u<v$ such that $(u,v)\ne(x,y)$, we see by
\cref{vf(e_x)psi(af)vf(e_y)=vf(e_x)vf(af)vf(e_y),vf(e_x)vf(af)vf(e_y)=vf(e_x)vf(af_xye_xy)vf(e_y)} that
	\begin{align*}
	\vf(e_u)\psi(\af_{xy}e_{xy})\vf(e_v)&= \vf(e_u)\vf(\af_{xy}e_{xy})\vf(e_v)=\vf(e_u)\vf((\af_{xy}e_{xy})_{uv}e_{uv})\vf(e_v),
	\end{align*}
	and $\vf(e_v)\psi(\af_{xy}e_{xy})\vf(e_u)=0$ thanks to \cref{vf(e_y)psi(af)vf(e_x)=vf(e_x)0(af)vf(e_y)=vf(e_x)psi(af)vf(e_x)=vf(e_x)0(af)vf(e_x)=0}.
Evidently,
	\begin{align*}
	\vf(e_u)\vf(e_x)\psi(\af)\vf(e_y)\vf(e_v)= 0 = \vf(e_v)\vf(e_x)\psi(\af)\vf(e_y)\vf(e_u)
	\end{align*}
	in view of \cref{vf(e)vf(r)=0}. Moreover,
	\begin{align*}
	\vf(e_x)\psi(\af_{xy}e_{xy})\vf(e_y)&=\vf(e_x)\vf(\af_{xy}e_{xy})\vf(e_y)=\vf(e_x)\vf(\af)\vf(e_y)\\
	&=\vf(e_x)\psi(\af)\vf(e_y)=\vf(e_x)^2\psi(\af)\vf(e_y)^2
	\end{align*}
	by \cref{vf(e_x)psi(af)vf(e_y)=vf(e_x)vf(af)vf(e_y),vf(e_x)vf(af)vf(e_y)=vf(e_x)vf(af_xye_xy)vf(e_y),vf(r^2)}, and
	\begin{align*}
	\vf(e_y)\psi(\af_{xy}e_{xy})\vf(e_x)= 0 = \vf(e_y)\vf(e_x)\psi(\af)\vf(e_y)\vf(e_x)
	\end{align*}
	by \cref{vf(e_y)psi(af)vf(e_x)=vf(e_x)0(af)vf(e_y)=vf(e_x)psi(af)vf(e_x)=vf(e_x)0(af)vf(e_x)=0,vf(e)vf(r)=0}. Finally,
	\begin{align*}
	\vf(e_u)\psi(\af_{xy}e_{xy})\vf(e_u)=0=\vf(e_u)\vf(e_x)\psi(\af)\vf(e_y)\vf(e_u)
	\end{align*}
	because of \cref{vf(e)vf(r)=0,vf(e_y)psi(af)vf(e_x)=vf(e_x)0(af)vf(e_y)=vf(e_x)psi(af)vf(e_x)=vf(e_x)0(af)vf(e_x)=0}. Thus,
\cref{psi(af_xe-e_xy)=vf(e_x)psi(af)vf(e_y)} holds by \cref{a=b-in-A}. The proof of \cref{0(af_xe-e_xy)=vf(e_y)0(af)vf(e_x)} is analogous.
\end{proof}

We shall also need a technical result, which deals with some kind of a restriction of a finitary series to a subset of ordered pairs.

\begin{defn}\label{defn-of-af|_X^Y}
	Given $\af\in FI(\cC)$ and $X,Y\subseteq\ob{\cC}$, define
	\begin{align}\label{af|_X^Y=sum_x-in-X-y-in-Y-af_xye_xy}
	\af|_X^Y=\sum_{x\in X, y\in Y, x\le y}\af_{xy}e_{xy}.
	\end{align}
\end{defn}
We shall write $\af|_x^Y$ for $\af|_{\{x\}}^Y$, $\af|_X^y$ for $\af|_X^{\{y\}}$ and $\af|_x^y$ for $\af|_{\{x\}}^{\{y\}}$. We shall also use the
following shorter notations:
\begin{align*}
	\af|_X:=\af|_X^{\ob{\cC}},\ \af|^X:=\af|_{\ob{\cC}}^X.
\end{align*}

Let us first consider some basic properties of this operation on finitary series.
\begin{lem}\label{properties-of-af|_X^Y}
	For all $\af,\bt\in FI(\cC)$ and $X,Y,U,V\subseteq\ob{\cC}$ one has
	\begin{enumerate}
		\item $(\af|_X^Y)|_U^V=\af|_{X\cap U}^{Y\cap V}$;\label{(af|_X^Y)_U^V}
		\item $(\af+\bt)|_X^Y=\af|_X^Y+\bt|_X^Y$;\label{(af+bt)|_X^Y=af|_X^Y+bt|_X^Y}
		\item $(\af\bt)|_X^Y=\af|_X\cdot\bt|^Y$.\label{(af-bt)|_X^Y=af|_X-bt|^Y}
	\end{enumerate}
\end{lem}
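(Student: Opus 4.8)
The plan is to verify each of the three identities in \cref{properties-of-af|_X^Y} by comparing the $(u,v)$-coefficients of both sides for arbitrary $u\le v$ in $\ob\cC$, using the defining formula \cref{af|_X^Y=sum_x-in-X-y-in-Y-af_xye_xy}. Throughout I will use the convention, recalled after \cref{formal-sum-in-I(C)}, that a series whose indices run over a subset $X$ of ordered pairs has zero coefficients outside $X$; thus $(\af|_X^Y)_{uv}=\af_{uv}$ when $u\in X$, $v\in Y$, $u\le v$, and $(\af|_X^Y)_{uv}=0_{uv}$ otherwise. Before doing this I should also observe that $\af|_X^Y$ is indeed a finitary series, which is immediate since its nonzero coefficients form a subset of those of $\af$.

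For \cref{(af|_X^Y)_U^V}, fix $u\le v$. The $(u,v)$-coefficient of $(\af|_X^Y)|_U^V$ equals $(\af|_X^Y)_{uv}$ if $u\in U$ and $v\in V$, and $0_{uv}$ otherwise; and $(\af|_X^Y)_{uv}$ in turn equals $\af_{uv}$ if $u\in X$, $v\in Y$, and $0_{uv}$ otherwise. Composing the two conditions, the coefficient is $\af_{uv}$ precisely when $u\in X\cap U$ and $v\in Y\cap V$, which is exactly the $(u,v)$-coefficient of $\af|_{X\cap U}^{Y\cap V}$. For \cref{(af+bt)|_X^Y=af|_X^Y+bt|_X^Y}, the $(u,v)$-coefficient of the left-hand side is $(\af+\bt)_{uv}=\af_{uv}+\bt_{uv}$ when $u\in X$, $v\in Y$, $u\le v$, and $0_{uv}$ otherwise; in either case this agrees with $(\af|_X^Y)_{uv}+(\bt|_X^Y)_{uv}$, using that addition of morphisms is coordinatewise and $0_{uv}+0_{uv}=0_{uv}$.

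For \cref{(af-bt)|_X^Y=af|_X-bt|^Y}, fix $u\le v$. By the convolution formula, the $(u,v)$-coefficient of $\af|_X\cdot\bt|^Y$ is $\sum_{u\le z\le v}(\af|_X)_{uz}(\bt|^Y)_{zv}$. Now $(\af|_X)_{uz}=\af_{uz}$ if $u\in X$ and is $0_{uz}$ otherwise, while $(\bt|^Y)_{zv}=\bt_{zv}$ if $v\in Y$ and is $0_{zv}$ otherwise; note that the condition on the first factor depends only on $u$ and the condition on the second only on $v$, so both conditions can be pulled outside the sum. Hence this coefficient equals $\sum_{u\le z\le v}\af_{uz}\bt_{zv}=(\af\bt)_{uv}$ when $u\in X$ and $v\in Y$, and equals $0_{uv}$ otherwise (each summand being a product with a zero factor); this is exactly the $(u,v)$-coefficient of $(\af\bt)|_X^Y$, since $(\af\bt)|_X^Y$ has coefficient $(\af\bt)_{uv}$ for $u\in X$, $v\in Y$ and $0_{uv}$ elsewhere. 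There is no real obstacle here: the only point requiring a little care is the bookkeeping in \cref{(af-bt)|_X^Y=af|_X-bt|^Y}, namely recognizing that restricting the left index of a product to $X$ is the same as restricting the left factor to $X$ (and symmetrically on the right), which is what makes the restriction of $X$ and $Y$ factor through the two arguments.
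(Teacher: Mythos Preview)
Your proof is correct and follows essentially the same approach as the paper: both verify the identities by comparing $(u,v)$-coefficients, with the paper declaring \cref{(af|_X^Y)_U^V,(af+bt)|_X^Y=af|_X^Y+bt|_X^Y} obvious and carrying out the same case analysis for \cref{(af-bt)|_X^Y=af|_X-bt|^Y} that you do. Your added remark that $\af|_X^Y$ is finitary and your slightly more explicit treatment of \cref{(af|_X^Y)_U^V,(af+bt)|_X^Y=af|_X^Y+bt|_X^Y} are harmless elaborations, not a different route.
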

\begin{proof}
	Items \cref{(af|_X^Y)_U^V,(af+bt)|_X^Y=af|_X^Y+bt|_X^Y} are obvious. For \cref{(af-bt)|_X^Y=af|_X-bt|^Y} consider first a pair $x\le y$ with $x\in
X$ and $y\in Y$. Then
	\begin{align*}
		((\af\bt)|_X^Y)_{xy}=(\af\bt)_{xy}=\sum_{x\le z\le y}\af_{xz}\bt_{zy}=\sum_{x\le z\le
y}\left(\af|_X\right)_{xz}\left(\bt|^Y\right)_{zy}=\left(\af|_X\cdot\bt|^Y\right)_{xy}.
	\end{align*}
	Now let $u\le v$, such that $u\not\in X$. Then
	\begin{align}\label{((af-bt)|_X^Y)_uv=(af|_X-bt|^Y)_uv=0}
		((\af\bt)|_X^Y)_{uv}= 0 = (\af|_X\cdot\bt|^Y)_{uv},
	\end{align}
as $(\af|_X)_{uw}=0_{uw}$ for all $w\in\ob{\cC}$. Similarly \cref{((af-bt)|_X^Y)_uv=(af|_X-bt|^Y)_uv=0} holds, when $v\not\in Y$.
\end{proof}

\begin{lem}\label{vf(e_X)vf(af)vf(e_Y)-in-terms-of-af|_X^Y}
	Let $\vf:FI(\cC)\to A$ be a Jordan isomorphism and $\psi,\0:FZ(\cC)\to A$ the associated maps given by \cref{defn-of-psi,defn-of-0}. Then for all
$\af\in FZ(\cC)$ and $X,Y\subseteq\ob{\cC}$ one has
	\begin{align}
	\vf(e_X)\psi(\af)\vf(e_Y)&=\vf(e_X)\psi\left(\af|_X^Y\right)\vf(e_Y),\label{vf(e_X)psi(af)vf(e_Y)=vf(e_X)psi(af|_X^Y)vf(e_Y)}\\
	\vf(e_X)\0(\af)\vf(e_Y)&=\vf(e_X)\0\left(\af|_Y^X\right)\vf(e_Y).\label{vf(e_X)0(af)vf(e_Y)=vf(e_X)0(af|_Y^X)vf(e_Y)}
	\end{align}
\end{lem}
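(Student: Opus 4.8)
The plan is to obtain \cref{vf(e_X)psi(af)vf(e_Y)=vf(e_X)psi(af|_X^Y)vf(e_Y)} as an application of \cref{a=b-in-A} to the elements $a=\vf(e_X)\psi(\af)\vf(e_Y)$ and $b=\vf(e_X)\psi(\af|_X^Y)\vf(e_Y)$ of $A$, and then to deduce \cref{vf(e_X)0(af)vf(e_Y)=vf(e_X)0(af|_Y^X)vf(e_Y)} by an entirely parallel argument for $\0$. Before that I would record two elementary facts. First, $e_X$ is a diagonal idempotent of $FI(\cC)$ and $e_ue_X=e_Xe_u$ equals $e_u$ when $u\in X$ and $0$ otherwise, so by \cref{vf(e)vf(r)=vf(r)vf(e)} the product $\vf(e_u)\vf(e_X)=\vf(e_X)\vf(e_u)$ equals $\vf(e_u)$ for $u\in X$ and vanishes for $u\notin X$, and symmetrically $\vf(e_Y)\vf(e_v)$ equals $\vf(e_v)$ for $v\in Y$ and vanishes otherwise. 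Second, since $\af\in FZ(\cC)$, every diagonal entry of $\af|_X^Y$ is $0$, so $\af|_X^Y\in FZ(\cC)$ and $\psi(\af|_X^Y)$ is defined.

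Next I would fix $u,v\in\ob{\cC}$ with $u<v$ and evaluate the local components of $a$ and $b$ appearing in \cref{a=b-in-A}. Using the first fact together with \cref{vf(e_y)psi(af)vf(e_x)=vf(e_x)0(af)vf(e_y)=vf(e_x)psi(af)vf(e_x)=vf(e_x)0(af)vf(e_x)=0}, the component $\vf(e_v)a\vf(e_u)$ is either $0$ outright (when $v\notin X$ or $u\notin Y$) or equals $\vf(e_v)\psi(\af)\vf(e_u)=0$, and $\vf(e_u)a\vf(e_u)$ is either $0$ outright or equals $\vf(e_u)\psi(\af)\vf(e_u)=0$; the same holds verbatim with $\af$ replaced by $\af|_X^Y$, hence for $b$. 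So it only remains to verify $\vf(e_u)a\vf(e_v)=\vf(e_u)b\vf(e_v)$. If $u\notin X$ or $v\notin Y$ both sides are $0$; and if $u\in X$ and $v\in Y$ then, by \cref{vf(e_x)psi(af)vf(e_y)=vf(e_x)vf(af)vf(e_y),vf(e_x)vf(af)vf(e_y)=vf(e_x)vf(af_xye_xy)vf(e_y)}, $\vf(e_u)a\vf(e_v)$ equals $\vf(e_u)\vf(\af_{uv}e_{uv})\vf(e_v)$ and $\vf(e_u)b\vf(e_v)$ equals $\vf(e_u)\vf((\af|_X^Y)_{uv}e_{uv})\vf(e_v)$; these coincide because $(\af|_X^Y)_{uv}=\af_{uv}$ precisely when $u\in X$, $v\in Y$ and $u\le v$. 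By \cref{a=b-in-A} this gives $a=b$, i.e.\ \cref{vf(e_X)psi(af)vf(e_Y)=vf(e_X)psi(af|_X^Y)vf(e_Y)}.

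For \cref{vf(e_X)0(af)vf(e_Y)=vf(e_X)0(af|_Y^X)vf(e_Y)} I would run the same scheme with $a=\vf(e_X)\0(\af)\vf(e_Y)$ and $b=\vf(e_X)\0(\af|_Y^X)\vf(e_Y)$. This time the components $\vf(e_u)a\vf(e_v)$ and $\vf(e_u)a\vf(e_u)$ vanish by \cref{vf(e_y)psi(af)vf(e_x)=vf(e_x)0(af)vf(e_y)=vf(e_x)psi(af)vf(e_x)=vf(e_x)0(af)vf(e_x)=0} (the same holding for $b$), while for $\vf(e_v)a\vf(e_u)$ with $u<v$ the only surviving case is $v\in X$, $u\in Y$, where \cref{vf(e_x)psi(af)vf(e_y)-etc} and \cref{vf(e_y)vf(af)vf(e_x)=vf(e_y)vf(af_xye_xy)vf(e_x)} reduce both $\vf(e_v)a\vf(e_u)$ and $\vf(e_v)b\vf(e_u)$ to $\vf(e_v)\vf(\af_{uv}e_{uv})\vf(e_u)$, using that $(\af|_Y^X)_{uv}=\af_{uv}$ when $u\in Y$, $v\in X$, $u\le v$; the transposition of the restriction indices relative to the $\psi$-case is precisely what matches the order reversal built into $\0$. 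A further application of \cref{a=b-in-A} completes the proof.

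I do not expect a genuine obstacle here: the argument is essentially bookkeeping built on \cref{a=b-in-A} and on the already-established properties of $\psi$ and $\0$. The one thing to be careful about is keeping the case distinctions on the memberships $u\in X$ and $v\in Y$ synchronized between $a$ and $b$, and checking that in each non-vanishing case the relevant entry of $\af|_X^Y$ (respectively $\af|_Y^X$) really does coincide with that of $\af$.
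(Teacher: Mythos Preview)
Your argument is correct and follows the same essential strategy as the paper: verify the identity via \cref{a=b-in-A} by computing the local components $\vf(e_u)(\cdot)\vf(e_v)$, using that $\vf(e_u)\vf(e_X)$ either equals $\vf(e_u)$ or vanishes, together with the vanishing and identification formulas of \cref{vf(e_x)psi(af)vf(e_y)-etc}. The one organizational difference is that the paper first reduces, by writing $e_X=e_{X\setminus Y}+e_{X\cap Y}$ and $e_Y=e_{Y\setminus X}+e_{X\cap Y}$, to the two special cases $X\cap Y=\emptyset$ and $X=Y$, and then runs the \cref{a=b-in-A} case analysis separately in each; you instead apply \cref{a=b-in-A} directly to arbitrary $X,Y$, which avoids that preliminary reduction and the attendant bookkeeping. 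Your route is a bit shorter, while the paper's split makes the membership subcases slightly more transparent; neither approach uses any idea the other lacks.
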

\begin{proof}
	Observe that $e_X=e_{X\setminus Y}+e_{X\cap Y}$, $e_Y=e_{Y\setminus X}+e_{X\cap Y}$ and
	\begin{align*}
	\af|_X^Y=\af|_{X\setminus Y}^{Y\setminus X}+\af|_{X\setminus Y}^{X\cap Y}+\af|_{X\cap Y}^{Y\setminus X}+\af|_{X\cap Y}^{X\cap Y},
	\end{align*}
	so in view of distributivity of multiplication it suffices to prove
\cref{vf(e_X)psi(af)vf(e_Y)=vf(e_X)psi(af|_X^Y)vf(e_Y),vf(e_X)0(af)vf(e_Y)=vf(e_X)0(af|_Y^X)vf(e_Y)} in the following two cases:
	\begin{enumerate}
		\item $X\cap Y=\emptyset$;\label{case-X-and-Y-disjoint}
		\item $X=Y$.\label{case-X=Y}
	\end{enumerate}
	
	\emph{Case \cref{case-X-and-Y-disjoint}.} Assume that $X$ and $Y$ are disjoint. To show that \cref{vf(e_X)psi(af)vf(e_Y)=vf(e_X)psi(af|_X^Y)vf(e_Y)}
holds, we apply \cref{a=b-in-A}. Notice that multiplying any side of \cref{vf(e_X)psi(af)vf(e_Y)=vf(e_X)psi(af|_X^Y)vf(e_Y)} by $\vf(e_u)$ on the left
and on the right, we get zero, as $e_u$ is orthogonal either to $e_X$ or to $e_Y$, or to both of them. Now take $u<v$ and consider
	\begin{align}\label{vf(e_u)vf(e_X)vf(af)vf(e_Y)vf(e_v)+vf(e_v)vf(e_X)vf(af)vf(e_Y)vf(e_u)}
	\vf(e_u)\vf(e_X)\psi(\af)\vf(e_Y)\vf(e_v)+\vf(e_v)\vf(e_X)\psi(\af)\vf(e_Y)\vf(e_u).
	\end{align}
	If $u\not\in X\sqcup Y$ or $v\not\in X\sqcup Y$, then \cref{vf(e_u)vf(e_X)vf(af)vf(e_Y)vf(e_v)+vf(e_v)vf(e_X)vf(af)vf(e_Y)vf(e_u)} is zero, since
$e_u$ and $e_v$ are orthogonal both to $e_X$ and to $e_Y$. If $u,v\in X$, then $u,v\not\in Y$, i.e. $e_u$ and $e_v$ are orthogonal to $e_Y$, so
\cref{vf(e_u)vf(e_X)vf(af)vf(e_Y)vf(e_v)+vf(e_v)vf(e_X)vf(af)vf(e_Y)vf(e_u)} is again zero. By symmetry the same holds, when $u,v\in Y$. If $u\in Y$ and
$v\in X$, then $u\not\in X$ and $v\not\in Y$, hence by \cref{vf(e)vf(r)=vf(r)vf(e)} equality
\cref{vf(e_u)vf(e_X)vf(af)vf(e_Y)vf(e_v)+vf(e_v)vf(e_X)vf(af)vf(e_Y)vf(e_u)} becomes $\vf(e_v)\psi(\af)\vf(e_u)$, which is zero thanks to
\cref{vf(e_y)psi(af)vf(e_x)=vf(e_x)0(af)vf(e_y)=vf(e_x)psi(af)vf(e_x)=vf(e_x)0(af)vf(e_x)=0}. Notice also that all these four subcases do not depend on
$\af$, so everything remains valid with $\af$ replaced by $\af|_X^Y$. Finally, let $u\in X$ and $v\in Y$. Then $u\not\in Y$, $v\not\in X$, and thus
\cref{vf(e_u)vf(e_X)vf(af)vf(e_Y)vf(e_v)+vf(e_v)vf(e_X)vf(af)vf(e_Y)vf(e_u)} equals $\vf(e_u)\psi(\af)\vf(e_v)$. The latter is
	\begin{align}
		\vf(e_u)\vf(\af)\vf(e_v)&=\vf(e_u)\vf(\af_{uv}e_{uv})\vf(e_v)=\vf(e_u)\vf((\af|_X^Y)_{uv}e_{uv})\vf(e_v)\notag\\
		&=\vf(e_u)\vf(\af|_X^Y)\vf(e_v)=\vf(e_u)\psi(\af|_X^Y)\vf(e_v)\label{vf(e_u)vf(af)vf(e_v)=vf(e_u)psi(af_X^Y)vf(e_v)}
	\end{align}
	according to \cref{vf(e_x)psi(af)vf(e_y)=vf(e_x)vf(af)vf(e_y),vf(e_x)vf(af)vf(e_y)=vf(e_x)vf(af_xye_xy)vf(e_y),af|_X^Y=sum_x-in-X-y-in-Y-af_xye_xy}.
If, maintaining the assumptions on $u$ and $v$, we substitute $\af|_X^Y$ for $\af$ in
\cref{vf(e_u)vf(e_X)vf(af)vf(e_Y)vf(e_v)+vf(e_v)vf(e_X)vf(af)vf(e_Y)vf(e_u)}, then we get $\vf(e_u)\psi((\af|_X^Y)|_X^Y)\vf(e_v)$. But this is the same
as $\vf(e_u)\psi(\af|_X^Y)\vf(e_v)$ by \cref{(af|_X^Y)_U^V} of \cref{properties-of-af|_X^Y}, which in view of
\cref{vf(e_u)vf(af)vf(e_v)=vf(e_u)psi(af_X^Y)vf(e_v)}  completes the proof of \cref{vf(e_X)psi(af)vf(e_Y)=vf(e_X)psi(af|_X^Y)vf(e_Y)}. The same
technique is used to prove \cref{vf(e_X)0(af)vf(e_Y)=vf(e_X)0(af|_Y^X)vf(e_Y)}, and in this situation the only non-trivial subcase will be $u\in Y$ and
$v\in X$, which explains why $X$ and $Y$ are ``switched'' in the right-hand side of \cref{vf(e_X)0(af)vf(e_Y)=vf(e_X)0(af|_Y^X)vf(e_Y)}.
	
	\emph{Case \cref{case-X=Y}.} Let $X=Y$. For \cref{vf(e_X)psi(af)vf(e_Y)=vf(e_X)psi(af|_X^Y)vf(e_Y)} we shall again use \cref{a=b-in-A} skipping some
details, as the structure of the proof will be similar to the one in Case \cref{case-X-and-Y-disjoint}. For any $u\in\ob{\cC}$ the multiplication of any
side of \cref{vf(e_X)psi(af)vf(e_Y)=vf(e_X)psi(af|_X^Y)vf(e_Y)} by $e_u$ on the left and on the right gives zero either by \cref{vf(e)vf(r)=0}, when
$u\not\in X$, or by \cref{vf(e_y)psi(af)vf(e_x)=vf(e_x)0(af)vf(e_y)=vf(e_x)psi(af)vf(e_x)=vf(e_x)0(af)vf(e_x)=0}, when $u\in X$. Now, given $u<v$, we
see that both of the summands of \cref{vf(e_u)vf(e_X)vf(af)vf(e_Y)vf(e_v)+vf(e_v)vf(e_X)vf(af)vf(e_Y)vf(e_u)} are zero for any $\af$, when
$\{u,v\}\not\subseteq X$. If $u,v\in X$, then \cref{vf(e_u)vf(e_X)vf(af)vf(e_Y)vf(e_v)+vf(e_v)vf(e_X)vf(af)vf(e_Y)vf(e_u)} reduces to
$\vf(e_u)\psi(\af)\vf(e_v)+\vf(e_v)\psi(\af)\vf(e_u)$, whose second summand is zero by
\cref{vf(e_y)psi(af)vf(e_x)=vf(e_x)0(af)vf(e_y)=vf(e_x)psi(af)vf(e_x)=vf(e_x)0(af)vf(e_x)=0}, and the first one is $\vf(e_u)\psi(\af|_X^Y)\vf(e_v)$ as
in \cref{vf(e_u)vf(af)vf(e_v)=vf(e_u)psi(af_X^Y)vf(e_v)}. Since $\vf(e_v)\psi(\af|_X^Y)\vf(e_u)$ is also zero by
\cref{vf(e_y)psi(af)vf(e_x)=vf(e_x)0(af)vf(e_y)=vf(e_x)psi(af)vf(e_x)=vf(e_x)0(af)vf(e_x)=0}, then
	\begin{align*}
		\vf(e_u)\psi(\af)\vf(e_v)+\vf(e_v)\psi(\af)\vf(e_u)=\vf(e_u)\psi(\af|_X^Y)\vf(e_v)+\vf(e_v)\psi(\af|_X^Y)\vf(e_u),
	\end{align*}
	and we are done as in Case \cref{case-X-and-Y-disjoint}. The proof of \cref{vf(e_X)0(af)vf(e_Y)=vf(e_X)0(af|_Y^X)vf(e_Y)} is totally symmetric to
the proof of \cref{vf(e_X)psi(af)vf(e_Y)=vf(e_X)psi(af|_X^Y)vf(e_Y)}.
\end{proof}

In particular, taking $X=\ob{\cC}$ or $Y=\ob{\cC}$ in \cref{vf(e_X)vf(af)vf(e_Y)-in-terms-of-af|_X^Y} and observing that $e_{\ob{\cC}}$ is the identity
element which is preserved by $\vf$, we get the following formulas.
\begin{cor}\label{vf(e_X)psi(af)-etc}
 Let $\vf:FI(\cC)\to A$ be a Jordan isomorphism and $\psi,\0:FZ(\cC)\to A$ the associated maps given by \cref{defn-of-psi,defn-of-0}. Then for all
 $\af\in FZ(\cC)$ and $X\subseteq\ob{\cC}$
\begin{align}
	\vf(e_X)\psi(\af)&=\vf(e_X)\psi\left(\af|_X\right),\label{vf(e_X)psi(af)=vf(e_X)psi(af|_X)}\\
	\psi(\af)\vf(e_X)&=\psi\left(\af|^X\right)\vf(e_X),\label{psi(af)vf(e_X)=psi(af|^X)vf(e_X)}\\
	\vf(e_X)\0(\af)&=\vf(e_X)\0\left(\af|^X\right),\label{vf(e_X)0(af)=vf(e_X)0(af|^X)}\\
	\0(\af)\vf(e_X)&=\0\left(\af|_X\right)\vf(e_X).\label{0(af)vf(e_X)=0(af|_X)vf(e_X)}
\end{align}
\end{cor}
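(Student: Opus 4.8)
The plan is to obtain \cref{vf(e_X)psi(af)-etc} as a direct specialization of \cref{vf(e_X)vf(af)vf(e_Y)-in-terms-of-af|_X^Y}, taking one of the two subsets to be all of $\ob{\cC}$. The one preliminary remark I would record is that $e_{\ob{\cC}}=\delta$ is the identity element of $FI(\cC)$, so, since $\vf$ is onto $A$, $\vf(e_{\ob{\cC}})=\vf(\delta)$ is the identity of $A$ by the observation following \cref{vf(e)vf(r)=vf(r)vf(e)}. I would also note the trivial identifications $\af|_X^{\ob{\cC}}=\af|_X$ and $\af|_{\ob{\cC}}^Y=\af|^Y$, which are nothing but the abbreviations introduced after \cref{defn-of-af|_X^Y}.

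With this in hand, setting $Y=\ob{\cC}$ in \cref{vf(e_X)psi(af)vf(e_Y)=vf(e_X)psi(af|_X^Y)vf(e_Y)} and dropping the right-hand factor $\vf(e_{\ob{\cC}})$, which acts as the identity of $A$, yields \cref{vf(e_X)psi(af)=vf(e_X)psi(af|_X)}; setting instead $X=\ob{\cC}$ in the same equation and dropping the left-hand factor $\vf(e_{\ob{\cC}})$ yields \cref{psi(af)vf(e_X)=psi(af|^X)vf(e_X)} after renaming $Y$ as $X$. The two formulas for $\0$ are obtained in exactly the same way from \cref{vf(e_X)0(af)vf(e_Y)=vf(e_X)0(af|_Y^X)vf(e_Y)}: taking $Y=\ob{\cC}$ turns the restriction $\af|_Y^X$ into $\af|^X$ and gives \cref{vf(e_X)0(af)=vf(e_X)0(af|^X)}, while taking $X=\ob{\cC}$ turns $\af|_Y^X$ into $\af|_Y$ and gives \cref{0(af)vf(e_X)=0(af|_X)vf(e_X)} after renaming $Y$ as $X$.

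I do not expect any genuine obstacle here: the whole content is already carried by \cref{vf(e_X)vf(af)vf(e_Y)-in-terms-of-af|_X^Y}, and the corollary is merely its boundary case in which one of the two idempotents is the full identity. The only point that requires a moment's attention is keeping track of the ``switch'' of $X$ and $Y$ present in the $\0$-version of the lemma; this is precisely why, in \cref{vf(e_X)0(af)=vf(e_X)0(af|^X),0(af)vf(e_X)=0(af|_X)vf(e_X)}, the restriction that appears on the right-hand side sits on the opposite side from the factor $\vf(e_X)$ compared with the $\psi$-formulas \cref{vf(e_X)psi(af)=vf(e_X)psi(af|_X),psi(af)vf(e_X)=psi(af|^X)vf(e_X)}.
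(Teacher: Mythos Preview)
Your proposal is correct and follows exactly the paper's own argument: specialize \cref{vf(e_X)vf(af)vf(e_Y)-in-terms-of-af|_X^Y} by taking one of $X,Y$ to be $\ob{\cC}$ and use that $\vf(e_{\ob{\cC}})=\vf(\delta)$ is the identity of $A$. Your extra remark about the $X$--$Y$ switch in the $\0$-formulas is accurate and helpful, but adds nothing beyond what the paper already notes.
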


The next lemma shows that the maps $\af\mapsto\af'$ and $\af\mapsto\af''$ are compatible  with the operation defined by
\cref{af|_X^Y=sum_x-in-X-y-in-Y-af_xye_xy}.

\begin{lem}\label{'-of-restriction}
	For any $\af\in FZ(\cC)$ and $U,V\sst\ob{\cC}$ one has
	\begin{align}
		\left(\af|_U^V\right)'&=\af'|_U^V,\label{(af|_U^V)'=af'|_U^V}\\
		\left(\af|_U^V\right)''&=\af''|_U^V.\label{(af|_U^V)''=af''|_U^V}
	\end{align}
\end{lem}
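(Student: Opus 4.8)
The plan is to prove \cref{(af|_U^V)'=af'|_U^V} by comparing the coefficients of the two finitary series entry by entry, and then to deduce \cref{(af|_U^V)''=af''|_U^V} from \cref{af=af'+af''}.

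First I would record the bookkeeping that makes the comparison legitimate. Since $\af\in FZ(\cC)$, its diagonal entries vanish, hence so do those of $\af|_U^V$, so $\af|_U^V\in FZ(\cC)$ and \cref{defn-of-af'_xy,defn-of-af''_xy,defn-of-af',defn-of-af'',af'-and-af''-are-finitary-series} apply to it; in particular $(\af|_U^V)'\in FZ(\cC)$. Likewise $\af'\in FZ(\cC)$ gives $\af'|_U^V\in FZ(\cC)$. Thus both sides of \cref{(af|_U^V)'=af'|_U^V} have zero diagonal, and it suffices to check that $((\af|_U^V)')_{xy}=(\af'|_U^V)_{xy}$ for every $x<y$.

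Fix $x<y$. Combining \cref{defn-of-af'_xy} for $\af|_U^V$ with \cref{vf(e_x)vf(af)vf(e_y)=vf(e_x)vf(af_xye_xy)vf(e_y)} applied to $\af|_U^V$ gives
\[
\vf\bigl(((\af|_U^V)')_{xy}e_{xy}\bigr)=\vf(e_x)\vf\bigl((\af|_U^V)_{xy}e_{xy}\bigr)\vf(e_y).
\]
If $x\in U$ and $y\in V$ then $(\af|_U^V)_{xy}=\af_{xy}$, so the right-hand side equals $\vf(e_x)\vf(\af_{xy}e_{xy})\vf(e_y)=\vf(e_x)\vf(\af)\vf(e_y)=\vf(\af'_{xy}e_{xy})$ by \cref{vf(e_x)vf(af)vf(e_y)=vf(e_x)vf(af_xye_xy)vf(e_y),defn-of-af'_xy}, and injectivity of $\vf$ yields $((\af|_U^V)')_{xy}=\af'_{xy}=(\af'|_U^V)_{xy}$. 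If instead $x\notin U$ or $y\notin V$ then $(\af|_U^V)_{xy}=0_{xy}$, so the right-hand side is $0=\vf(0)$, whence $((\af|_U^V)')_{xy}=0_{xy}=(\af'|_U^V)_{xy}$. This proves \cref{(af|_U^V)'=af'|_U^V}.

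Finally, for \cref{(af|_U^V)''=af''|_U^V} I would restrict the identity $\af=\af'+\af''$ of \cref{af=af'+af''}: by \cref{(af+bt)|_X^Y=af|_X^Y+bt|_X^Y} of \cref{properties-of-af|_X^Y} one gets $\af|_U^V=\af'|_U^V+\af''|_U^V$, while \cref{af=af'+af''} applied to $\af|_U^V$ gives $\af|_U^V=(\af|_U^V)'+(\af|_U^V)''$; subtracting and using \cref{(af|_U^V)'=af'|_U^V} gives the claim. (Equivalently, one repeats the coefficient computation verbatim with \cref{defn-of-af''_xy,vf(e_y)vf(af)vf(e_x)=vf(e_y)vf(af_xye_xy)vf(e_x)} in place of \cref{defn-of-af'_xy,vf(e_x)vf(af)vf(e_y)=vf(e_x)vf(af_xye_xy)vf(e_y)}.) I do not anticipate any real obstacle; the only points requiring care are tracking which series lie in $FZ(\cC)$ and the case split $x\in U,\,y\in V$ versus $x\notin U$ or $y\notin V$ in the entry-wise computation.
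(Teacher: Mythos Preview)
Your proof is correct and follows essentially the same route as the paper: an entry-by-entry comparison with the case split $x\in U,\ y\in V$ versus not, followed by the deduction of \cref{(af|_U^V)''=af''|_U^V} from \cref{af=af'+af''} by additivity. The only cosmetic difference is that the paper first isolates the auxiliary identity $(\af_{xy}e_{xy})'=\af'_{xy}e_{xy}$ via $\psi$ and \cref{psi(af_xe-e_xy)=vf(e_x)psi(af)vf(e_y),vf(e_x)psi(af)vf(e_y)=vf(e_x)vf(af)vf(e_y)}, whereas you bypass $\psi$ and work directly with \cref{defn-of-af'_xy,vf(e_x)vf(af)vf(e_y)=vf(e_x)vf(af_xye_xy)vf(e_y)}, which is slightly more economical.
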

\begin{proof}
	We first observe that for all $x<y$
	\begin{align*}
		\vf((\af_{xy}e_{xy})')=\psi(\af_{xy}e_{xy})=\vf(e_x)\psi(\af)\vf(e_y)=\vf(e_x)\vf(\af)\vf(e_y)=\vf(\af'_{xy}e_{xy})
	\end{align*}
	by \cref{psi(af_xe-e_xy)=vf(e_x)psi(af)vf(e_y),vf(e_x)psi(af)vf(e_y)=vf(e_x)vf(af)vf(e_y),defn-of-af'_xy,defn-of-psi}. Since $\vf$ is bijective, it
follows that
	\begin{align}\label{(af_xye_xy)'=af'_xye_xy}
		(\af_{xy}e_{xy})'=\af'_{xy}e_{xy}.
	\end{align}
	Now let $u\in U$ and $v\in V$. Then using \cref{(af_xye_xy)'=af'_xye_xy} we have
	\begin{align*} \left(\af|_U^V\right)'_{uv}=\left(\left(\af|_U^V\right)_{uv}e_{uv}\right)'_{uv}=\left(\af_{uv}e_{uv}\right)'_{uv}=\af'_{uv}=\left(\af'|_U^V\right)_{uv}.
	\end{align*}
	And if $u\not\in U$ or $v\not\in V$, then
	\begin{align*}
	\left(\af|_U^V\right)'_{uv}=\left(\left(\af|_U^V\right)_{uv}e_{uv}\right)'_{uv}=0_{uv}=\left(\af'|_U^V\right)_{uv}.
	\end{align*}
	Here we applied \cref{(af_xye_xy)'=af'_xye_xy} and used that $\left(\af|_U^V\right)_{uv}=0_{uv}= \left(\af'|_U^V\right)_{uv}$ . This proves
\cref{(af|_U^V)'=af'|_U^V}. Equality \cref{(af|_U^V)''=af''|_U^V} follows by additivity, as $\af=\af'+\af''$.
\end{proof}

We proceed by showing that the maps $\af\mapsto\af'$ and $\af\mapsto\af''$ are also compatible with the multiplication in $FI(\cC)$.
\begin{lem}\label{af'-hom-and-af''-anti-hom}
	The map $\af\mapsto\af'$ (resp. $\af\mapsto\af''$) is a homomorphism (resp. anti-homomorphism) of (non-unital) rings $FZ(\cC)\to FZ(\cC)$.
\end{lem}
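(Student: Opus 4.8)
The plan is to verify the two multiplicativity identities $(\af\bt)'=\af'\bt'$ and $(\af\bt)''=\bt''\af''$ for all $\af,\bt\in FZ(\cC)$ by applying the separation criterion of \cref{a=b-in-A}: it suffices to check, for every $x<y$, that
$\vf(e_x)\vf((\af\bt)')\vf(e_y)=\vf(e_x)\vf(\af'\bt')\vf(e_y)$ and the symmetric equality with $e_x,e_y$ swapped, together with the diagonal equalities $\vf(e_x)\vf((\af\bt)')\vf(e_x)=\vf(e_x)\vf(\af'\bt')\vf(e_x)$. Since $(\af\bt)',\af'\bt'\in FZ(\cC)$ (the first because $\af\bt\in FZ(\cC)$ and the map $\af\mapsto\af'$ lands in $FZ(\cC)$, the second because $FZ(\cC)$ is an ideal), the diagonal parts are $\vf(0)=0$ on both sides, and the $(y,x)$-part is handled exactly like the $(x,y)$-part by symmetry; so the whole problem reduces to the single family of identities
$\vf(e_x)\vf((\af\bt)')\vf(e_y)=\vf(e_x)\vf(\af'\bt')\vf(e_y)$ for $x<y$.

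For the left-hand side, since $(\af\bt)'=\psi(\af\bt)$ applied under $\vf\m$ — more precisely $\vf((\af\bt)')=\psi(\af\bt)$ by \cref{defn-of-psi} — and $\af\bt=(\af\bt)|_x\cdot(\af\bt)|^y$ has $(x,y)$-entry $\sum_{x\le z\le y}\af_{xz}\bt_{zy}$, I would use \cref{vf(e_x)psi(af)vf(e_y)=vf(e_x)vf(af)vf(e_y)} followed by \cref{vf(e_x)vf(af)vf(e_y)=vf(e_x)vf(af_xye_xy)vf(e_y)} to rewrite $\vf(e_x)\vf((\af\bt)')\vf(e_y)=\vf(e_x)\vf(\af\bt)\vf(e_y)=\vf(e_x)\vf((\af\bt)_{xy}e_{xy})\vf(e_y)$. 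Now $(\af\bt)_{xy}e_{xy}=\sum_{x\le z\le y}\af_{xz}\bt_{zy}e_{xy}$, which by \cref{af_xye_xy-cdot-bt_uve_uv} equals $\sum_{x\le z\le y}(\af_{xz}e_{xz})(\bt_{zy}e_{zy})$, i.e. it is $(e_x\af e_{[x,y]})(e_{[x,y]}\bt e_y)$ expressed diagonally; the key point is that this is a finite product/sum so that the Jordan identities apply term by term. For the right-hand side I would similarly expand $\vf(e_x)\vf(\af'\bt')\vf(e_y)$: writing $\af'=\af'|_x^{[x,y]}+(\text{irrelevant part})$ and $\bt'=\bt'|_{[x,y]}^y+(\text{irrelevant part})$ via \cref{vf(e_X)psi(af)=vf(e_X)psi(af|_X),psi(af)vf(e_X)=psi(af|^X)vf(e_X)}, and then using $\af'_{xz}e_{xz}=(\af_{xz}e_{xz})'$ and $\bt'_{zy}e_{zy}=(\bt_{zy}e_{zy})'$ from \cref{(af_xye_xy)'=af'_xye_xy} inside \cref{'-of-restriction}, reduce to showing a term-by-term identity of the shape $\vf(e_x)\vf((\af_{xz}e_{xz})'(\bt_{zy}e_{zy})')\vf(e_y)=\vf(e_x)\vf((\af_{xz}e_{xz})(\bt_{zy}e_{zy}))\vf(e_y)$ for each $z$ with $x\le z\le y$, after which summing over the finitely many such $z$ recovers the full equality.

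So the crux is the \emph{rank-one} case: given $x<z$ (or $z$) and morphisms $a=a_{xz}e_{xz}$, $b=b_{zy}e_{zy}$ (allowing $x=z$ or $z=y$), show that $\vf(e_x)\vf(a'b')\vf(e_y)=\vf(e_x)\vf(ab)\vf(e_y)$, where $ab=a_{xz}b_{zy}e_{xy}$. Here I expect the main obstacle to lie: one must exploit the precise meaning of $'$ on rank-one elements, namely that $\vf(a')=\vf(e_x)\vf(a)\vf(e_z)$ when $x<z$ (and $\vf(a')=\vf(e_x)\vf(a)\vf(e_x)$ is the full thing when $x=z$, but then $a\in D(\cC)$, a case that needs separate care since $'$ was only defined on $FZ(\cC)$ — in fact one must first check the statement makes sense and reduce cleanly to $x<z<y$, $x<z=y$, $x=z<y$). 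The engine is the consequence of \cref{vf(rsr)} obtained by linearization, $\vf(rst+tsr)=\vf(r)\vf(s)\vf(t)+\vf(t)\vf(s)\vf(r)$, applied with $r=e_x$, $t=e_y$ (orthogonal idempotents, so $\vf(e_x)\vf(e_y)=0$ by \cref{vf(e)vf(r)=0}) and $s$ a suitable element, together with \cref{vf(r^2),vf(e)vf(r)=vf(r)vf(e)} to collapse the idempotent factors; this should let me move $\vf(e_z)$ across and identify $\vf(a')\vf(b')=\vf(e_x)\vf(a)\vf(e_z)\vf(b)\vf(e_y)$ with $\vf(e_x)\vf(ab)\vf(e_y)$ once I know $\vf(a)\vf(e_z)\vf(b)=\vf(ab)+\vf(ba)$-type information from \cref{vf(rst+tsr)}. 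I would organize this as a short preliminary computation isolating the identity $\vf(e_x)\vf(a)\vf(e_z)\vf(b)\vf(e_y)=\vf(e_x)\vf(ab)\vf(e_y)$ for rank-one $a,b$ composable at $z$, prove it directly from the linearized Jordan identities and orthogonality of $e_x,e_y,e_z$, and then assemble everything as above; the anti-homomorphism statement for $''$ follows by the mirror-image argument (swapping the roles of $x$ and $y$ throughout, using \cref{0(af_xe-e_xy)=vf(e_y)0(af)vf(e_x),vf(e_X)0(af)=vf(e_X)0(af|^X),0(af)vf(e_X)=0(af|_X)vf(e_X)}), which is exactly why one gets $(\af\bt)''=\bt''\af''$ rather than $\af''\bt''$.
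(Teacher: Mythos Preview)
Your plan is essentially the paper's proof, with only cosmetic repackaging. The paper also reduces to the rank-one identity
\[
\af'_{xz}e_{xz}\cdot\bt'_{zy}e_{zy}=(\af_{xz}e_{xz}\cdot\bt_{zy}e_{zy})'
\]
for $x<z<y$, applies $\vf$, and computes both sides to be $\vf(e_x)\vf(\af)\vf(e_z)\vf(\bt)\vf(e_y)$ via \cref{vf(rs+sr)} and the definitions \cref{defn-of-af'_xy}; this is exactly the ``engine'' you isolate at the end.

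Two small points. First, the edge cases $x=z$ and $z=y$ you worry about are vacuous: since $\af,\bt\in FZ(\cC)$ one has $\af_{xx}=0_{xx}$ and $\bt_{yy}=0_{yy}$, so the sum $(\af\bt)_{xy}=\sum_{x\le z\le y}\af_{xz}\bt_{zy}$ reduces to $\sum_{x<z<y}$ automatically. Second, to expand $\vf(e_x)\vf(\af'\bt')\vf(e_y)$ you invoke the $\psi$-restriction formulas \cref{vf(e_X)psi(af)=vf(e_X)psi(af|_X),psi(af)vf(e_X)=psi(af|^X)vf(e_X)}, but $\vf(\af'\bt')$ is not $\psi$ of anything you yet control; the clean route (which the paper takes, working in $FI(\cC)$ rather than via \cref{a=b-in-A}) is simply \cref{vf(e_x)vf(af)vf(e_y)=vf(e_x)vf(af_xye_xy)vf(e_y)} applied to $\af'\bt'$, giving $\vf(e_x)\vf((\af'\bt')_{xy}e_{xy})\vf(e_y)$, and then $(\af'\bt')_{xy}=\sum_{x<z<y}\af'_{xz}\bt'_{zy}$. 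With those tweaks your argument and the paper's coincide.
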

\begin{proof}
	It was proved in \cref{af'-and-af''-are-finitary-series} that $\af\mapsto\af'$ is additive, so it remains to show that
	\begin{align}\label{af'bt'=(af-bt)'}
		\af'\bt'=(\af\bt)'
	\end{align}
	for all $\af,\bt\in FZ(\cC)$. Clearly, both sides of \cref{af'bt'=(af-bt)'} belong to $FZ(\cC)$. Now, given $x<y$, we have
	\begin{align*}
		(\af'\bt')_{xy}e_{xy}&=\sum_{x<z<y}\af'_{xz}e_{xz}\cdot\bt'_{zy}e_{zy},\\
		(\af\bt)'_{xy}e_{xy}&=((\af\bt)_{xy}e_{xy})'=\sum_{x<z<y}(\af_{xz}e_{xz}\cdot\bt_{zy}e_{zy})',
	\end{align*}
	where the latter equality is explained by \cref{(af_xye_xy)'=af'_xye_xy} and the additivity of the map $\af\mapsto\af'$. Thus, it suffices to prove
that
	\begin{align}\label{af'_xze_xz-bt'_zye_zy=(af_xze_xz-bt_zye_zy)'}
	\af'_{xz}e_{xz}\cdot\bt'_{zy}e_{zy}=(\af_{xz}e_{xz}\cdot\bt_{zy}e_{zy})'
	\end{align}
	for all $x<z<y$. Since $\vf$ is bijective, \cref{af'_xze_xz-bt'_zye_zy=(af_xze_xz-bt_zye_zy)'} is equivalent to
	\begin{align}\label{vf(af'bt')=vf((af-bt)')}
	\vf(\af'_{xz}e_{xz}\cdot\bt'_{zy}e_{zy})=\vf((\af_{xz}e_{xz}\cdot\bt_{zy}e_{zy})').
	\end{align}
	By \cref{vf(rs+sr),af_xye_xy-cdot-bt_uve_uv,defn-of-af'_xy,vf(r^2),vf(e)vf(r)=0} we have
	\begin{align}
		\vf(\af'_{xz}e_{xz}\cdot\bt'_{zy}e_{zy})&=\vf(\af'_{xz}e_{xz})\vf(\bt'_{zy}e_{zy})+\vf(\bt'_{zy}e_{zy})\vf(\af'_{xz}e_{xz})\notag\\
		&\quad-\vf(\bt'_{zy}e_{zy}\cdot\af'_{xz}e_{xz})\notag\\
		&=\vf(e_x)\vf(\af)\vf(e_z)^2\vf(\bt)\vf(e_y)+\vf(e_z)\vf(\bt)\vf(e_y)\vf(e_x)\vf(\af)\vf(e_z)\notag\\
		&=\vf(e_x)\vf(\af)\vf(e_z)\vf(\bt)\vf(e_y).\label{vf(af'_xze_xz-bt'_zye_zy)=vf(e_x)vf(af)vf(e_z)vf(bt)vf(e_y)}
	\end{align}
	Now, by
\cref{defn-of-psi,vf(e_x)psi(af)vf(e_y)=vf(e_x)vf(af)vf(e_y),psi(af_xe-e_xy)=vf(e_x)psi(af)vf(e_y),vf(rs+sr),vf(af_xye_xy)=vf(e_x)vf(af)vf(e_y)+vf(e_y)vf(af)vf(e_x)}
we  get
	\begin{align}
		\vf((\af_{xz}e_{xz}\cdot\bt_{zy}e_{zy})')&=\psi(\af_{xz}e_{xz}\cdot\bt_{zy}e_{zy})\notag\\
		&=\psi((\af_{xz}\bt_{zy})e_{xy})\notag\\
		&=\vf(e_x)\vf((\af_{xz}\bt_{zy})e_{xy})\vf(e_y)\notag\\
		&=\vf(e_x)\vf(\af_{xz}e_{xz}\cdot\bt_{zy}e_{zy})\vf(e_y)\notag\\
		&=\vf(e_x)(\vf(\af_{xz}e_{xz})\vf(\bt_{zy}e_{zy})+\vf(\bt_{zy}e_{zy})\vf(\af_{xz}e_{xz}))\vf(e_y)\notag\\
		&\quad-\vf(e_x)\vf(\bt_{zy}e_{zy}\cdot\af_{xz}e_{xz})\vf(e_y)\notag\\
		&=\vf(e_x)\vf(\af)\vf(e_z)\vf(\bt)\vf(e_y).\label{vf((af_xze_xz-bt_zye_zy)')=vf(e_x)vf(af)vf(e_z)vf(bt)vf(e_y)}
	\end{align}
	Comparing \cref{vf(af'_xze_xz-bt'_zye_zy)=vf(e_x)vf(af)vf(e_z)vf(bt)vf(e_y),vf((af_xze_xz-bt_zye_zy)')=vf(e_x)vf(af)vf(e_z)vf(bt)vf(e_y)}, we get
the desired equality \cref{vf(af'bt')=vf((af-bt)')}.
	
	The proof of the statement about $\af\mapsto\af''$ is analogous.
\end{proof}

 \begin{prop}\label{psi-and-0-hom-and-anti-hom}
 	Let   $\vf:FI(\cC)\to A$ be a Jordan isomorphism and $\psi,\0:FZ(\cC)\to A$ as defined in \cref{defn-of-psi,defn-of-0}.  Then $\psi$ and $\0$ are a
 homomorphism and an anti-homomorphism $FZ(\cC)\to A$, respectively.
 \end{prop}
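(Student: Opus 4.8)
The plan is to prove that $\psi$ is a homomorphism; the proof that $\0$ is an anti-homomorphism is line for line the same, using the anti-homomorphism part of \cref{af'-hom-and-af''-anti-hom}, the ``strictly lower'' analogues of the facts below, and the localization formulas \cref{vf(e_X)0(af)=vf(e_X)0(af|^X),0(af)vf(e_X)=0(af|_X)vf(e_X)}, in which ``$|_X$'' and ``$|^X$'' are swapped relative to $\psi$ --- this swap being exactly what turns a homomorphism into an anti-homomorphism. Since $\psi$ is already additive, it remains to prove $\psi(\af)\psi(\bt)=\psi(\af\bt)$ for $\af,\bt\in FZ(\cC)$. By \cref{defn-of-psi} and \cref{af'-hom-and-af''-anti-hom} one has $\psi(\af\bt)=\vf((\af\bt)')=\vf(\af'\bt')$, so it is enough to show $\vf(\af')\vf(\bt')=\vf(\af'\bt')$, which I would obtain by checking the two conditions of \cref{a=b-in-A} for $a=\vf(\af')\vf(\bt')$ and $b=\vf(\af'\bt')$.

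Before that I would record some bookkeeping. From \cref{vf(e_x)psi(af)vf(e_y)=vf(e_x)vf(af)vf(e_y)} one checks that $(\af')'=\af'$, so $FZ(\cC)':=\{\gm':\gm\in FZ(\cC)\}$ is a subring of $FZ(\cC)$ that is closed under $\gm\mapsto\gm|_X$ and $\gm\mapsto\gm|^X$ (by \cref{'-of-restriction}) and on which $\vf$ restricts to $\psi$. Hence, by \cref{vf(e_x)psi(af)vf(e_y)-etc}, every $\sigma\in FZ(\cC)'$ is \emph{strictly upper}: $\vf(e_y)\vf(\sigma)\vf(e_x)=0$ whenever $x\le y$; in particular $b=\vf(\af'\bt')$ is strictly upper, so in \cref{a=b-in-A} it remains only to match $\vf(e_x)a\vf(e_y)$, $\vf(e_y)a\vf(e_x)$ (for $x<y$) and $\vf(e_x)a\vf(e_x)$ against $\vf(e_x)b\vf(e_y)$. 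Moreover \cref{vf(e_X)psi(af)-etc} (combined with $\vf|_{FZ(\cC)'}=\psi|_{FZ(\cC)'}$) yields the localization identities $\vf(e_x)\vf(\sigma)=\vf(e_x)\vf(\sigma|_x)$ and $\vf(\sigma)\vf(e_x)=\vf(\sigma|^x)\vf(e_x)$ for $\sigma\in FZ(\cC)'$, which rewrite the three pieces of $a$ using only a single row $\af'|_u$ of $\af'$ and a single column $\bt'|^v$ of $\bt'$. Since $\af',\bt'\in FZ(\cC)$, a direct computation with \cref{af_xye_xy-cdot-bt_uve_uv} gives $\af'|_x\cdot\bt'|^y=(\af\bt)'_{xy}e_{xy}$, while $\bt'|^y\cdot\af'|_x$, $\af'|_y\cdot\bt'|^x$, $\bt'|^x\cdot\af'|_y$ and $\af'|_x\cdot\bt'|^x$ all vanish (wrong order of indices), and feeding these through the Jordan identity \cref{vf(rs+sr)} converts them into relations between the corresponding products $\vf(\cdot)\vf(\cdot)$ in $A$.

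With these in hand, $\vf(e_x)a\vf(e_x)=0$ and $\vf(e_y)a\vf(e_x)=0$ (for $x<y$) follow from the vanishing products above, the fact that $\vf(e_x)\vf(\bt'|^x)=\vf(e_y)\vf(\bt'|^x)=0$ (localization once more, as $\bt|_{\{x\}}^{\{x\}}=\bt|_{\{y\}}^{\{x\}}=0$), and the strict upperness of $\vf(\bt'|^x\cdot\af'|_x)$. For the last condition, \cref{vf(rs+sr)} applied to $\af'|_x$ and $\bt'|^y$ gives $\vf(e_x)a\vf(e_y)=\vf(e_x)\vf((\af\bt)'_{xy}e_{xy})\vf(e_y)-\vf(e_x)\vf(\bt'|^y)\vf(\af'|_x)\vf(e_y)$, and the first summand equals $\vf(e_x)b\vf(e_y)$ by \cref{defn-of-psi,psi(af_xe-e_xy)=vf(e_x)psi(af)vf(e_y),vf(e_x)psi(af)vf(e_y)=vf(e_x)vf(af)vf(e_y),vf(e_x)vf(af)vf(e_y)=vf(e_x)vf(af_xye_xy)vf(e_y)}; so everything comes down to killing the cross term $\vf(e_x)\vf(\bt'|^y)\vf(\af'|_x)\vf(e_y)$, and I expect this to be the main obstacle. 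The way around it: localize once more to rewrite the cross term as $\vf(e_x)\vf(\bt'_{xy}e_{xy})\vf(\af'_{xy}e_{xy})\vf(e_y)$, then note by \cref{defn-of-af'_xy} that $\vf(\af'_{xy}e_{xy})=\vf(e_x)\vf(\af)\vf(e_y)$ (and likewise for $\bt$), so that each factor absorbs the adjacent $\vf(e_x)$ or $\vf(e_y)$, reducing the cross term to a multiple of $\vf(e_y)\vf(e_x)$, which is $0$ by \cref{vf(e)vf(r)=0} since $e_x$ and $e_y$ are orthogonal for $x\ne y$. With all conditions of \cref{a=b-in-A} verified, $\vf(\af')\vf(\bt')=\vf(\af'\bt')$, hence $\psi(\af)\psi(\bt)=\psi(\af\bt)$, and $\psi$ is a homomorphism.
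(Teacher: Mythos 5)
Your proposal is correct and follows essentially the same route as the paper's proof: reduce to $\vf(\af')\vf(\bt')=\vf((\af\bt)')$ via \cref{af'-hom-and-af''-anti-hom}, verify the conditions of \cref{a=b-in-A} using the localization formulas of \cref{vf(e_X)psi(af)-etc}, apply \cref{vf(rs+sr)} to $\af'|_x$ and $\bt'|^y$, and kill the cross term through $\vf(e_y)\vf(e_x)=0$. Your packaging via $FZ(\cC)'$ and the observation $(\af')'=\af'$ is a cosmetic reorganization of the same argument, not a different method.
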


 \begin{proof} Let $\af,\bt\in FZ(\cC)$ and $x<y$ in $\ob{\cC}$.
 By \cref{vf(e_X)psi(af)=vf(e_X)psi(af|_X),psi(af)vf(e_X)=psi(af|^X)vf(e_X),vf(rs+sr),defn-of-psi,(af|_U^V)'=af'|_U^V}
 \begin{align*}
   \vf(e_x)\psi(\af)\psi(\bt)\vf(e_y)&=\vf(e_x)\psi(\af|_x)\psi(\bt|^y)\vf(e_y)\\
   &=\vf(e_x)\vf((\af|_x)')\vf((\bt|^y)')\vf(e_y)\\
   &=\vf(e_x)\vf(\af'|_x)\vf(\bt'|^y)\vf(e_y)\\
    &=\vf(e_x)\vf(\af'|_x\bt'|^y+\bt'|^y\af'|_x)\vf(e_y)\\
    &\quad-\vf(e_x)\vf(\bt'|^y)\vf(\af'|_x)\vf(e_y)\\
    &=\vf(e_x)\vf(\af'|_x\bt'|^y)\vf(e_y),
 \end{align*}
because $\bt'|^y\af'|_x=0$ and
\begin{align*}
\vf(e_x)\vf(\bt'|^y)\vf(\af'|_x)\vf(e_y)&=\vf(e_x)\vf((\bt|^y)')\vf((\af|_x)')\vf(e_y)\\
	&=\vf(e_x)\psi(\bt|^y)\psi(\af|_x)\vf(e_y)\\
	&=\vf(e_x)\psi(\bt|_x^y)\psi(\af|_x^y)\vf(e_y)\\
	&=\vf(e_x)\psi(\bt_{xy}e_{xy})\psi(\af_{xy}e_{xy})\vf(e_y)\\
	&=\vf(e_x)\psi(\bt)\vf(e_y)\vf(e_x)\psi(\af)\vf(e_y)\\
	&=0
\end{align*}
by \cref{vf(e)vf(r)=0,psi(af_xe-e_xy)=vf(e_x)psi(af)vf(e_y)}. Now using
\cref{properties-of-af|_X^Y,defn-of-af'_xy,vf(e_x)psi(af)vf(e_y)=vf(e_x)vf(af)vf(e_y),af'-hom-and-af''-anti-hom} we get
\begin{align*}
	\vf(e_x)\vf(\af'|_x\bt'|^y)\vf(e_y)&=\vf(e_x)\vf((\af'\bt')|_x^y)\vf(e_y)\\
	&=\vf(e_x)\vf((\af\bt)'|_x^y)\vf(e_y)\\
	&=\vf(e_x)\vf((\af\bt)'_{xy}e_{xy})\vf(e_y)\\
	&=\vf(e_x)\vf(\af\bt)\vf(e_y)\\
	&=\vf(e_x)\psi(\af\bt)\vf(e_y),
\end{align*}
whence
\begin{align*}
	\vf(e_x)\psi(\af)\psi(\bt)\vf(e_y)=\vf(e_x)\psi(\af\bt)\vf(e_y).
\end{align*}
Now, by \cref{vf(e_X)psi(af)=vf(e_X)psi(af|_X),psi(af)vf(e_X)=psi(af|^X)vf(e_X),vf(rs+sr),defn-of-psi,(af|_U^V)'=af'|_U^V}
 \begin{align*}
   \vf(e_y)\psi(\af)\psi(\bt)\vf(e_x)&=\vf(e_y)\psi(\af|_y)\psi(\bt|^x)\vf(e_x)\\
   &=\vf(e_y)\vf((\af|_y)')\vf((\bt|^x)')\vf(e_x)\\
   &=\vf(e_y)\vf(\af'|_y)\vf(\bt'|^x)\vf(e_x)\\
    &=\vf(e_y)\vf(\af'|_y\bt'|^x+\bt'|^x\af'|_y)\vf(e_x)\\
    &\quad-\vf(e_y)\vf(\bt'|^x)\vf(\af'|_y)\vf(e_x)\\
    &=-\vf(e_y)\vf(\bt'|^x)\vf(\af'|_y)\vf(e_x),
 \end{align*}
because $\af'|_y\bt'|^x= \bt'|^x\af'|_y=0$, by \cref{properties-of-af|_X^Y}. But
\begin{align*}
\vf(e_y)\vf(\bt'|^x)\vf(\af'|_y)\vf(e_x)&=\vf(e_y)\vf((\bt|^x)')\vf((\af|_y)')\vf(e_x)\\
	&=\vf(e_y)\psi(\bt|^x)\psi(\af|_y)\vf(e_x)\\
	&=\vf(e_y)\psi(\bt|_y^x)\psi(\af|_y^x)\vf(e_x)\\
	&=0,
\end{align*}
because $\bt|_y^x=\af|_y^x=0$. On the other hand, $\vf(e_y)\psi(\af\bt)\vf(e_x)=0$, by
\cref{vf(e_y)psi(af)vf(e_x)=vf(e_x)0(af)vf(e_y)=vf(e_x)psi(af)vf(e_x)=vf(e_x)0(af)vf(e_x)=0}.
Hence
\begin{align*}
	\vf(e_x)\psi(\af)\psi(\bt)\vf(e_y)+\vf(e_y)\psi(\af)\psi(\bt)\vf(e_x)= \vf(e_x)\psi(\af\bt)\vf(e_y)+\vf(e_y)\psi(\af\bt)\vf(e_x).
\end{align*}

Finally, consider $x=y$. By
\cref{vf(e_X)psi(af)=vf(e_X)psi(af|_X),psi(af)vf(e_X)=psi(af|^X)vf(e_X),vf(rs+sr),defn-of-psi,properties-of-af|_X^Y,(af|_U^V)'=af'|_U^V}
 \begin{align*}
   \vf(e_x)\psi(\af)\psi(\bt)\vf(e_x)&=\vf(e_x)\psi(\af|_x)\psi(\bt|^x)\vf(e_x)\\
   &=\vf(e_x)\vf((\af|_x)')\vf((\bt|^x)')\vf(e_x)\\
   &=\vf(e_x)\vf(\af'|_x)\vf(\bt'|^x)\vf(e_x)\\
    &=\vf(e_x)\vf(\af'|_x\bt'|^x+\bt'|^x\af'|_x)\vf(e_x)\\
    &\quad-\vf(e_x)\vf(\bt'|^x)\vf(\af'|_x)\vf(e_x)\\
    &= \vf(e_x)\vf(\bt'|^x\af'|_x)\vf(e_x)\\
    &\quad-\vf(e_x)\vf(\bt'|^x)\vf(\af'|_x)\vf(e_x).
 \end{align*}
By \cref{vf(af_xxe_x)=vf(e_x)vf(af)vf(e_x)}, $\vf(e_x)\vf(\bt'|^x\af'|_x)\vf(e_x)=  \vf((\bt'|^x\af'|_x)_{xx}e_x)=0$, because $\bt'|^x\af'|_x\in
FZ(\cC)$ and using \cref{vf(e_X)psi(af)=vf(e_X)psi(af|_X),psi(af)vf(e_X)=psi(af|^X)vf(e_X),defn-of-psi,(af|_U^V)'=af'|_U^V}
\begin{align*}
\vf(e_x)\vf(\bt'|^x)\vf(\af'|_x)\vf(e_x)&=\vf(e_x)\vf((\bt|^x)')\vf((\af|_x)')\vf(e_x)\\
	&=\vf(e_x)\psi(\bt|^x)\psi(\af|_x)\vf(e_x)\\
	&=\vf(e_x)\psi(\bt|_x^x)\psi(\af|_x^x)\vf(e_x)\\
	&=0,
\end{align*}
because $\bt|_x^x=\af|_x^x = 0$.  We also have $\vf(e_x)\psi(\af\bt)\vf(e_x)=0$, by
\cref{vf(e_y)psi(af)vf(e_x)=vf(e_x)0(af)vf(e_y)=vf(e_x)psi(af)vf(e_x)=vf(e_x)0(af)vf(e_x)=0}. It follows that
\begin{align*}
	\vf(e_x)\psi(\af)\psi(\bt)\vf(e_x)=\vf(e_x)\psi(\af\bt)\vf(e_x).
\end{align*}
Therefore, by \cref{a=b-in-A}, we conclude that $\psi(\af \bt)= \psi(\af)\psi(\bt)$.

In an analogous way one proves that $\0$ is an anti-homomorphism $FZ(\cC)\to A$.
\end{proof}

\subsection{Decomposition of $\vf$ into a near-sum}

The idea now is to extend $\psi$ and $\theta$ to the whole $FI(\cC)$ in order to obtain a decomposition of $\varphi$ into a near-sum.

\begin{lem}\label{a-in-D(cC)-bt-in-FZ(cC)}
	Let $\vf:FI(\cC)\to A$ be a Jordan isomorphism and $\psi:FZ(\cC)\to A$ as defined at \cref{defn-of-psi}.  If $\alpha\in D(\cC)$ and $\beta \in
FZ(\cC)$, then
	\begin{align}\label{psi(af.bt)=vf(af)psi(bt)}
		\psi(\alpha\beta)=\vf(\alpha)\psi(\beta).
	\end{align}
\end{lem}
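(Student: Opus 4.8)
The plan is to apply \cref{a=b-in-A} to $a=\psi(\af\bt)$ and $b=\vf(\af)\psi(\bt)$. First note that $\af\bt\in FZ(\cC)$, since $FZ(\cC)$ is an ideal, so $\psi(\af\bt)$ is defined. The elementary observation that drives everything is that, because $\af\in D(\cC)$, each $e_z$ ($z\in\ob\cC$) commutes with $\af$ and $e_z\af=\af e_z=\af_{zz}e_{zz}$; hence \cref{vf(e)vf(r)=vf(r)vf(e)} gives $\vf(e_z)\vf(\af)=\vf(\af)\vf(e_z)=\vf(\af_{zz}e_{zz})$, while \cref{vf(e)vf(r)=vf(r)vf(e),vf(e)vf(r)=0} give $\vf(\af_{zz}e_{zz})\vf(e_z)=\vf(\af_{zz}e_{zz})$ and $\vf(\af_{zz}e_{zz})\vf(e_w)=0$ for $w\ne z$ (using that $\af_{zz}e_{zz}$ and $e_w$ are orthogonal).

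With this in hand, the diagonal condition and the ``reversed'' off-diagonal term in \cref{a=b-in-A} are immediate. Indeed, $\vf(e_x)\psi(\af\bt)\vf(e_x)=0$ and $\vf(e_y)\psi(\af\bt)\vf(e_x)=0$ (for $x<y$) by \cref{vf(e_y)psi(af)vf(e_x)=vf(e_x)0(af)vf(e_y)=vf(e_x)psi(af)vf(e_x)=vf(e_x)0(af)vf(e_x)=0}, since $\af\bt\in FZ(\cC)$; on the other side, $\vf(e_x)\vf(\af)\psi(\bt)\vf(e_x)=\vf(\af_{xx}e_{xx})\vf(e_x)\psi(\bt)\vf(e_x)=0$ and $\vf(e_y)\vf(\af)\psi(\bt)\vf(e_x)=\vf(\af_{yy}e_{yy})\vf(e_y)\psi(\bt)\vf(e_x)=0$, again by \cref{vf(e_y)psi(af)vf(e_x)=vf(e_x)0(af)vf(e_y)=vf(e_x)psi(af)vf(e_x)=vf(e_x)0(af)vf(e_x)=0} together with the observation above.

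It remains to verify $\vf(e_x)\psi(\af\bt)\vf(e_y)=\vf(e_x)\vf(\af)\psi(\bt)\vf(e_y)$ for $x<y$. On the left, \cref{vf(e_x)psi(af)vf(e_y)=vf(e_x)vf(af)vf(e_y),vf(e_x)vf(af)vf(e_y)=vf(e_x)vf(af_xye_xy)vf(e_y)} rewrite this as $\vf(e_x)\vf((\af\bt)_{xy}e_{xy})\vf(e_y)$, and since $\af$ is diagonal one has $(\af\bt)_{xy}e_{xy}=(\af_{xx}e_{xx})(\bt_{xy}e_{xy})$. I would then apply \cref{vf(rs+sr)} to $r=\af_{xx}e_{xx}$ and $s=\bt_{xy}e_{xy}$, using \cref{af_xye_xy-cdot-bt_uve_uv} and $x\ne y$ to see that $sr=0$, so that $\vf(rs)=\vf(r)\vf(s)+\vf(s)\vf(r)$; sandwiching by $\vf(e_x)$ and $\vf(e_y)$, the observation kills the $\vf(s)\vf(r)$ summand (because $\vf(\af_{xx}e_{xx})\vf(e_y)=0$) and replaces $\vf(e_x)\vf(r)$ by $\vf(r)$, leaving $\vf(\af_{xx}e_{xx})\vf(\bt_{xy}e_{xy})\vf(e_y)$. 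On the right, $\vf(e_x)\vf(\af)\psi(\bt)\vf(e_y)=\vf(\af_{xx}e_{xx})\vf(e_x)\psi(\bt)\vf(e_y)=\vf(\af_{xx}e_{xx})\vf(e_x)\vf(\bt_{xy}e_{xy})\vf(e_y)=\vf(\af_{xx}e_{xx})\vf(\bt_{xy}e_{xy})\vf(e_y)$ by the observation and \cref{vf(e_x)psi(af)vf(e_y)=vf(e_x)vf(af)vf(e_y),vf(e_x)vf(af)vf(e_y)=vf(e_x)vf(af_xye_xy)vf(e_y)}. The two sides coincide, and \cref{a=b-in-A} yields \cref{psi(af.bt)=vf(af)psi(bt)}.

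I expect the only delicate point to be the bookkeeping: correctly tracking which of $e_x,e_y$ is orthogonal to $\af_{xx}e_{xx}$ and which merely absorbs into it, and checking $sr=0$ in the application of \cref{vf(rs+sr)}. Everything else is routine manipulation with \cref{vf(e)vf(r)=vf(r)vf(e),vf(e)vf(r)=0} and the identities collected in \cref{vf(e_x)psi(af)vf(e_y)-etc}.
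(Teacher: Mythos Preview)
Your proposal is correct and follows essentially the same strategy as the paper: apply \cref{a=b-in-A}, kill the diagonal and the ``reversed'' off-diagonal terms via \cref{vf(e_y)psi(af)vf(e_x)=vf(e_x)0(af)vf(e_y)=vf(e_x)psi(af)vf(e_x)=vf(e_x)0(af)vf(e_x)=0} and the commutation of $e_z$ with $\af\in D(\cC)$, and handle the remaining term with \cref{vf(rs+sr)} applied to $r=\af_{xx}e_{xx}$. The only difference is that you take $s=\bt_{xy}e_{xy}$ (so $sr=0$ is immediate and the computation finishes directly), whereas the paper takes $s=\bt$ and then invokes the restriction machinery of \cref{vf(e_X)vf(af)vf(e_Y)-in-terms-of-af|_X^Y} to reduce $\af_{xx}e_{xx}\bt+\bt\af_{xx}e_{xx}$ to $\af_{xx}\bt_{xy}e_{xy}$; your route is slightly more economical.
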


\begin{proof}
	We will use  \cref{a=b-in-A}. Let $x<y$. Since $\alpha\beta\in FZ(\cC)$, we have
	\begin{align}
		\vf(e_x)\psi(\af\bt)\vf(e_y)+\vf(e_y)\psi(\af\bt)\vf(e_x) & =
\vf(e_x)\vf(\af\bt)\vf(e_y)\label{vf(e_x)psi(af-bt)vf(e_y)+vf(e_y)psi(af-bt)vf(e_x)=vf(e_x)vf(af-bt)vf(e_y)} \\
		&=\vf(e_x)\vf((\af\bt)_{xy}e_{xy})\vf(e_y)\label{vf(e_x)vf(af-bt)vf(e_y)=vf(e_x)vf((af-bt)_xye_xy)vf(e_y)} \\
&=\vf(e_x)\vf(\af_{xx}\bt_{xy}e_{xy})\vf(e_y), \label{vf(e_x)psi(af bt)vf(e_y)+vf(e_y)psi(afbt)vf(e_x)}
	\end{align}
	where equality \cref{vf(e_x)psi(af-bt)vf(e_y)+vf(e_y)psi(af-bt)vf(e_x)=vf(e_x)vf(af-bt)vf(e_y)} follows from
\cref{vf(e_x)psi(af)vf(e_y)=vf(e_x)vf(af)vf(e_y),vf(e_y)psi(af)vf(e_x)=vf(e_x)0(af)vf(e_y)=vf(e_x)psi(af)vf(e_x)=vf(e_x)0(af)vf(e_x)=0}, equality
\cref{vf(e_x)vf(af-bt)vf(e_y)=vf(e_x)vf((af-bt)_xye_xy)vf(e_y)} follows from \cref{vf(e_x)vf(af)vf(e_y)=vf(e_x)vf(af_xye_xy)vf(e_y)}, and
\cref{vf(e_x)psi(af bt)vf(e_y)+vf(e_y)psi(afbt)vf(e_x)} uses the fact that $\af\in D(\cC)$.
	
	On the other hand, since $e_y\af=\af e_y$, by \cref{vf(e)vf(r)=vf(r)vf(e)} we have
	\begin{equation}
	\vf(e_y)\vf(\af)\psi(\bt)\vf(e_x)=\vf(\af)\vf(e_y)\psi(\bt)\vf(e_x)=0, \label{vf(e_y)vf(af)psi(bt)vf(e_x)=vf(af)vf(e_y)psi(bt)vf(e_x)=0}
	\end{equation}
	where the last equality follows from \cref{vf(e_y)psi(af)vf(e_x)=vf(e_x)0(af)vf(e_y)=vf(e_x)psi(af)vf(e_x)=vf(e_x)0(af)vf(e_x)=0}.  Furthermore, by
	\cref{vf(r^2),vf(e)vf(r)=vf(r)vf(e),vf(af_xxe_x)=vf(e_x)vf(af)vf(e_x),vf(e_x)psi(af)vf(e_y)=vf(e_x)vf(af)vf(e_y)}
	\begin{align}
		\vf(e_x)\vf(\af)\psi(\bt)\vf(e_y) &= \vf(e_x) \vf(\af) \vf(e_x) \psi(\bt)\vf(e_y)=\vf(e_x)\vf(\af_{xx}e_{xx})
\vf(\bt)\vf(e_y).\label{vf(e_x)psi(af)psi(bt)vf(e_y)=vf(af_xx.e_xx)vf(bt)vf(e_y)}
	\end{align}
	Now by \cref{vf(rs+sr)}
	\begin{align*}
		\vf(\af_{xx}e_{xx})\vf(\bt)=\vf(\af_{xx}e_{xx}\bt+\bt\af_{xx}e_{xx})-\vf(\bt)\vf(\af_{xx}e_{xx}).
	\end{align*}
	Since $\vf(\af_{xx}e_{xx})\vf(e_y)=0$ in view of \cref{vf(e)vf(r)=0}, we obtain from
\cref{vf(e_x)psi(af)psi(bt)vf(e_y)=vf(af_xx.e_xx)vf(bt)vf(e_y),vf(e_x)psi(af)vf(e_y)=vf(e_x)vf(af)vf(e_y),vf(e_X)psi(af)vf(e_Y)=vf(e_X)psi(af|_X^Y)vf(e_Y)}
	\begin{align*}
		\vf(e_x)\vf(\af)\psi(\bt)\vf(e_y)&=\vf(e_x)\vf(\af_{xx}e_{xx}\bt+\bt\af_{xx}e_{xx})\vf(e_y)\\
		&=\vf(e_x)\psi(\af_{xx}e_{xx}\bt+\bt\af_{xx}e_{xx})\vf(e_y)\\
		&=\vf(e_x)\psi((\af_{xx}e_{xx}\bt+\bt\af_{xx}e_{xx})|_x^y)\vf(e_y)\\
		&=\vf(e_x)\psi(\af_{xx}\bt_{xy}e_{xy})\vf(e_y)\\
		&=\vf(e_x)\vf(\af_{xx}\bt_{xy}e_{xy})\vf(e_y).
	\end{align*}
	Combining this with \cref{vf(e_x)psi(af bt)vf(e_y)+vf(e_y)psi(afbt)vf(e_x),vf(e_y)vf(af)psi(bt)vf(e_x)=vf(af)vf(e_y)psi(bt)vf(e_x)=0} we have
	$$
	\vf(e_x)\psi(\af\bt)\vf(e_y)+\vf(e_y)\psi(\af\bt)\vf(e_x)= \vf(e_x)\vf(\af)\psi(\bt)\vf(e_y) + \vf(e_y)\vf(\af)\psi(\bt)\vf(e_x).
	$$
	Moreover, for any $x\in \ob{\cC}$
	$$
	\vf(e_x)\vf(\af)\psi(\bt)\vf(e_x) = \vf(\af)\vf(e_x)\psi(\bt)\vf(e_x)=0.
	$$
	The last equality follows from \cref{vf(e_y)psi(af)vf(e_x)=vf(e_x)0(af)vf(e_y)=vf(e_x)psi(af)vf(e_x)=vf(e_x)0(af)vf(e_x)=0}, as well as the equality
$\vf(e_x)\psi(\af \bt)\vf(e_x)=0$. Hence \cref{psi(af.bt)=vf(af)psi(bt)} holds by \cref{a=b-in-A}.
\end{proof}

\begin{rem}\label{af-in-FZ(cC)-bt-in-D(cC)}
	By a similar computation one proves that if $\alpha\in FZ(\cC)$ and $\beta \in D(\cC)$, then $\psi(\alpha\beta)=\psi(\alpha)\vf(\beta)$.
\end{rem}

Let us now extend $\psi$ and $\0$ to the additive maps $\tl\psi$ and $\tl\0$ defined on the whole ring $FI(\cC)$ by means of
\begin{align}
	\tl\psi(\af)&=\vf(\af_D)+\psi(\af_Z),\label{tl-psi(af)=vf(af_D)+psi(af_Z)}\\
	\tl\0(\af)&=\vf(\af_D)+\0(\af_Z).\label{tl-0(af)=vf(af_D)+0(af_Z)}
\end{align}

The following theorem is the main result of \cref{sec-jiso-FI(C)}.

\begin{thrm}\label{vf-near-sum-of-tl-psi-and-tl-theta}
	Let $\vf:FI(\cC)\to A$ be a Jordan isomorphism and $\tl\psi,\tl\theta:FI(\cC)\to A$ as defined in
\cref{tl-psi(af)=vf(af_D)+psi(af_Z),tl-0(af)=vf(af_D)+0(af_Z)}. Then $\vf$ is the near sum of $\tl\psi$ and $\tl\0$ with respect to $D(\cC)$ and
$FZ(\cC)$. Moreover,
	\begin{enumerate}
		\item $\tl\psi$ is a homomorphism if and only if $\vf|_{D(\cC)}$ is a homomorphism;\label{tl-psi-homo<=>vf|_D-homo}
		\item $\tl\theta$ is an anti-homomorphism if and only if $\vf|_{D(\cC)}$ is an anti-homomorphism.\label{tl-0-homo<=>vf|_D-anti-homo}
	\end{enumerate}
	In particular, if both \cref{tl-psi-homo<=>vf|_D-homo,tl-0-homo<=>vf|_D-anti-homo} are true, then $D(\cC)$ is a commutative ring.
\end{thrm}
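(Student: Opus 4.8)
The plan is to repackage the data already assembled into the near-sum statement, dispose of the two equivalences, and read off commutativity; throughout write $\af=\af_D+\af_Z$ with $\af_D\in D(\cC)$, $\af_Z\in FZ(\cC)$. From \cref{tl-psi(af)=vf(af_D)+psi(af_Z),tl-0(af)=vf(af_D)+0(af_Z)}, the maps $\tl\psi,\tl\0$ are additive, $\tl\psi|_{D(\cC)}=\tl\0|_{D(\cC)}=\vf|_{D(\cC)}$, $\tl\psi|_{FZ(\cC)}=\psi$, $\tl\0|_{FZ(\cC)}=\0$, so $\vf|_{FZ(\cC)}=\tl\psi|_{FZ(\cC)}+\tl\0|_{FZ(\cC)}$ by \cref{vf|_FZ-is-sum}, and $\tl\psi(\delta)=\tl\0(\delta)=\vf(\delta)=1_A$. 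Hence, by the definition of near-sum recalled in \cref{jordan-homo}, the first assertion reduces to the orthogonality $\psi(\af)\0(\bt)=\0(\bt)\psi(\af)=0$ for all $\af,\bt\in FZ(\cC)$.

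To get this, I would first note that $(\af')''=0$ and $(\bt'')'=0$: by \cref{defn-of-af'_xy,defn-of-af''_xy,defn-of-psi,defn-of-0} these entries come from $\vf(e_y)\psi(\af)\vf(e_x)$ and $\vf(e_x)\0(\bt)\vf(e_y)$, which vanish by \cref{vf(e_y)psi(af)vf(e_x)=vf(e_x)0(af)vf(e_y)=vf(e_x)psi(af)vf(e_x)=vf(e_x)0(af)vf(e_x)=0}. Since $\af\mapsto\af'$ is a homomorphism and $\af\mapsto\af''$ an anti-homomorphism of $FZ(\cC)$ (\cref{af'-hom-and-af''-anti-hom}), and $\gm=\gm'+\gm''$ for $\gm\in FZ(\cC)$ (\cref{af=af'+af''}), we obtain $\af'\bt''=(\af')'(\bt'')'+(\bt'')''(\af')''=0$ and, symmetrically, $\bt''\af'=0$; thus $\af'\bt''+\bt''\af'=0$ and \cref{vf(rs+sr)} gives $\psi(\af)\0(\bt)=-\0(\bt)\psi(\af)$, leaving only $\psi(\af)\0(\bt)=0$ to prove. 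This I would do via \cref{a=b-in-A}: for $x\le y$ rewrite $\vf(e_x)\psi(\af)=\vf(e_x)\vf(\af'|_x)$ and $\0(\bt)\vf(e_y)=\vf(\bt''|_y)\vf(e_y)$ using \cref{vf(e_X)psi(af)=vf(e_X)psi(af|_X),0(af)vf(e_X)=0(af|_X)vf(e_X)} and \cref{(af|_U^V)'=af'|_U^V,(af|_U^V)''=af''|_U^V}; a support check shows $\af'|_x\bt''|_y\in FZ(\cC)$ has zero $(x,y)$-entry while $\bt''|_y\af'|_x=0$, so by \cref{vf(rs+sr)} one has $\vf(\af'|_x)\vf(\bt''|_y)=\vf(\af'|_x\bt''|_y)-\vf(\bt''|_y)\vf(\af'|_x)$, and after sandwiching by $\vf(e_x),\vf(e_y)$ the first term dies by \cref{defn-of-af'_xy} (it is $\vf$ of a series with zero $(x,y)$-entry) and the second because $\vf(e_x)\vf(\bt''|_y)=0$ by \cref{vf(e_X)0(af)=vf(e_X)0(af|^X)}, \cref{(af|_U^V)''=af''|_U^V} and supports; the companion identities $\vf(e_y)\psi(\af)\0(\bt)\vf(e_x)=0$ ($x<y$) and $\vf(e_x)\psi(\af)\0(\bt)\vf(e_x)=0$ follow by the same bookkeeping, now also using \cref{defn-of-af''_xy,psi(af)vf(e_X)=psi(af|^X)vf(e_X)}. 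Thus $\psi(\af)\0(\bt)=0$, so $\0(\bt)\psi(\af)=0$, and $\vf$ is the near-sum of $\tl\psi$ and $\tl\0$ with respect to $D(\cC)$ and $FZ(\cC)$.

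For \cref{tl-psi-homo<=>vf|_D-homo}: the forward direction holds because $\tl\psi|_{D(\cC)}=\vf|_{D(\cC)}$ and restrictions of homomorphisms to subrings are homomorphisms; conversely, if $\vf|_{D(\cC)}$ is a homomorphism, then (using that $FZ(\cC)$ is an ideal) $(\af\bt)_D=\af_D\bt_D$ and $(\af\bt)_Z=\af_D\bt_Z+\af_Z\bt_D+\af_Z\bt_Z$, hence
\[
\tl\psi(\af\bt)=\vf(\af_D\bt_D)+\psi(\af_D\bt_Z)+\psi(\af_Z\bt_D)+\psi(\af_Z\bt_Z),
\]
and applying the hypothesis together with \cref{a-in-D(cC)-bt-in-FZ(cC)}, \cref{af-in-FZ(cC)-bt-in-D(cC)} and \cref{psi-and-0-hom-and-anti-hom} to the four terms, the right-hand side equals $(\vf(\af_D)+\psi(\af_Z))(\vf(\bt_D)+\psi(\bt_Z))=\tl\psi(\af)\tl\psi(\bt)$, so (as $\tl\psi$ is additive and unital) $\tl\psi$ is a homomorphism. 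For \cref{tl-0-homo<=>vf|_D-anti-homo} I would pass to the opposite ring: the identities defining a Jordan homomorphism are invariant under reversing the product, so $\vf$ is also a Jordan isomorphism of $FI(\cC)$ onto $A^{\mathrm{op}}$, and reading \cref{defn-of-af'_xy,defn-of-af''_xy} in $A^{\mathrm{op}}$ simply interchanges them; thus the maps attached to $\vf\colon FI(\cC)\to A^{\mathrm{op}}$ are $\psi,\0,\tl\psi,\tl\0$ with $\psi\leftrightarrow\0$ and $\tl\psi\leftrightarrow\tl\0$ interchanged, and since a homomorphism into $A^{\mathrm{op}}$ is exactly an anti-homomorphism into $A$, \cref{tl-0-homo<=>vf|_D-anti-homo} is nothing but \cref{tl-psi-homo<=>vf|_D-homo} applied to $\vf\colon FI(\cC)\to A^{\mathrm{op}}$. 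Finally, if both hold, then $\vf|_{D(\cC)}$ is simultaneously a homomorphism and an anti-homomorphism, so $\vf(\af\bt)=\vf(\af)\vf(\bt)=\vf(\bt)\vf(\af)=\vf(\bt\af)$ for $\af,\bt\in D(\cC)$, and injectivity of $\vf$ forces $\af\bt=\bt\af$; that is, $D(\cC)$ is commutative.

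The only genuinely delicate step is the orthogonality $\psi(\af)\0(\bt)=\0(\bt)\psi(\af)=0$ underlying the near-sum: although $\af'\bt''=\bt''\af'=0$ already in $FI(\cC)$, this does not by itself yield $\vf(\af')\vf(\bt'')=0$, so one is forced into the coordinatewise argument of \cref{a=b-in-A}, where the care lies entirely in tracking the supports of $\af'|_x$, $\bt''|_y$ and invoking the one-sided reduction formulas of \cref{vf(e_X)psi(af)-etc}; the rest is routine manipulation with the lemmas already at hand.
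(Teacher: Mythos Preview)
Your argument is correct and follows the same overall architecture as the paper: verify the near-sum axioms by proving the orthogonality $\psi(\af)\0(\bt)=\0(\bt)\psi(\af)=0$ via \cref{a=b-in-A}, then handle \cref{tl-psi-homo<=>vf|_D-homo} by expanding $\af\bt$ along $D(\cC)\oplus FZ(\cC)$ and invoking \cref{psi-and-0-hom-and-anti-hom}, \cref{a-in-D(cC)-bt-in-FZ(cC)} and \cref{af-in-FZ(cC)-bt-in-D(cC)}.

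That said, two of your shortcuts are genuinely cleaner than what the paper does. First, your observation that $(\af')''=0=(\bt'')'$, combined with \cref{af'-hom-and-af''-anti-hom} and \cref{af=af'+af''}, gives $\af'\bt''=\bt''\af'=0$ in $FI(\cC)$ and hence $\psi(\af)\0(\bt)=-\0(\bt)\psi(\af)$ via \cref{vf(rs+sr)}; this halves the coordinatewise work, whereas the paper treats the second orthogonality as ``analogous''. Second, inside the \cref{a=b-in-A} computation you kill $\vf(e_x)\vf(\af'|_x\bt''|_y)\vf(e_y)$ in one stroke by noting that $(\af'|_x\bt''|_y)_{xy}=0$ (the correct reference here is \cref{vf(e_x)vf(af)vf(e_y)=vf(e_x)vf(af_xye_xy)vf(e_y)} rather than \cref{defn-of-af'_xy}); the paper instead unwinds this term through a longer chain that rewrites $(\bt|_y)''=\bt|_y-(\bt|_y)'$ and passes back through $\psi$. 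Your opposite-ring trick for \cref{tl-0-homo<=>vf|_D-anti-homo} is also a legitimate and tidy replacement for the paper's ``similarly''. The commutativity of $D(\cC)$ you spell out explicitly; the paper leaves it implicit.
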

\begin{proof}
Clearly  $\tl\psi$ and $\tl\0$ are additive maps such that $\vf|_{D(\cC)}= \tl\psi|_{D(\cC)}=\tl\0|_{D(\cC)}$ and, by \cref{vf|_FZ-is-sum},
$\vf|_{FZ(\cC)}=\tl\psi|_{FZ(\cC)}+\tl\0|_{FZ(\cC)}$.

It remains to prove that, given $\af,\bt\in FZ(\cC)$, the products $\tl\psi(\af)\tl\0(\bt)$ and $\tl\0(\bt)\tl\psi(\af)$ are zero. We shall show that
$\tl\psi(\af)\tl\0(\bt)=0$, leaving the proof of $\tl\0(\bt)\tl\psi(\af)=0$, which is analogous, to the reader.

Let $\af,\bt\in FZ(\cC)$. Then $\tl\psi(\af)=\psi(\af)$ and $\tl\0(\bt)=\0(\bt)$. So we need to prove that $\psi(\af)\0(\bt)=0$.  Our main tool will be
\cref{a=b-in-A}. Taking an arbitrary $x\in\ob{\cC}$, we have by
\cref{vf(e_X)psi(af)=vf(e_X)psi(af|_X),0(af)vf(e_X)=0(af|_X)vf(e_X),defn-of-psi,defn-of-0}
\begin{align}\label{vf(e_x)psi(af)0(bt)vf(e_x)=vf(e_x)psi(af|_x)0(bt|_x)vf(e_x)}
	\vf(e_x)\psi(\af)\0(\bt)\vf(e_x)=\vf(e_x)\psi(\af|_x)\0(\bt|_x)\vf(e_x)=\vf(e_x)\vf((\af|_x)')\vf((\bt|_x)'')\vf(e_x).
\end{align}
But
\begin{align*}
	\vf((\af|_x)')\vf((\bt|_x)'')=\vf((\af|_x)'(\bt|_x)''+(\bt|_x)''(\af|_x)')-\vf((\bt|_x)'')\vf((\af|_x)')
\end{align*}
thanks to \cref{vf(rs+sr)}, and
\begin{align*}
	\vf(e_x)\vf((\af|_x)'(\bt|_x)'')\vf(e_x)= 0 = \vf(e_x)\vf((\bt|_x)''(\af|_x)')\vf(e_x)
\end{align*}
in view of \cref{vf(af_xxe_x)=vf(e_x)vf(af)vf(e_x)} and the fact that $(\af|_x)',(\bt|_x)''\in FZ(\cC)$. Therefore,
\cref{vf(e_x)psi(af)0(bt)vf(e_x)=vf(e_x)psi(af|_x)0(bt|_x)vf(e_x)} equals
\begin{align*}
	-\vf(e_x)\vf((\bt|_x)'')\vf((\af|_x)')\vf(e_x)&=-\vf(e_x)\0(\bt|_x)\psi(\af|_x)\vf(e_x)\\
	&=-\vf(e_x)\0(\bt|_x^x)\psi(\af|_x^x)\vf(e_x)\\
	&=0.
\end{align*}
Here we have used \cref{defn-of-psi,defn-of-0,vf(e_X)0(af)=vf(e_X)0(af|^X),psi(af)vf(e_X)=psi(af|^X)vf(e_X)} and the easy observations that
$\af|_x^x=\af_{xx}e_{xx}=0$ and similarly $\bt|_x^x=0$. Now take $x<y$ and notice from
\cref{vf(rs+sr),vf(e_X)psi(af)=vf(e_X)psi(af|_X),0(af)vf(e_X)=0(af|_X)vf(e_X)} that
\begin{align}
	\vf(e_x)\psi(\af)\0(\bt)\vf(e_y)&=\vf(e_x)\psi(\af|_x)\0(\bt|_y)\vf(e_y)\notag\\
	&=\vf(e_x)\vf((\af|_x)')\vf((\bt|_y)'')\vf(e_y)\notag\\
	&=\vf(e_x)\vf((\af|_x)'(\bt|_y)''+(\bt|_y)''(\af|_x)')\vf(e_y)\label{vf(e_x)vf((af|_x)'(bt|_y)''+(bt|_y)''(af|_x)')vf(e_y)}\\
	&\quad-\vf(e_x)\vf((\bt|_y)'')\vf((\af|_x)')\vf(e_y).\label{-vf(e_x)vf((bt|_y)'')vf((af|_x)')vf(e_y)}
\end{align}
The summand \cref{-vf(e_x)vf((bt|_y)'')vf((af|_x)')vf(e_y)} is
\begin{align*}
	-\vf(e_x)\0(\bt|_y)\psi(\af|_x)\vf(e_y)=-\vf(e_x)\0(\bt|_y^x)\psi(\af|_x^y)\vf(e_y)=0
\end{align*}
in view of \cref{defn-of-psi,defn-of-0,vf(e_X)0(af)=vf(e_X)0(af|^X),psi(af)vf(e_X)=psi(af|^X)vf(e_X)} and the fact that $\bt|_y^x=0$. Furthermore, use
\cref{(af-bt)|_X^Y=af|_X-bt|^Y,(af+bt)|_X^Y=af|_X^Y+bt|_X^Y,(af|_X^Y)_U^V} of \cref{properties-of-af|_X^Y},
\cref{af=af'+af'',vf(e_x)psi(af)vf(e_y)=vf(e_x)vf(af)vf(e_y),vf(e_X)psi(af)vf(e_Y)=vf(e_X)psi(af|_X^Y)vf(e_Y)} to transform
\cref{vf(e_x)vf((af|_x)'(bt|_y)''+(bt|_y)''(af|_x)')vf(e_y)} into
\begin{align*}
	&\vf(e_x)\psi((\af|_x)'(\bt|_y)''+(\bt|_y)''(\af|_x)')\vf(e_y)\\
	&\quad=\vf(e_x)\psi(((\af|_x)'(\bt|_y)''+(\bt|_y)''(\af|_x)')|_x^y)\vf(e_y)\\
	&\quad=\vf(e_x)\psi((\af|_x)'|_x(\bt|_y)''|^y+(\bt|_y)''|_x(\af|_x)'|^y)\vf(e_y)\\
	&\quad=\vf(e_x)\psi((\af|_x)'|_x(\bt|_y-(\bt|_y)')|^y+(\bt|_y-(\bt|_y)')|_x(\af|_x)'|^y)\vf(e_y)\\
	&\quad=-\vf(e_x)\psi((\af|_x)'|_x(\bt|_y)'|^y+(\bt|_y)'|_x(\af|_x)'|^y)\vf(e_y)\\
	&\quad=-\vf(e_x)\psi(((\af|_x)'(\bt|_y)'+(\bt|_y)'(\af|_x)')|_x^y)\vf(e_y)\\
	&\quad=-\vf(e_x)\psi((\af|_x)'(\bt|_y)'+(\bt|_y)'(\af|_x)'))\vf(e_y)\\
	&\quad=-\vf(e_x)\vf((\af|_x)'(\bt|_y)'+(\bt|_y)'(\af|_x)')\vf(e_y).
\end{align*}
In view of \cref{vf(rs+sr),defn-of-psi,defn-of-0,vf(e_X)psi(af)=vf(e_X)psi(af|_X),psi(af)vf(e_X)=psi(af|^X)vf(e_X)} the latter is
\begin{align*}
	&-\vf(e_x)\vf((\af|_x)')\vf((\bt|_y)')\vf(e_y)-\vf(e_x)\vf((\bt|_y)')\vf((\af|_x)')\vf(e_y)\\
	&\quad=-\vf(e_x)\psi(\af|_x)\psi(\bt|_y)\vf(e_y)-\vf(e_x)\psi(\bt|_y)\psi(\af|_x)\vf(e_y)\\
	&\quad=-\vf(e_x)\psi(\af|_x)\psi(\bt|_y^y)\vf(e_y)-\vf(e_x)\psi((\bt|_y)|_x)\psi(\af|_x)\vf(e_y)\\
	&\quad=0,
\end{align*}
as $\bt|_y^y = 0 =(\bt|_y)|_x$. Consequently, $\vf(e_x)\psi(\af)\0(\bt)\vf(e_y)=0$. The argument that proves $\vf(e_y)\psi(\af)\0(\bt)\vf(e_x)=0$ is
totally symmetric (just switch $x$ and $y$ in the proof above). Thus,
\begin{align*}
	\vf(e_x)\psi(\af)\0(\bt)\vf(e_y)+\vf(e_y)\psi(\af)\0(\bt)\vf(e_x)=0
\end{align*}
for all $x<y$. By \cref{a=b-in-A} the product $\psi(\af)\0(\bt)$ is zero.
	
For the second statement of the theorem take $\af,\bt\in FI(\cC)$. Write $\af=\af_D+\af_Z$ and $\bt=\bt_D+\bt_Z$. Then
$$
\af\bt=\af_D\bt_D + \af_D\bt_Z + \af_Z\bt_D + \af_Z\bt_Z.
$$
Since $D(\cC)$ is a subring of $FI(\cC)$ and $FZ(\cC)$ is an ideal of $FI(\cC)$, we have that $\af_D\bt_D\in D(\cC)$  and
$\af_D\bt_Z,\af_Z\bt_D,\af_Z\bt_Z\in FZ(\cC)$. If $\tl\psi$ is a homomorphism, then, obviously, $\tl\psi|_{D(\cC)}=\vf|_{D(\cC)}$ is a homomorphism.
Conversely, combining the fact that $\tl\psi|_{D(\cC)}=\vf|_{D(\cC)}$ is a homomorphism with
\cref{psi-and-0-hom-and-anti-hom,a-in-D(cC)-bt-in-FZ(cC),af-in-FZ(cC)-bt-in-D(cC)}, one can show that $\tl\psi(\af\bt)=\tl\psi(\af)\tl\psi(\bt)$, that
is, $\tl\psi$ is a homomorphism. This proves \cref{tl-psi-homo<=>vf|_D-homo}. The proof of \cref{tl-0-homo<=>vf|_D-anti-homo} is similar.
\end{proof}

As a consequence, we obtain~\cite[Theorem~3.13]{BFK} without using the results of~\cite{Akkurts-Barker} and thus without the restriction that $R$ is $2$-torsionfree.
\begin{cor}\label{vf-near-sum-for-FI(P.R)}
Let $P$ be a poset, $R$ a commutative ring and $A$ an $R$-algebra. Then each $R$-linear Jordan isomorphism $\vf:FI(P,R)\to A$ is the near sum of a homomorphism
and an anti-homomorphism with respect to $D(P,R)$ and $FZ(P,R)$.
\end{cor}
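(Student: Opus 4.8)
The plan is to read the corollary off \cref{vf-near-sum-of-tl-psi-and-tl-theta}. First I would note that, $P$ being a poset, the relation $\sim$ on $P$ is trivial, so each class $\bar x$ is a singleton, $\ob\cC(P,R)$ is $P$ with its partial order, and $\Mor{x,y}\cong R$ with composition being multiplication in $R$; hence $D(P,R)=\prod_{x\in P}Re_{xx}$ is a \emph{commutative} ring. By \cref{vf-near-sum-of-tl-psi-and-tl-theta}, $\vf$ is the near sum of $\tl\psi$ and $\tl\0$ with respect to $D(P,R)$ and $FZ(P,R)$, and by \cref{tl-psi-homo<=>vf|_D-homo,tl-0-homo<=>vf|_D-anti-homo} of that theorem, $\tl\psi$ (resp.\ $\tl\0$) is a homomorphism (resp.\ an anti-homomorphism) if and only if $\vf|_{D(P,R)}$ is a homomorphism (resp.\ an anti-homomorphism); moreover, since $D(P,R)$ is commutative, $\vf|_{D(P,R)}$ is a homomorphism if and only if it is an anti-homomorphism. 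Hence it suffices to prove that $\vf|_{D(P,R)}$ is a homomorphism.

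For that I would fix $\af,\bt\in D(P,R)$ and check $\vf(\af\bt)=\vf(\af)\vf(\bt)$ by \cref{a=b-in-A}. For $x<y$, since $\af\bt$ is diagonal, \cref{vf(e_x)vf(af)vf(e_y)=vf(e_x)vf(af_xye_xy)vf(e_y)} and its companion give $\vf(e_x)\vf(\af\bt)\vf(e_y)=0=\vf(e_y)\vf(\af\bt)\vf(e_x)$, while on the other side one uses $e_y\bt=\bt e_y$ and \cref{vf(e)vf(r)=vf(r)vf(e)} to rewrite $\vf(e_x)\vf(\af)\vf(\bt)\vf(e_y)$ as $\vf(e_x)\vf(\af)\vf(e_y)\cdot\vf(\bt_{yy}e_{yy})$, which is $0$ because $\vf(e_x)\vf(\af)\vf(e_y)=\vf(e_x)\vf(\af_{xy}e_{xy})\vf(e_y)=0$; symmetrically $\vf(e_y)\vf(\af)\vf(\bt)\vf(e_x)=0$. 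For $x=y$, \cref{vf(af_xxe_x)=vf(e_x)vf(af)vf(e_x)} gives $\vf(e_x)\vf(\af\bt)\vf(e_x)=\vf(\af_{xx}\bt_{xx}e_{xx})$, whereas \cref{vf(e)vf(r)=vf(r)vf(e)} gives $\vf(e_x)\vf(\af)\vf(\bt)\vf(e_x)=\vf(e_x\af)\vf(\bt e_x)=\vf(\af_{xx}e_{xx})\vf(\bt_{xx}e_{xx})$. Here $R$-linearity of $\vf$ is crucial: $\af_{xx}e_{xx}=\af_{xx}\cdot e_{xx}$ is the scalar multiple of $e_{xx}$ by $\af_{xx}\in R$, so $\vf(\af_{xx}e_{xx})=\af_{xx}\vf(e_x)$ and likewise $\vf(\bt_{xx}e_{xx})=\bt_{xx}\vf(e_x)$; as $\af_{xx},\bt_{xx}$ are central in $A$ and $\vf(e_x)$ is idempotent, $\vf(\af_{xx}e_{xx})\vf(\bt_{xx}e_{xx})=\af_{xx}\bt_{xx}\vf(e_x)=\vf(\af_{xx}\bt_{xx}e_{xx})$. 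Thus both conditions of \cref{a=b-in-A} hold and $\vf|_{D(P,R)}$ is a homomorphism.

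The genuine point is the diagonal case above: a Jordan isomorphism of the commutative ring $Re_{xx}$ onto an associative ring need not be a homomorphism, so $\vf(\af_{xx}\bt_{xx}e_{xx})=\vf(\af_{xx}e_{xx})\vf(\bt_{xx}e_{xx})$ cannot come from a purely Jordan-theoretic argument and really uses $R$-linearity --- which is exactly what lets one avoid the $2$-torsionfreeness invoked in \cite{BFK,Akkurts-Barker}. Everything else --- the reduction through \cref{vf-near-sum-of-tl-psi-and-tl-theta}, the appeal to \cref{a=b-in-A}, and the bookkeeping with the orthogonal idempotents $e_x$ --- is routine given the machinery of \cref{sec-jiso-FI(C)}.
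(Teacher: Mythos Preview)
Your proof is correct and follows the same route as the paper: reduce via \cref{vf-near-sum-of-tl-psi-and-tl-theta} to showing that $\vf|_{D(P,R)}$ is a homomorphism (and hence an anti-homomorphism, by commutativity of $D(P,R)$). The only difference is that the paper cites~\cite[Proposition~3.3]{BFK} for this fact, whereas you prove it directly via \cref{a=b-in-A} and $R$-linearity; your inline argument is exactly what that cited proposition does.
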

\begin{proof}
Indeed, it was proved in~\cite[Proposition 3.3]{BFK} that $\vf|_{D(P,R)}$ is a homomorphism and an anti-homomorphism at the same time.
\end{proof}

\section{Jordan isomorphisms of $FI(P,R)$}\label{jord-iso-FI(P_R)}

Observe that the condition that $\vf|_{D(\cC)}$ is a homomorphism or an anti-homo\-morphism from \cref{vf-near-sum-of-tl-psi-and-tl-theta} may fail for $\cC=\cC(P,R)$, where $P$ is a quasiordered set, which is not a poset and $R$ is a commutative ring. Indeed, suppose that $1<|P|<\infty$ and $x\le y$ for all $x,y\in P$, so that $P=\bar x=\{y\in P\mid y\sim x\}$ for an arbitrary fixed $x\in P$. In this case $FI(\cC)$ coincides with $D(\cC)$ and is isomorphic to the full matrix ring $M_n(R)$, where $n=|P|$. If $R$ has a non-trivial idempotent $e$, then the map $J(A)=eA+(1-e)A^T$, where $A^T$ is the transpose of $A$, is a Jordan automorphism of $M_n(R)$, which is neither a homomorphism, nor an anti-homomorphism. This is a particular case of the example given in the introduction of \cite{BeBreChe}. Notice also that in this case $J$ is the sum of a homomorphism and an anti-homomorphism, which is true for an arbitrary Jordan homomorphism of $M_n(R)$ by \cite[Theorem 7]{Jacobson-Rickart50}.

The above example shows that it would be natural to find some sufficient conditions under which a Jordan isomorphism $\vf:FI(\cC)\to A$ could be decomposed as the sum of a homomorphism and an anti-homomorphism. Our final aim will be to prove the existence of such a decomposition in the case $\cC=\cC(P,R)$, but we start with the results which hold in the general situation.

\subsection{Decomposition of $\vf|_{D(\cC)}$}

Since we already know by \cref{psi-and-0-hom-and-anti-hom,vf|_FZ-is-sum} that $\vf|_{FZ(\cC)}=\psi+\0$, where $\psi$ is a homomorphism and $\0$ is an anti-homomorphism, our first goal will be to find (under certain conditions) a similar decomposition for $\vf|_{D(\cC)}$.

For each $x\in\ob{\cC}$ we introduce the following notations
\begin{align*}
	D(\cC)_x&=\{\af_{xx}e_{xx}\mid \af_{xx}\in\mor xx\}\subseteq D(\cC),\\
	\vf_x&=\vf|_{D(\cC)_x}:D(\cC)_x\to \vf(D(\cC)_x).
\end{align*}
Observe that $D(\cC)_x$ is a ring with identity $e_x$, $D(\cC)_x\cong\mor xx$ and $D(\cC)\cong\prod_{x\in\ob{\cC}}D(\cC)_x$.

\begin{lem}\label{vf(eRe)-subring}
	Let $\vf:R\to S$ be a Jordan isomorphism of associative rings. Then for any idempotent $e\in R$, the set $\vf(eRe)$ is a ring under the operations of $S$.
\end{lem}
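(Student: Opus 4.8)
The statement to prove is: if $\vf:R\to S$ is a Jordan isomorphism of associative (unital) rings and $e\in R$ is an idempotent, then $\vf(eRe)$ is a ring under the operations of $S$. Since $\vf$ is additive and bijective, $\vf(eRe)$ is already an additive subgroup of $S$, so the only thing to check is that $\vf(eRe)$ is closed under multiplication (the ring here need not be unital, so there is nothing else to verify). I would first note that $e$ is an idempotent with $ee=e$, so by \cref{vf(e)vf(r)=vf(r)vf(e)} the element $\vf(e)$ commutes with $\vf(r)$ for every $r$ with $er=re$; in particular $\vf(e)$ is an idempotent (apply \cref{vf(r^2)} to $r=e$) and $\vf(ere)=\vf(e)\vf(r)\vf(e)$ for all $r\in R$ by \cref{vf(rsr)}. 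Hence $\vf(eRe)=\vf(e)\vf(R)\vf(e)=\vf(e)S\vf(e)$, the last equality because $\vf$ is surjective.

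So the task reduces to showing that $\vf(e)S\vf(e)$ is closed under multiplication, i.e. that $(\vf(e)a\vf(e))(\vf(e)b\vf(e))\in\vf(e)S\vf(e)$ for all $a,b\in S$; writing $f=\vf(e)$ this says $faf bf\in fSf$, which is automatic once we know $f$ is idempotent, since $faf\cdot fbf=f(afbf)\in fSf$. Wait — that already works: $f$ being idempotent gives $fafbf=f\cdot(afbf)$ and this lies in $fSf$ trivially. Thus the real content is simply that $f=\vf(e)$ is an idempotent of $S$, together with the identification $\vf(eRe)=fSf$; once those two facts are in hand, closure under multiplication is immediate.

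Let me lay out the steps in order. \emph{Step 1:} From $e^2=e$ and \cref{vf(r^2)}, deduce $f:=\vf(e)$ satisfies $f^2=f$. \emph{Step 2:} For $r\in R$, since $e(ere)=(ere)e=ere$, apply \cref{vf(rsr)} with the middle element $r$ and the outer element $e$ to get $\vf(ere)=\vf(e)\vf(r)\vf(e)=frf$ where I abusively write $\vf(r)$; more precisely $\vf(ere)=f\vf(r)f$. \emph{Step 3:} Conclude $\vf(eRe)=\{f\vf(r)f:r\in R\}=f\vf(R)f=fSf$, using surjectivity of $\vf$. \emph{Step 4:} Observe $fSf$ is an additive subgroup (clear) closed under multiplication: for $a,b\in S$, $(faf)(fbf)=f(afbf)f\in fSf$ using $f^2=f$ twice. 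Therefore $\vf(eRe)=fSf$ is a subring of $S$ (non-unital in general, with "local identity" $f$).

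**Expected main obstacle.** Honestly there is no deep obstacle here — the lemma is essentially a repackaging of \cref{vf(r^2)} and \cref{vf(rsr)}. The one point that deserves care is \emph{which} instance of \cref{vf(rsr)} to invoke in Step 2: one wants $\vf(ere)=\vf(e)\vf(r)\vf(e)$, which is exactly \cref{vf(rsr)} read with $r\leadsto e$, $s\leadsto r$; it is worth writing this substitution explicitly so the reader sees it is a direct application and not \cref{vf(e)vf(r)=vf(r)vf(e)} (which would only handle elements commuting with $e$). A secondary subtlety is purely definitional: since the ambient convention "all rings are unital" was declared, one should remark that $\vf(eRe)$ is a ring in the non-unital sense unless $e$ happens to be central, so "is a ring under the operations of $S$" must be understood as "is closed under addition, negation, and multiplication" — i.e. it is a (possibly non-unital) subring. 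With that understood, the proof is just Steps 1–4 above.
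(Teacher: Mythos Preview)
Your proof is correct. Both your argument and the paper's use the key identity $\vf(ere)=\vf(e)\vf(r)\vf(e)$ (an instance of \cref{vf(rsr)}) together with bijectivity of $\vf$; the organization differs slightly. You first identify $\vf(eRe)$ with the Peirce corner $fSf$, where $f=\vf(e)$, and then invoke the trivial fact that $fSf$ is closed under multiplication for any idempotent $f$. The paper instead argues closure element by element: given $r,s\in eRe$ it writes $\vf(r)\vf(s)=\vf(t)$ by surjectivity, then uses $\vf(e)\vf(r)=\vf(r)$, $\vf(s)\vf(e)=\vf(s)$ (from \cref{vf(e)vf(r)=vf(r)vf(e)}, since $e$ is the identity of $eRe$) together with \cref{vf(rsr)} to get $\vf(ete)=\vf(e)\vf(t)\vf(e)=\vf(r)\vf(s)=\vf(t)$, whence $t=ete\in eRe$ by injectivity. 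Your route is a bit more streamlined and makes the structural content (``$\vf$ carries the corner $eRe$ onto the corner $\vf(e)S\vf(e)$'') explicit; the paper's route avoids naming $fSf$ but is otherwise the same argument unrolled. Your remark about the unit is also on target: $\vf(eRe)$ is a ring with identity $\vf(e)$, exactly as the paper notes at the end of its proof.
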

\begin{proof}
	Clearly, $\vf(eRe)$ is a subgroup of the additive group of $S$. Let $r,s\in eRe$. Since $\vf$ is surjective, there exists $t\in R$ such that
	\begin{align}\label{vf(r)vf(s)=vf(t)}
	\vf(r)\vf(s)=\vf(t).
	\end{align}
	The idempotent $e$ is the identity of $eRe$, so $\vf(e)\vf(r)=\vf(r)$ and $\vf(s)\vf(e)=\vf(s)$ by \cref{vf(e)vf(r)=vf(r)vf(e)}. Therefore, in view of \cref{vf(r)vf(s)=vf(t),vf(rsr)},
	\begin{align*}
	\vf(ete)=\vf(e)\vf(t)\vf(e)=\vf(e)\vf(r)\vf(s)\vf(e)=\vf(r)\vf(s)=\vf(t),
	\end{align*}
	whence $t=ete$ thanks to the injectivity of $\vf$. Thus, $\vf(eRe)$ is closed under the multiplication in $S$, so it is a ring with identity $\vf(e)$.
\end{proof}

\begin{cor}\label{vf(D(C)_x)-subring}
	Let $\vf:FI(\cC)\to A$ be a Jordan isomorphism. Then for each $x\in\ob\cC$ the set $\vf(D(\cC)_x)$ is a ring under the operations of $A$.
\end{cor}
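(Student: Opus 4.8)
The plan is to deduce this directly from \cref{vf(eRe)-subring}. The first step is to identify $D(\cC)_x$ with the corner ring $e_x FI(\cC) e_x$. Indeed, by \cref{e_x-alpha-e_y} one has $e_x\af e_x=\af_{xx}e_{xx}$ for every $\af\in FI(\cC)$ (since $x\le x$ always holds), so $e_x FI(\cC) e_x\sst D(\cC)_x$; conversely every element $\af_{xx}e_{xx}$ of $D(\cC)_x$ is obtained in this way by taking $\af=\af_{xx}e_{xx}$, whence $e_x FI(\cC) e_x=\{\af_{xx}e_{xx}\mid\af_{xx}\in\mor xx\}=D(\cC)_x$.

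The second step is the appeal to the lemma: $e_x=\id_xe_{xx}$ is an idempotent of $FI(\cC)$, as noted right after \cref{af_xye_xy-cdot-bt_uve_uv}, so \cref{vf(eRe)-subring} applied to the Jordan isomorphism $\vf:FI(\cC)\to A$ and the idempotent $e=e_x$ shows that $\vf(e_x FI(\cC) e_x)$ is a ring under the operations of $A$, with identity $\vf(e_x)$. By the identification of the first step this set is exactly $\vf(D(\cC)_x)$, which is the claim. There is no genuine obstacle in this argument: the content sits entirely in \cref{vf(eRe)-subring}, and the only point deserving a line of verification is the equality $D(\cC)_x=e_x FI(\cC) e_x$, after which the corollary is an immediate specialization.
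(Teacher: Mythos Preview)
Your proof is correct and follows exactly the same approach as the paper: the paper's entire proof is the single observation that $D(\cC)_x=e_x FI(\cC)e_x$, after which \cref{vf(eRe)-subring} applies. You have simply spelled out the justification of this equality in more detail.
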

\begin{proof}
	Indeed, $D(\cC)_x=e_xFI(\cC)e_x$.
\end{proof}

\begin{lem}\label{vf|_D(C)-is-the-sum-of-psi-and-theta}
Let $\vf: FI(\cC)\to A$ be a Jordan isomorphism. Then $\vf|_{D(\cC)}:D(\cC)\to A$ is the sum of a homomorphism and an anti-homomorphism if and only if for each $x\in\ob{\cC}$ the restriction $\vf_x:D(\cC)_x\to \vf(D(\cC)_x)$ is the sum of a homomorphism and an anti-homomorphism.
\end{lem}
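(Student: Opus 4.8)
The plan is to transfer both the hypothesis and the conclusion through the canonical block decomposition $D(\cC)\cong\prod_{x\in\ob\cC}D(\cC)_x$. Write $f_x:=\vf(e_x)$; by \cref{vf(e)vf(r)=0,vf(e)vf(r)=vf(r)vf(e)} these are pairwise orthogonal idempotents of $A$ with $f_x\vf(\af)=\vf(\af)f_x=\vf(\af_{xx}e_{xx})$ for every $\af\in D(\cC)$, and, arguing through \cref{vf(rsr)} and surjectivity of $\vf$ exactly as in the proof of \cref{vf(eRe)-subring}, $\vf(D(\cC)_x)=f_xAf_x$, a ring with identity $f_x$ (\cref{vf(D(C)_x)-subring}). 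The recurring device will be \cref{a=b-in-A}: any equality in $A$ may be checked block by block, and for every element I shall produce below the off-diagonal blocks $f_z(\cdot)f_w$ ($z\ne w$) vanish automatically by orthogonality of the $f_x$, so the verification always reduces to the diagonal blocks $f_x(\cdot)f_x$, where the data is block-wise. I would also record once and for all the elementary fact that an additive bijection $\Phi\colon R\to S$ of unital rings with $\Phi(1_R)=1_S$ is a sum of a homomorphism and an anti-homomorphism exactly when there is an idempotent $g\in S$ commuting with $\Phi(R)$ such that $a\mapsto g\Phi(a)g$ is a homomorphism and $a\mapsto(1_S-g)\Phi(a)(1_S-g)$ is an anti-homomorphism; in a decomposition $\Phi=\psi+\0$ one is forced to take $g=\psi(1_R)$, and then $\psi(a)=g\Phi(a)=\Phi(a)g$ and $\0(a)=(1_S-g)\Phi(a)$, the cross terms vanishing because $\psi$ and $\0$ take values in the orthogonal corners $gSg$ and $(1_S-g)S(1_S-g)$.

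For the forward implication, assume $\vf|_{D(\cC)}=\psi+\0$ and set $g:=\psi(\delta)$; by the fact above $g$ commutes with $\vf(D(\cC))$. Then $g_x:=gf_x=f_xgf_x$ is an idempotent of $\vf(D(\cC)_x)=f_xAf_x$ commuting with $\vf(D(\cC)_x)$, and since $f_x\vf(\af)f_x=\vf(\af)$ for $\af\in D(\cC)_x$ one gets $g_x\vf_x(\af)g_x=g\vf(\af)g=\psi(\af)$ and $(f_x-g_x)\vf_x(\af)(f_x-g_x)=\0(\af)$; hence $\vf_x=\psi|_{D(\cC)_x}+\0|_{D(\cC)_x}$ exhibits $\vf_x$ as the sum of a homomorphism and an anti-homomorphism with values in $\vf(D(\cC)_x)$.

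For the converse, for each $x$ fix a decomposition $\vf_x=\mu_x+\nu_x$ and, applying the elementary fact to $\vf_x$, let $g_x:=\mu_x(e_x)\in\vf(D(\cC)_x)$, an idempotent commuting with $\vf(D(\cC)_x)$ with $\mu_x(\af)=g_x\vf(\af)$ and $\nu_x(\af)=(f_x-g_x)\vf(\af)$ on $D(\cC)_x$. Let $\eta_xe_{xx}:=\vf^{-1}(g_x)\in D(\cC)_x$ (an idempotent, since $\vf$ and $\vf^{-1}$ preserve idempotents by \cref{vf(r^2)}), put $\eta:=\sum_{x\in\ob\cC}\eta_xe_{xx}\in D(\cC)$ — a legitimate element of $FI(\cC)$ even if infinitely many blocks are non-zero — and set $g:=\vf(\eta)$. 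Then $g$ is an idempotent with $gf_x=f_xg=\vf(\eta_xe_{xx})=g_x$, and the claim is that $\vf|_{D(\cC)}=g\vf(\cdot)g+(1-g)\vf(\cdot)(1-g)$ is the desired decomposition. Each of the three assertions — $g$ commutes with $\vf(D(\cC))$, $\af\mapsto g\vf(\af)g$ is a homomorphism, $\af\mapsto(1-g)\vf(\af)(1-g)$ is an anti-homomorphism — comes out of \cref{a=b-in-A}: for instance $f_x(g\vf(\af)\vf(\bt))f_x=g_x\vf(\af_{xx}e_{xx})\vf(\bt_{xx}e_{xx})=\mu_x(\af_{xx}e_{xx})\mu_x(\bt_{xx}e_{xx})=\mu_x(\af_{xx}\bt_{xx}e_{xx})=f_x(g\vf(\af\bt))f_x$, all off-diagonal blocks being zero; the parallel computation with $f_x-g_x$ in place of $g_x$ together with the anti-homomorphism property of $\nu_x$ settles $(1-g)\vf(\cdot)(1-g)$, and a simpler corner comparison gives $g\vf(\af)=\vf(\af)g$. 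Thus $\vf|_{D(\cC)}$ is the sum of the homomorphism $g\vf(\cdot)g$ and the anti-homomorphism $(1-g)\vf(\cdot)(1-g)$.

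The genuinely delicate part — where I expect almost all of the work to lie — is this last step. Because $\vf$ is only a Jordan isomorphism one cannot manipulate $g\vf(\af)\vf(\bt)$ symbolically, and $g=\vf(\eta)$ is not a priori central in $\vf(D(\cC))$, its preimage $\eta$ need not be central in $D(\cC)$. What makes everything go through is exactly that an element of $A$ is pinned down by its $f_x$-blocks via \cref{a=b-in-A}, while on each block the homomorphism and anti-homomorphism properties of $\mu_x$ and $\nu_x$ are available; the effort is purely in organising the corner bookkeeping so that no cross-block contributions ever appear.
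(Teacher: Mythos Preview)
Your argument is correct and rests on the same engine as the paper's proof: the product decomposition $D(\cC)\cong\prod_x D(\cC)_x$ together with \cref{a=b-in-A} to reduce every verification in $A$ to the diagonal corners $f_xAf_x=\vf(D(\cC)_x)$, where the block-wise decompositions $\vf_x=\mu_x+\nu_x$ are available. The difference is purely organisational. The paper assembles the global homomorphism by pulling each $\mu_x(\af_{xx}e_{xx})$ back through $\vf$ to $\tl\af_{xx}e_{xx}$, forming $\tl\af=\sum_x\tl\af_{xx}e_{xx}$, and setting $\psi(\af)=\vf(\tl\af)$; it then checks additivity and multiplicativity of $\psi$ corner by corner via \cref{a=b-in-A}. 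You instead isolate the single idempotent $g=\vf(\eta)$ with $\eta=\sum_x\vf^{-1}(\mu_x(e_x))$ and set $\psi=g\vf(\cdot)g$. Since $f_x\,g\vf(\af)\,f_x=g_x\vf(\af_{xx}e_{xx})=\mu_x(\af_{xx}e_{xx})=\vf(\tl\af_{xx}e_{xx})=f_x\vf(\tl\af)f_x$ and all off-diagonal corners vanish, the two constructions give literally the same map; your ``elementary fact'' (that a decomposition $\Phi=\psi+\0$ forces $\psi=\psi(1)\Phi(\cdot)$ with $\psi(1)$ an idempotent commuting with $\Phi(R)$) is what makes this packaging work. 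Your route has the small bonus of treating the ``only if'' direction explicitly, where the paper merely calls it obvious, and of exhibiting that the whole decomposition is controlled by one idempotent of $A$.
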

\begin{proof}
The ``only if'' part is obvious, so we only need to prove the ``if'' part. Let
\begin{align}\label{vf_x=psi_x+0_x}
	\vf_x=\psi_x + \0_x
\end{align}
be the decomposition of $\vf_x$, where $\psi_x,\0_x:D(\cC)_x\to \vf(D(\cC)_x)$ are a homomorphism and an anti-homomorphism, respectively. For $\af=\sum_{x\in \ob{\cC}}\af_{xx}e_{xx}\in D(\cC)$ we have   $\psi_{x}(\af_{xx}e_{xx})\in \vf(D(\cC)_x)$ for each $x\in \ob{\cC}$, so there exists $\tl\af_{xx}e_{xx}\in
D(\cC)_x$ such that
\begin{align}\label{psi_x(af_xx.e_xx)=vf(tl-af_xx.e_xx)}
	\psi_{x}(\af_{xx}e_{xx})=\vf(\tl\af_{xx}e_{xx}).
\end{align}
Similarly, for each $x \in \ob{\cC}$, there exists $\tl{\tl{\af}}_{xx}e_{xx}\in D(\cC)_x$ such that
\begin{align}\label{0_x(af_xx.e_xx)=vf(tl-tl-af_xx.e_xx)}
	\0_{x}(\af_{xx}e_{xx})=\vf(\tl{\tl{\af}}_{xx}e_{xx}).
\end{align}
Let
\begin{align*}%\label{tl-af-and-tl-tl-af}
	\tl\af= \sum_{x\in \ob{\cC}}\tl\af_{xx}e_{xx} \ \ \ \  \text{and} \  \ \ \ \tl{\tl{\af}}= \sum_{x\in \ob{\cC}}\tl{\tl{\af}}_{xx}e_{xx}
\end{align*}
and define $\psi, \0:D(\cC)\to A$ by
\begin{align}\label{psi(af)=vf(tl-af)-and-0(af)=vf(tl-tl-af)}
	\psi(\af)=\vf(\tl\af)  \ \ \ \ \text{and}  \ \ \ \ \0(\af)=\vf(\tl{\tl{\af}}).
\end{align}
For each $x\in \ob{\cC}$, by \cref{vf_x=psi_x+0_x,psi_x(af_xx.e_xx)=vf(tl-af_xx.e_xx),0_x(af_xx.e_xx)=vf(tl-tl-af_xx.e_xx)}
\begin{align*}
\vf(\af_{xx}e_{xx}) & =\vf_{x}(\af_{xx}e_{xx}) =\psi_{x}(\af_{xx}e_{xx})+\0_{x}(\af_{xx}e_{xx})\\
               & = \vf(\tl\af_{xx}e_{xx})+\vf(\tl{\tl{\af}}_{xx}e_{xx})\\
               & = \vf((\tl\af_{xx}  + \tl{\tl{\af}}_{xx})e_{xx}).
\end{align*}
Since $\vf$ is injective, $\af_{xx}e_{xx}=(\tl\af_{xx}+\tl{\tl{\af}}_{xx})e_{xx}$, for each $x\in \ob{\cC}$. Hence,
\begin{align*}%\label{af=tl-af+tl-tl-af}
	\af=\tl\af +\tl{\tl{\af}},
\end{align*}
and consequently
$$
\vf(\af)=\vf(\tl\af + \tl{\tl{\af}})=\psi(\af)+\0(\af)=(\psi+\0)(\af)
$$
in view of \cref{psi(af)=vf(tl-af)-and-0(af)=vf(tl-tl-af)}. Thus $\vf|_{D(\cC)}=\psi+\0$.

Now, we show that $\psi$ is a homomorphism. Let us first prove that $\psi$ is additive. Take $$\af=\sum_{x\in \ob{\cC}}\af_{xx}e_{xx}, \ \ \bt=\sum_{x\in \ob{\cC}}\bt_{xx}e_{xx} \in  D(\cC).$$ For each $x\in \ob{\cC}$, using \cref{psi_x(af_xx.e_xx)=vf(tl-af_xx.e_xx)}, we have
\begin{align*}
\vf((\tl\af_{xx}+\tl\bt_{xx})e_{xx}) & =\vf(\tl\af_{xx}e_{xx})+ \vf(\tl\bt_{xx}e_{xx})=\psi_{x}(\af_{xx}e_{xx})+ \psi_{x}(\bt_{xx}e_{xx})\\
 & = \psi_{x}((\af+\bt)_{xx}e_{xx})=\vf(\widetilde{(\af+\bt)}_{xx}e_{xx}),
\end{align*}
 so $\tl\af+\tl\bt=\widetilde{\af+\bt}$. It follows by \cref{psi(af)=vf(tl-af)-and-0(af)=vf(tl-tl-af)} that
\begin{align*}
  \psi(\af+\bt) & =\vf(\widetilde{\af+\bt}) = \vf(\tl\af +\tl\bt)=\vf(\tl\af)+\vf(\tl\bt)=\psi(\af)+\psi(\bt).
\end{align*}
In order to show that $\psi(\af\bt)=\psi(\af)\psi(\bt)$, we will use \cref{a=b-in-A}. Let $u\in \ob{\cC}$. By \cref{vf(rsr),e_x-alpha-e_y,psi(af)=vf(tl-af)-and-0(af)=vf(tl-tl-af)} we have
\begin{align*}
  \vf(e_u)\psi(\af\bt)\vf(e_u)  & = \vf(e_u)\vf(\widetilde{\af\bt})\vf(e_u)= \vf(e_u\widetilde{\af\bt}e_u) = \vf((\widetilde{\af\bt})_{uu} e_{uu}) \\
   & = \psi_u((\af\bt)_{uu}e_{uu})= \psi_u(\af_{uu}e_{uu})\psi_u(\bt_{uu}e_{uu}) = \vf(\tl\af_{uu}e_{uu})\vf(\tl\bt_{uu}e_{uu})\\
   & = \vf(e_u\tl\af e_u)\vf(e_u\tl\bt e_u) = \vf(e_u)\vf(\tl\af)\vf(e_u)\vf(e_u)\vf(\tl\bt)\vf(e_u).
\end{align*}
In view of the fact that $e_u$ is a central idempotent of $D(\cC)$ and \cref{vf(e)vf(r)=vf(r)vf(e)}, the last product equals
\begin{align*}
\vf(e_u)^2\vf(\tl\af)\vf(\tl\bt)\vf(e_u)^2 = \vf(e_u)\vf(\tl\af)\vf(\tl\bt)\vf(e_u) = \vf(e_u)\psi(\af)\psi(\bt)\vf(e_u).
  \end{align*}
Now, consider $u,v\in \ob{\cC}$, $u<v$. Since the central idempotents $e_u$ and $e_v$ are orthogonal, equalities \cref{vf(e)vf(r)=0,psi(af)=vf(tl-af)-and-0(af)=vf(tl-tl-af)} imply
\begin{align*}
   \vf(e_u)\psi(\af\bt)\vf(e_v)+ \vf(e_v)\psi(\af\bt)\vf(e_u)  & = \vf(e_u)\vf(\widetilde{\af\bt})\vf(e_v)+\vf(e_v)\vf(\widetilde{\af\bt})\vf(e_u)\\
  & = \vf(e_u)\vf(e_v)\vf(\widetilde{\af\bt})+\vf(e_v)\vf(e_u)\vf(\widetilde{\af\bt}) = 0.
  \end{align*}
Similarly,
\begin{align*}
& \vf(e_u)\psi(\af) \psi(\bt)\vf(e_v)+ \vf(e_v)\psi(\af)\psi(\bt)\vf(e_u) \\
     & = \vf(e_u)\vf(\tl\af)\vf(\tl\bt)\vf(e_v)+\vf(e_v)\vf(\tl\af)\vf(\tl\bt)\vf(e_u)=0.
\end{align*}
It follows that
$$ \vf(e_u)\psi(\af\bt)\vf(e_v)+ \vf(e_v)\psi(\af\bt)\vf(e_u) = \vf(e_u)\psi(\af) \psi(\bt)\vf(e_v)+ \vf(e_v)\psi(\af)\psi(\bt)\vf(e_u).$$
By \cref{a=b-in-A}, $\psi(\af\bt)=\psi(\af)\psi(\bt)$, and therefore $\psi$ is a homomorphism.

The proof that $\theta$ is an anti-homomorphism is analogous.
\end{proof}

\subsection{Decomposition of $\vf$ into a sum}

Let $\vf: FI(\cC)\to A$ be a Jordan isomorphism and write $\vf|_{FZ(\cC)}=\psi_Z+\0_Z$, where $\psi_Z$ and $\0_Z$ are given by \cref{defn-of-psi,defn-of-0}. Assume also that for all $x\in\ob{\cC}$ the map $\vf_x$ is the sum \cref{vf_x=psi_x+0_x} of a homomorphism $\psi_x$ and an anti-homomorphism $\0_x$, and let $\vf|_{D(\cC)}=\psi_D+\0_D$ be the corresponding decomposition of $\vf|_{D(\cC)}$ constructed in \cref{vf|_D(C)-is-the-sum-of-psi-and-theta}. Define $\psi, \0:FI(\cC)\to A$ by
\begin{align}
\psi(\af) &=  \psi_D(\af_D)+\psi_Z(\af_Z), \label{psi(af)=psi_D+psi_Z}\\
\0(\af)   &=  \0_D(\af_D)+\0_Z(\af_Z).\label{0(af)=0_D+0_Z}
\end{align}
We shall show that the properties of $\psi$ and $\0$ are determined by the local behavior of these maps. More precisely, given $x,y\in\ob{\cC}$, we denote by $\{x,y\}$ the full subcategory of $C$ whose objects are $x$ and $y$. Identifying $FI(\{x,y\})$ with $e_{\{x,y\}}FI(\cC)e_{\{x,y\}}\sst FI(\cC)$, we have the following result.
\begin{lem}\label{psi-and-0-on-FI(xy)}
	The map $\psi$ defined by \cref{psi(af)=psi_D+psi_Z} is a homomorphism if and only if for all $x<y$ and $\af,\bt\in FI(\{x,y\})$
	\begin{align}
		 \psi_x(\af_{xx}e_{xx})\psi_Z(\bt_{xy}e_{xy})&=\psi_Z(\af_{xx}\bt_{xy}e_{xy}),\label{psi_D(af_xx.e_xx)psi_Z(bt_xy.e_xy)=psi_Z(af_xx.bt_xy.e_xy)}\\
		 \psi_Z(\bt_{xy}e_{xy})\psi_y(\af_{yy}e_{yy})&=\psi_Z(\bt_{xy}\af_{yy}e_{xy}).\label{psi_Z(bt_xy.e_xy)psi_D(af_yy.e_yy)=psi_Z(bt_xy.af_yy.e_xy)}
	\end{align}
	 Similarly, $\0$ given by \cref{0(af)=0_D+0_Z} is an anti-homomorphism if and only if for all $x<y$ and $\af,\bt\in FI(\{x,y\})$
	 \begin{align}
	 \0_Z(\bt_{xy}e_{xy})\0_x(\af_{xx}e_{xx})&=\0_Z(\af_{xx}\bt_{xy}e_{xy}),\label{0_Z(bt_xy.e_xy)0_D(af_xx.e_xx)=0_Z(af_xx.bt_xy.e_xy)}\\
	 \0_y(\af_{yy}e_{yy})\0_Z(\bt_{xy}e_{xy})&=\0_Z(\bt_{xy}\af_{yy}e_{xy}).\label{0_D(af_yy.e_yy)0_Z(bt_xy.e_xy)=0_Z(bt_xy.e_xy.af_xx)}
	 \end{align}
\end{lem}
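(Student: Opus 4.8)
The plan is to show that each of the two equivalences reduces, through \cref{a=b-in-A}, to an identity supported on a single pair of objects --- which is exactly what conditions \cref{psi_D(af_xx.e_xx)psi_Z(bt_xy.e_xy)=psi_Z(af_xx.bt_xy.e_xy),psi_Z(bt_xy.e_xy)psi_D(af_yy.e_yy)=psi_Z(bt_xy.af_yy.e_xy)} (resp.\ \cref{0_Z(bt_xy.e_xy)0_D(af_xx.e_xx)=0_Z(af_xx.bt_xy.e_xy),0_D(af_yy.e_yy)0_Z(bt_xy.e_xy)=0_Z(bt_xy.e_xy.af_xx)}) record. Two facts will be used throughout. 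First, the map $\psi$ of \cref{psi(af)=psi_D+psi_Z} is additive, restricts to $\psi_Z$ on $FZ(\cC)$ and to $\psi_x$ on $D(\cC)_x$ for each $x\in\ob{\cC}$; moreover, from the construction of $\psi_D$ in the proof of \cref{vf|_D(C)-is-the-sum-of-psi-and-theta} together with \cref{vf(e)vf(r)=vf(r)vf(e),psi_x(af_xx.e_xx)=vf(tl-af_xx.e_xx)} (note that $e_x$ commutes with every diagonal series and hence with the $\psi_D$-preimage, which is diagonal) one gets $\vf(e_x)\psi_D(\gm)=\psi_x(\gm_{xx}e_{xx})$ for all $\gm\in D(\cC)$, and similarly $\0_D(\gm)\vf(e_x)=\0_x(\gm_{xx}e_{xx})$. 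Second, $\psi_D,\psi_Z$ are homomorphisms and $\0_D,\0_Z$ are anti-homomorphisms, by \cref{vf|_D(C)-is-the-sum-of-psi-and-theta,psi-and-0-hom-and-anti-hom}.

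I will treat $\psi$; the statement for $\0$ is obtained by the mirror computation, with ``homomorphism'' replaced by ``anti-homomorphism'', the $\psi$-formulas of \cref{vf(e_X)psi(af)-etc,psi(af_x-e_xy)} replaced by their $\0$-counterparts, and \cref{0_Z(bt_xy.e_xy)0_D(af_xx.e_xx)=0_Z(af_xx.bt_xy.e_xy),0_D(af_yy.e_yy)0_Z(bt_xy.e_xy)=0_Z(bt_xy.e_xy.af_xx)} used in place of \cref{psi_D(af_xx.e_xx)psi_Z(bt_xy.e_xy)=psi_Z(af_xx.bt_xy.e_xy),psi_Z(bt_xy.e_xy)psi_D(af_yy.e_yy)=psi_Z(bt_xy.af_yy.e_xy)}. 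For the ``only if'' part, assume $\psi$ is a homomorphism and fix $x<y$ and $\af,\bt\in FI(\{x,y\})$. Since $\af_{xx}e_{xx},\af_{yy}e_{yy},\bt_{xy}e_{xy}$ all lie in $FI(\{x,y\})\sst FI(\cC)$, the identities $\psi\bigl((\af_{xx}e_{xx})(\bt_{xy}e_{xy})\bigr)=\psi(\af_{xx}e_{xx})\psi(\bt_{xy}e_{xy})$ and $\psi\bigl((\bt_{xy}e_{xy})(\af_{yy}e_{yy})\bigr)=\psi(\bt_{xy}e_{xy})\psi(\af_{yy}e_{yy})$ hold; rewriting the left-hand products as $\af_{xx}\bt_{xy}e_{xy},\bt_{xy}\af_{yy}e_{xy}\in FZ(\cC)$ and invoking the restrictions of $\psi$ above turns them into precisely \cref{psi_D(af_xx.e_xx)psi_Z(bt_xy.e_xy)=psi_Z(af_xx.bt_xy.e_xy),psi_Z(bt_xy.e_xy)psi_D(af_yy.e_yy)=psi_Z(bt_xy.af_yy.e_xy)}.

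For the ``if'' part, write $\af=\af_D+\af_Z$ and $\bt=\bt_D+\bt_Z$; since $D(\cC)$ is a subring and $FZ(\cC)$ an ideal, $(\af\bt)_D=\af_D\bt_D$ and $(\af\bt)_Z=\af_D\bt_Z+\af_Z\bt_D+\af_Z\bt_Z$. Expanding $\psi(\af\bt)$ and $\psi(\af)\psi(\bt)$ via \cref{psi(af)=psi_D+psi_Z} and using that $\psi_D$ and $\psi_Z$ are homomorphisms, one sees that $\psi$ is a homomorphism if and only if
\begin{align*}
\psi_Z(\af_D\bt_Z)=\psi_D(\af_D)\psi_Z(\bt_Z)\quad\text{and}\quad\psi_Z(\af_Z\bt_D)=\psi_Z(\af_Z)\psi_D(\bt_D)
\end{align*}
for all $\af_D,\bt_D\in D(\cC)$ and $\af_Z,\bt_Z\in FZ(\cC)$. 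I would prove the first of these with \cref{a=b-in-A}, the second being symmetric. As $\af_D\bt_Z\in FZ(\cC)$, the compressions $\vf(e_x)\psi_Z(\af_D\bt_Z)\vf(e_x)$ and $\vf(e_y)\psi_Z(\af_D\bt_Z)\vf(e_x)$ vanish by \cref{vf(e_y)psi(af)vf(e_x)=vf(e_x)0(af)vf(e_y)=vf(e_x)psi(af)vf(e_x)=vf(e_x)0(af)vf(e_x)=0}; on the right-hand side, $\vf(e_x)\psi_D(\af_D)=\psi_x((\af_D)_{xx}e_{xx})$ is absorbed by the neighbouring $\vf(e_x)$ (the identity of $\vf(D(\cC)_x)$ by \cref{vf(D(C)_x)-subring}), so $\vf(e_x)\psi_D(\af_D)\psi_Z(\bt_Z)\vf(e_x)$ and $\vf(e_y)\psi_D(\af_D)\psi_Z(\bt_Z)\vf(e_x)$ reduce to multiples of $\vf(e_x)\psi_Z(\bt_Z)\vf(e_x)=0$ and $\vf(e_y)\psi_Z(\bt_Z)\vf(e_x)=0$, respectively. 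For $x<y$, \cref{psi(af_x-e_xy)} gives
\begin{align*}
\vf(e_x)\psi_Z(\af_D\bt_Z)\vf(e_y)=\psi_Z\bigl((\af_D\bt_Z)_{xy}e_{xy}\bigr)=\psi_Z\bigl((\af_D)_{xx}(\bt_Z)_{xy}e_{xy}\bigr),
\end{align*}
while $\vf(e_x)\psi_D(\af_D)\psi_Z(\bt_Z)\vf(e_y)=\psi_x((\af_D)_{xx}e_{xx})\,\psi_Z((\bt_Z)_{xy}e_{xy})$ by the same facts; that these two agree is exactly \cref{psi_D(af_xx.e_xx)psi_Z(bt_xy.e_xy)=psi_Z(af_xx.bt_xy.e_xy)}. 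Thus \cref{a=b-in-A} yields the first mixed identity, and the second follows in the same way from \cref{psi_Z(bt_xy.e_xy)psi_D(af_yy.e_yy)=psi_Z(bt_xy.af_yy.e_xy)}.

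The main obstacle is the bookkeeping in the ``if'' direction: one must keep track of which of the three compressions --- at the diagonal, at $x<y$, and at $y<x$ --- of each side vanishes and for what reason, and ``localize'' $\psi_D$ to $\psi_x$ correctly by using that $e_x$ is a central idempotent of $D(\cC)$ and that $\vf(e_x)$ is the identity of $\vf(D(\cC)_x)$. Conceptually, nothing here goes beyond the techniques already used in \cref{psi-and-0-hom-and-anti-hom,vf-near-sum-of-tl-psi-and-tl-theta,vf|_D(C)-is-the-sum-of-psi-and-theta}.
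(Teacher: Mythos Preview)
Your argument is correct and follows essentially the same route as the paper: reduce $\psi$ being a homomorphism to the two mixed identities $\psi_D(\af)\psi_Z(\bt)=\psi_Z(\af\bt)$ and $\psi_Z(\bt)\psi_D(\af)=\psi_Z(\bt\af)$, then use \cref{a=b-in-A} together with the localization $\vf(e_x)\psi_D(\gm)=\psi_x(\gm_{xx}e_{xx})$ and \cref{psi(af_x-e_xy),vf(e_y)psi(af)vf(e_x)=vf(e_x)0(af)vf(e_y)=vf(e_x)psi(af)vf(e_x)=vf(e_x)0(af)vf(e_x)=0} to identify the compressions with \cref{psi_D(af_xx.e_xx)psi_Z(bt_xy.e_xy)=psi_Z(af_xx.bt_xy.e_xy),psi_Z(bt_xy.e_xy)psi_D(af_yy.e_yy)=psi_Z(bt_xy.af_yy.e_xy)}. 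The only organizational difference is that you treat the ``only if'' direction by direct substitution, whereas the paper handles both directions simultaneously by showing that each mixed identity is, via \cref{a=b-in-A}, \emph{equivalent} to its local form.
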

\begin{proof}
	Since $\psi_Z$ and $\psi_D$ are homomorphisms, it is clear from \cref{psi(af)=psi_D+psi_Z} that $\psi$ is a homomorphism if and only if for all $\af\in D(\cC)$ and $\bt\in FZ(\cC)$
	\begin{align}
		\psi_D(\af)\psi_Z(\bt)&=\psi_Z(\af\bt),\label{psi_D(af)psi_Z(bt)=psi_Z(af.bt)}\\
		\psi_Z(\bt)\psi_D(\af)&=\psi_Z(\bt\af).\label{psi_Z(bt)psi_D(af)=psi_Z(bt.af)}
	\end{align}
	Given arbitrary $x<y$, by \cref{psi(af_xe-e_xy)=vf(e_x)psi(af)vf(e_y),vf(e_y)psi(af)vf(e_x)=vf(e_x)0(af)vf(e_y)=vf(e_x)psi(af)vf(e_x)=vf(e_x)0(af)vf(e_x)=0} we have
	\begin{align*}
		\vf(e_x)\psi_Z(\af\bt)\vf(e_y)+\vf(e_y)\psi_Z(\af\bt)\vf(e_x)=\psi_Z(\af_{xx}\bt_{xy}e_{xy}).
	\end{align*}
	Now, since $e_x$ is a central idempotent of $D(\cC)$, we obtain by \cref{psi_x(af_xx.e_xx)=vf(tl-af_xx.e_xx),psi(af)=vf(tl-af)-and-0(af)=vf(tl-tl-af),psi(af_xe-e_xy)=vf(e_x)psi(af)vf(e_y),vf(e_y)psi(af)vf(e_x)=vf(e_x)0(af)vf(e_y)=vf(e_x)psi(af)vf(e_x)=vf(e_x)0(af)vf(e_x)=0,vf(e)vf(r)=vf(r)vf(e),vf(af_xxe_x)=vf(e_x)vf(af)vf(e_x)}
	\begin{align*}
	\vf(e_x)\psi_D(\af)\psi_Z(\bt)\vf(e_y)&=\vf(e_x)\vf(\tl\af)\psi_Z(\bt)\vf(e_y)\\
	&=\vf(e_x)\vf(\tl\af)\vf(e_x)\psi_Z(\bt_{xy}e_{xy})\\
	&=\vf(\tl\af_{xx}e_{xx})\psi_Z(\bt_{xy}e_{xy})\\
	&=\psi_x(\af_{xx}e_{xx})\psi_Z(\bt_{xy}e_{xy})
	\end{align*}
and
\begin{align*}
	\vf(e_y)\psi_D(\af)\psi_Z(\bt)\vf(e_x)&=\vf(e_y)\vf(\tl\af)\psi_Z(\bt)\vf(e_x)\\
	&=\vf(\tl\af)\vf(e_y)\psi_Z(\bt)\vf(e_x)=0.
	\end{align*}
	Since also
	\begin{align*}
		\vf(e_x)\psi_Z(\af\bt)\vf(e_x)=0=\vf(e_x)\psi_D(\af)\psi_Z(\bt)\vf(e_x)
	\end{align*}
thanks to \cref{psi(af)=vf(tl-af)-and-0(af)=vf(tl-tl-af),vf(e_y)psi(af)vf(e_x)=vf(e_x)0(af)vf(e_y)=vf(e_x)psi(af)vf(e_x)=vf(e_x)0(af)vf(e_x)=0,vf(e)vf(r)=vf(r)vf(e)}, we see that \cref{psi_D(af)psi_Z(bt)=psi_Z(af.bt)} is equivalent to \cref{psi_D(af_xx.e_xx)psi_Z(bt_xy.e_xy)=psi_Z(af_xx.bt_xy.e_xy)} in view of \cref{a=b-in-A}. Similarly, \cref{psi_Z(bt)psi_D(af)=psi_Z(bt.af)} is equivalent to \cref{psi_Z(bt_xy.e_xy)psi_D(af_yy.e_yy)=psi_Z(bt_xy.af_yy.e_xy)}.
	
	The proof of the statement for $\0$ is analogous.
\end{proof}

\begin{lem}\label{vf|_FI(xy)}
	Let $\cC=\cC(P,R)$, where $R$ is a commutative ring and $P$ is a quasiordered set such that $1<|\x|<\infty$ for every class $\x\in \bar{P}$. Let $A$ be an $R$-algebra. Then for each $R$-linear Jordan isomorphism $\vf:FI(\cC)\to A$ and for every $\x\in \bar{P}$ there exists a decomposition of $\vf_{\x}$ into the sum $\psi_{\x}+\0_{\x}$ of a homomorphism $\psi_{\x}$ and an anti-homomorphism $\0_{\x}$, such that  \cref{psi_D(af_xx.e_xx)psi_Z(bt_xy.e_xy)=psi_Z(af_xx.bt_xy.e_xy),psi_Z(bt_xy.e_xy)psi_D(af_yy.e_yy)=psi_Z(bt_xy.af_yy.e_xy),0_Z(bt_xy.e_xy)0_D(af_xx.e_xx)=0_Z(af_xx.bt_xy.e_xy),0_D(af_yy.e_yy)0_Z(bt_xy.e_xy)=0_Z(bt_xy.e_xy.af_xx)} hold.
\end{lem}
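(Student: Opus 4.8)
The plan is to construct, for every class $\x\in\bar P$, a Jacobson--Rickart decomposition of $\vf_{\x}$ and then to verify the four identities of \cref{psi-and-0-on-FI(xy)}. The whole role of the hypothesis $1<|\x|<\infty$ is that it makes $D(\cC)_{\x}$ a \emph{full finite} matrix ring, so that its matrix‑unit idempotents are connected in both directions; this ``return morphism'' is exactly what will let me transport a vanishing statement from one idempotent to another, which is the crux.

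First I would set up the local pieces. Fix $\x$ and put $m:=|\x|$, so $2\le m<\infty$ and $D(\cC)_{\x}\cong\mor{\x}{\x}=RFM_{\x\times\x}(R)\cong M_m(R)$, with identity $e_{\x}$. By \cref{vf(D(C)_x)-subring}, $\vf(D(\cC)_{\x})$ is a ring with identity $\vf(e_{\x})$ and $\vf_{\x}$ is a Jordan isomorphism onto it; since $m\ge2$, \cite[Theorem~7]{Jacobson-Rickart50} supplies a central idempotent $\e_{\x}$ of $\vf(D(\cC)_{\x})$ such that $\psi_{\x}:=\e_{\x}\vf_{\x}$ is a homomorphism, $\0_{\x}:=(\vf(e_{\x})-\e_{\x})\vf_{\x}$ an anti‑homomorphism, $\vf_{\x}=\psi_{\x}+\0_{\x}$, and $\psi_{\x}(D(\cC)_{\x})\,\0_{\x}(D(\cC)_{\x})=\0_{\x}(D(\cC)_{\x})\,\psi_{\x}(D(\cC)_{\x})=0$. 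Writing $f_{pq}e_{\x\x}$ ($p,q\in\x$) for the matrix units of $D(\cC)_{\x}$ and $e_p:=f_{pp}e_{\x\x}$, the $\psi_{\x}(e_p)$ (resp.\ $\0_{\x}(e_p)$) are orthogonal idempotents with sum $\e_{\x}$ (resp.\ $\vf(e_{\x})-\e_{\x}$).

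Next I would reduce (i)--(iv). Fix $\x<\bar y$. Using \cref{a-in-D(cC)-bt-in-FZ(cC)}, \cref{af-in-FZ(cC)-bt-in-D(cC)} and the obvious one‑sided analogues for $\0_Z$ (same proofs), the right‑hand sides of (i)--(iv) become $\vf(\af_{xx}e_{xx})\psi_Z(\bt_{xy}e_{xy})$, $\psi_Z(\bt_{xy}e_{xy})\vf(\af_{yy}e_{yy})$, $\0_Z(\bt_{xy}e_{xy})\vf(\af_{xx}e_{xx})$, $\vf(\af_{yy}e_{yy})\0_Z(\bt_{xy}e_{xy})$; writing $\vf=\psi_{\x}+\0_{\x}$ on $D(\cC)_{\x}$ and $\vf=\psi_{\bar y}+\0_{\bar y}$ on $D(\cC)_{\bar y}$ turns (i)--(iv) into $\0_{\x}(a)\psi_Z(\gamma)=0$, $\psi_Z(\gamma)\0_{\bar y}(b)=0$, $\0_Z(\gamma)\psi_{\x}(a)=0$, $\psi_{\bar y}(b)\0_Z(\gamma)=0$ for all $a\in D(\cC)_{\x}$, $b\in D(\cC)_{\bar y}$, $\gamma=\bt_{xy}e_{xy}$. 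Since $\psi_Z(\gamma)=\vf(e_{\x})\vf(\gamma)\vf(e_{\bar y})$ and $\0_Z(\gamma)=\vf(e_{\bar y})\vf(\gamma)\vf(e_{\x})$ (\cref{terms-of-psi-and-0}, \eqref{defn-of-psi}, \eqref{defn-of-0}), the product formulas let me absorb $a$ (resp.\ $b$) into the argument of $\psi_Z$ (resp.\ $\0_Z$), and then, splitting $e_{\x}=\sum_p e_p$ and $e_{\bar y}=\sum_j e'_j$ into matrix‑unit idempotents, everything reduces to proving, for all $p,p'\in\x$, $j\in\bar y$ (and the statements symmetric in $\bar y$),
\[
\0_{\x}(e_p)\,\psi_Z(f_{p'j}e_{\x\bar y})=0\qquad\text{and}\qquad \0_Z(f_{p'j}e_{\x\bar y})\,\psi_{\x}(e_p)=0 .
\]

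The main obstacle is the case $p=p'$. The case $p\ne p'$ is immediate: $\vf(e_{p'})\psi_Z(f_{p'j}e_{\x\bar y})=\psi_Z(f_{p'j}e_{\x\bar y})$ by \cref{a-in-D(cC)-bt-in-FZ(cC)} (as $e_{p'}f_{p'j}e_{\x\bar y}=f_{p'j}e_{\x\bar y}$), while $\0_{\x}(e_p)\vf(e_{p'})=0$ (because $\0_{\x}(e_p)\psi_{\x}(e_{p'})=0$ by annihilation and $\0_{\x}(e_p)\0_{\x}(e_{p'})=\0_{\x}(e_{p'}e_p)=0$), so $\0_{\x}(e_p)\psi_Z(f_{p'j}e_{\x\bar y})=0$, and symmetrically for $\0_Z$. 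For $p=p'$ pick $q\in\x$ with $q\ne p$ (possible since $m\ge2$) and use the bidirectional relations $(f_{pq}e_{\x\x})(f_{qj}e_{\x\bar y})=f_{pj}e_{\x\bar y}$ and $(f_{qp}e_{\x\x})(f_{pj}e_{\x\bar y})=f_{qj}e_{\x\bar y}$: by \cref{a-in-D(cC)-bt-in-FZ(cC)},
\[
\psi_Z(f_{pj}e_{\x\bar y})=\vf(f_{pq}e_{\x\x})\,\vf(f_{qp}e_{\x\x})\,\psi_Z(f_{pj}e_{\x\bar y}),
\]
and expanding each factor as $\psi_{\x}(\cdot)+\0_{\x}(\cdot)$ kills the two mixed terms (annihilation) and collapses the rest --- using that $\psi_{\x}$ is a homomorphism, $\0_{\x}$ an anti‑homomorphism, and $f_{pq}f_{qp}=f_{pp}$, $f_{qp}f_{pq}=f_{qq}$ --- to $\psi_{\x}(e_p)+\0_{\x}(e_q)$. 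Hence $\psi_Z(f_{pj}e_{\x\bar y})=(\psi_{\x}(e_p)+\0_{\x}(e_q))\psi_Z(f_{pj}e_{\x\bar y})$, whereas $(\psi_{\x}(e_p)+\0_{\x}(e_p))\psi_Z(f_{pj}e_{\x\bar y})=\vf(e_p)\psi_Z(f_{pj}e_{\x\bar y})=\psi_Z(f_{pj}e_{\x\bar y})$, so subtracting gives $\0_{\x}(e_p)\psi_Z(f_{pj}e_{\x\bar y})=\0_{\x}(e_q)\psi_Z(f_{pj}e_{\x\bar y})=0$ by the already‑settled case $p\ne p'$. The companion identity $\0_Z(f_{pj}e_{\x\bar y})\psi_{\x}(e_p)=0$ follows the same way from $\0_Z(f_{pj}e_{\x\bar y})=\0_Z(f_{pj}e_{\x\bar y})\vf(f_{qp}e_{\x\x})\vf(f_{pq}e_{\x\x})$, and the $\bar y$‑symmetric statements by running the argument with the matrix units of $D(\cC)_{\bar y}$ acting on the right. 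Finally, since $\gamma$, $a\gamma$ and $\gamma b$ are supported on the single pair $(\x,\bar y)$, they are \emph{finite} $R$‑linear combinations of the $f_{p'j}e_{\x\bar y}$, so additivity and $R$‑linearity upgrade the matrix‑unit identities to (i)--(iv). The one delicate point is the ``index swap'' in the anti‑homomorphic term: $\vf(f_{pq})\vf(f_{qp})$ contributes $\0_{\x}(e_q)$, not $\0_{\x}(e_p)$, and it is this mismatch, matched against the $p\ne p'$ vanishing, that forces the $p=p'$ quantity to die --- a mechanism that disappears if there is no return morphism $f_{qp}$, i.e.\ if $|\x|=1$.
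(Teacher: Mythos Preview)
Your argument is correct and takes a genuinely different route from the paper's. The paper restricts $\vf$ to the two-object subcategory $FI(\{\x,\bar y\})\cong FI(Q,R)$ for a suitable finite quasiordered set $Q$ and invokes Case~2 of \cite{Akkurts-Barker} as a black box to obtain a decomposition $\vf_{\x,\bar y}=\vf_1+\vf_2$ via a central idempotent $f$; it must then argue, by inspecting the internal construction of $f$ in \cite{Akkurts-Barker}, that the diagonal piece $(\vf_1)_{\x}$ is independent of the auxiliary choice of $\bar y$, and separately that $\vf_1$ agrees with $\psi_Z$ on $\mor{\x}{\bar y}$. You instead apply \cite[Theorem~7]{Jacobson-Rickart50} directly to each single block $D(\cC)_{\x}\cong M_m(R)$ and verify the four compatibility identities by a hands-on matrix-unit computation, the crux being the index swap $\0_{\x}(f_{pq}e_{\x\x})\0_{\x}(f_{qp}e_{\x\x})=\0_{\x}(e_q)$ that reduces the diagonal case $p=p'$ to the already-settled off-diagonal one. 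Your approach is more self-contained (it uses only the classical Jacobson--Rickart theorem, not the later \cite{Akkurts-Barker}) and, since $\psi_{\x}$ is built from $\x$ alone, sidesteps any well-definedness check; the paper's route is shorter once one is willing to unpack that reference. One minor expository point: the phrase ``absorb $a$ into the argument of $\psi_Z$'' together with ``splitting $e_{\x}=\sum_p e_p$'' is a bit cryptic for the reduction in the $\0_{\x}$-slot; the clean way to pass from a general $a\in D(\cC)_{\x}$ to the diagonal idempotents is simply $\0_{\x}(a)=\0_{\x}(a)\0_{\x}(e_{\x})=\sum_p\0_{\x}(a)\0_{\x}(e_p)$, after which your matrix-unit vanishing finishes the job.
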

\begin{proof}
	Let $\bar x<\bar y$ and consider $Q\sst P$, such that $\bar Q=\{\bar x,\bar y\}$. Observe that $Q$ is a finite quasiordered set, whose classes contain at least 2 elements. Moreover,
	\begin{align*}
		FI(Q,R)\cong FI(\{\bar x,\bar y\}),
	\end{align*}
	so, the restriction $\vf_{\bar x,\bar y}$ of $\vf$ to $FI(\{\bar x,\bar y\})$ can be identified with a Jordan isomorphism $FI(Q,R)\to B$, where $B=\vf(FI(\{\bar x,\bar y\}))$ is an $R$-algebra by \cref{vf(eRe)-subring} and $R$-linearity of $\vf$. It follows from Case 2 of~\cite{Akkurts-Barker} that
	\begin{align}\label{vf_bar_x_bar_y=vf_1+vf_2}
		\vf_{\bar x,\bar y}=\vf_1+\vf_2,
	\end{align}
	where $\vf_1,\vf_2:FI(\{\bar x,\bar y\})\to B$ are a homomorphism and an anti-homomorphism, respectively. Moreover,
	\begin{align}\label{vf_1-and-vf_2-on-FI(xy)}
		\vf_1(\af)=\vf_{\bar x,\bar y}(\af)f,\ \vf_2(\af)=\vf_{\bar x,\bar y}(\af)g
	\end{align}
	for some pair of central orthogonal idempotents $f,g\in B$ whose sum is the identity of $B$.
	
	By the construction of $f$ in~\cite{Akkurts-Barker} we see that $f=f_{\bar x}+f_{\bar y}$, where $f_{\bar x}$ and $f_{\bar y}$ are orthogonal idempotents, $f_{\bar x}$ is a polynomial of the values of $(\vf_{\bar x,\bar y})_{\bar x}=\vf_{\bar x}$, and $f_{\bar y}$ is a polynomial of the values of $\vf_{\bar y}$. The idempotent $g$ also has a similar decomposition $g=g_{\bar x}+g_{\bar y}$. Therefore, for all $\af\in D(\cC)_{\bar x}$ using \cref{vf(e)vf(r)=vf(r)vf(e),vf(e)vf(r)=0,vf_1-and-vf_2-on-FI(xy)} we have
	\begin{align}\label{vf_1-and-vf_2-on-D(C)}
		\vf_1(\af)=\vf_{\bar x,\bar y}(\af)f=\vf_{\bar x}(\af)f=\vf_{\bar x}(\af)\vf(e_{\bar x})(f_{\bar x}+f_{\bar y})=\vf_{\bar x}(\af)f_{\bar x},
	\end{align}
	which shows that $(\vf_1)_{\bar x}$ depends only on $\bar x$ and does not depend on $\bar y$ with $\bar x<\bar y$. By the similar reason $(\vf_2)_{\bar x}$ depends only on $\bar x$. Thus, we may define
	\begin{align}\label{psi_D-and-0_D-from-vf_1-and-vf_2-on-D(C)}
		\psi_{\bar x}=(\vf_1)_{\bar x},\  \ \0_{\bar x}=(\vf_2)_{\bar x}.
	\end{align}
	It follows from \cref{vf_bar_x_bar_y=vf_1+vf_2} that $\vf_{\x}=\psi_{\bar x}+\0_{\bar x}$.
	
	Now take $u\in\bar x$, $v\in\bar y$ and denote by $\e_{uv}\in\mor{\bar x}{\bar y}$, $\e_{uu}\in\mor{\bar x}{\bar x}$, $\e_{vv}\in\mor{\bar y}{\bar y}$ the corresponding matrix units. Analyzing the proof of Case 2 of~\cite{Akkurts-Barker}, one has for all $r\in R$
	\begin{align}\label{vf(r.e_uv.e_bar_x_bary)f=f_uv}
		\vf(r\e_{uv}e_{\bar x\bar y})f=\vf(\e_{uu}e_{\bar x\bar x})\vf(r\e_{uv}e_{\bar x\bar y})\vf(\e_{vv}e_{\bar y\bar y}).
	\end{align}
	But $e_{\bar x}=\e_{uu}e_{\bar x\bar x}+(e_{\bar x}-\e_{uu}e_{\bar x\bar x})$, where
	\begin{align*}
		(e_{\bar x}-\e_{uu}e_{\bar x\bar x})\cdot r\e_{uv}e_{\bar x\bar y}=r\e_{uv}e_{\bar x\bar y}\cdot (e_{\bar x}-\e_{uu}e_{\bar x\bar x})=0.
	\end{align*}
	Hence, $\vf(\e_{uu}e_{\bar x\bar x})\vf(r\e_{uv}e_{\bar x\bar y})=\vf(e_{\bar x})\vf(r\e_{uv}e_{\bar x\bar y})$ by \cref{vf(e)vf(r)=0}. Analogously, we obtain $\vf(r\e_{uv}e_{\bar x\bar y})\vf(\e_{vv}e_{\bar y\bar y})=\vf(r\e_{uv}e_{\bar x\bar y})\vf(e_{\bar y})$. It follows from \cref{vf(r.e_uv.e_bar_x_bary)f=f_uv,vf_1-and-vf_2-on-FI(xy),vf(e_x)psi(af)vf(e_y)=vf(e_x)vf(af)vf(e_y),psi(af_xe-e_xy)=vf(e_x)psi(af)vf(e_y)} that
	\begin{align*}
		\vf_1(r\e_{uv}e_{\bar x\bar y})=\vf(e_{\bar x})\vf(r\e_{uv}e_{\bar x\bar y})\vf(e_{\bar y})=\psi_Z(r\e_{uv}e_{\bar x\bar y}).
	\end{align*}
	Consequently,
	\begin{align}\label{vf_1-coincides-with-psi}
		\vf_1(\af_{\bar x\bar y}e_{\bar x\bar y})=\psi_Z(\af_{\bar x\bar y}e_{\bar x\bar y})
	\end{align}
	for arbitrary $\af_{\bar x\bar y}\in\mor{\bar x}{\bar y}$. Similarly
	\begin{align*}%\label{vf_2-coincides-with-0}
		\vf_2(\af_{\bar x\bar y}e_{\bar x\bar y})=\0_Z(\af_{\bar x\bar y}e_{\bar x\bar y}).
	\end{align*}
	Since $\vf_1$ is a homomorphism, we have by \cref{psi_D-and-0_D-from-vf_1-and-vf_2-on-D(C),vf_1-coincides-with-psi}
	\begin{align*}
		\psi_{\bar x}(\af_{\bar x\bar x}e_{\bar x\bar x})\psi_Z(\bt_{\bar x\bar y}e_{\bar x\bar y})&=\vf_1(\af_{\bar x\bar x}e_{\bar x\bar x})\vf_1(\bt_{\bar x\bar y}e_{\bar x\bar y})=\vf_1(\af_{\bar x\bar x}\bt_{\bar x\bar y}e_{\bar x\bar y})\\
		&=\psi_Z(\af_{\bar x\bar x}\bt_{\bar x\bar y}e_{\bar x\bar y}).
	\end{align*}
	Thus, $\psi_{\bar x}$ satisfies \cref{psi_D(af_xx.e_xx)psi_Z(bt_xy.e_xy)=psi_Z(af_xx.bt_xy.e_xy)}. The proof that it also satisfies \cref{psi_Z(bt_xy.e_xy)psi_D(af_yy.e_yy)=psi_Z(bt_xy.af_yy.e_xy)} is similar. Analogously one proves \cref{0_Z(bt_xy.e_xy)0_D(af_xx.e_xx)=0_Z(af_xx.bt_xy.e_xy),0_D(af_yy.e_yy)0_Z(bt_xy.e_xy)=0_Z(bt_xy.e_xy.af_xx)}.
\end{proof}

We are ready to prove the main result of \cref{jord-iso-FI(P_R)}.

\begin{thrm}\label{vf=psi+0}
Let $R$ be a commutative ring and $P$ a quasiordered set such that $1< |\x| < \infty$ for all $\x\in \bar{P}$. Let $A$ be an $R$-algebra. Then each $R$-linear Jordan isomorphism $\vf:FI(P,R)\to A$ is the sum of a homomorphism and an anti-homomorphism.
\end{thrm}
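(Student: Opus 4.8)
The plan is to assemble the statement from the structural lemmas already established, feeding the local decompositions produced by Lemma~\ref{vf|_FI(xy)} into Lemmas~\ref{vf|_D(C)-is-the-sum-of-psi-and-theta} and~\ref{psi-and-0-on-FI(xy)}. Write $\cC=\cC(P,R)$. First I would record, via \cref{vf|_FZ-is-sum,psi-and-0-hom-and-anti-hom}, that $\vf|_{FZ(\cC)}=\psi_Z+\0_Z$, where $\psi_Z,\0_Z:FZ(\cC)\to A$ are the maps of \cref{defn-of-psi,defn-of-0}, $\psi_Z$ being a homomorphism and $\0_Z$ an anti-homomorphism. Next, since every class $\x\in\bar P$ satisfies $1<|\x|<\infty$, Lemma~\ref{vf|_FI(xy)} applies and yields, for each $\x$, a decomposition $\vf_{\x}=\psi_{\x}+\0_{\x}$ of the corner map into a homomorphism and an anti-homomorphism which moreover satisfies the four compatibility identities \cref{psi_D(af_xx.e_xx)psi_Z(bt_xy.e_xy)=psi_Z(af_xx.bt_xy.e_xy),psi_Z(bt_xy.e_xy)psi_D(af_yy.e_yy)=psi_Z(bt_xy.af_yy.e_xy),0_Z(bt_xy.e_xy)0_D(af_xx.e_xx)=0_Z(af_xx.bt_xy.e_xy),0_D(af_yy.e_yy)0_Z(bt_xy.e_xy)=0_Z(bt_xy.e_xy.af_xx)} relating the $\psi_{\x},\0_{\x}$ to $\psi_Z,\0_Z$.

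Now I would invoke Lemma~\ref{vf|_D(C)-is-the-sum-of-psi-and-theta}: since each $\vf_{\x}$ is a sum of a homomorphism and an anti-homomorphism, $\vf|_{D(\cC)}$ admits the decomposition $\psi_D+\0_D$, where $\psi_D$ is the homomorphism and $\0_D$ the anti-homomorphism constructed in the proof of that lemma from the chosen corner decompositions $\vf_{\x}=\psi_{\x}+\0_{\x}$. With $\psi_D,\0_D,\psi_Z,\0_Z$ fixed, define $\psi,\0:FI(\cC)\to A$ by \cref{psi(af)=psi_D+psi_Z,0(af)=0_D+0_Z}. Using the unique decomposition $\af=\af_D+\af_Z$ and additivity of $\vf$, the equalities $\vf|_{D(\cC)}=\psi_D+\0_D$ and $\vf|_{FZ(\cC)}=\psi_Z+\0_Z$ combine to give $\vf=\psi+\0$. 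Finally, Lemma~\ref{psi-and-0-on-FI(xy)} says precisely that $\psi$ is a homomorphism and $\0$ an anti-homomorphism provided the four identities over each pair of comparable objects $\x<\bar y$ hold — and these identities are exactly the conclusions of Lemma~\ref{vf|_FI(xy)} for the very decompositions we selected. Hence $\vf=\psi+\0$ is the required sum of a homomorphism and an anti-homomorphism.

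The argument is largely bookkeeping once the preceding lemmas are available; the one point that requires care is the consistency of the choices, and this is where I expect any friction to lie. The decomposition $\vf|_{D(\cC)}=\psi_D+\0_D$ furnished by Lemma~\ref{vf|_D(C)-is-the-sum-of-psi-and-theta} depends on which corner decompositions $\vf_{\x}=\psi_{\x}+\0_{\x}$ one starts from, and Lemma~\ref{psi-and-0-on-FI(xy)} detects multiplicativity of the glued maps in terms of those same $\psi_{\x},\0_{\x}$ together with $\psi_Z,\0_Z$; so one must use throughout the corner decompositions coming from Lemma~\ref{vf|_FI(xy)} (those compatible with $\psi_Z,\0_Z$, obtained from Case~2 of~\cite{Akkurts-Barker}), rather than arbitrary ones. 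The hypotheses $1<|\x|<\infty$ for all $\x$ and the $R$-linearity of $\vf$ enter only inside Lemma~\ref{vf|_FI(xy)}: they guarantee that each corner $FI(\{\x,\bar y\})$ is a structural matrix algebra over $R$ whose equivalence classes all have at least two elements and that its image under $\vf$ is again an $R$-algebra, which is what makes \cite{Akkurts-Barker} applicable there. Thus the main subtlety is organizational — keeping the same family of corner decompositions in all three lemmas — and no substantive difficulty remains at this stage.
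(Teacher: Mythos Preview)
Your proposal is correct and follows essentially the same route as the paper's own proof: invoke \cref{vf|_FI(xy)} to obtain the compatible corner decompositions, feed them into \cref{vf|_D(C)-is-the-sum-of-psi-and-theta} to build $\psi_D,\0_D$, and then apply \cref{psi-and-0-on-FI(xy)} to conclude that the glued maps $\psi,\0$ of \cref{psi(af)=psi_D+psi_Z,0(af)=0_D+0_Z} are a homomorphism and an anti-homomorphism with $\vf=\psi+\0$. Your explicit remark about keeping the same family of corner decompositions throughout is exactly the coherence point on which the argument hinges, and the paper handles it the same way.
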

\begin{proof}
 By	\cref{vf|_FI(xy)}, for every $\bar x \in \bar P$ there is a decomposition $\vf_{\bar x}=\psi_{\bar x}+\0_{\bar x}$ of $\vf_{\bar x}$ where $\psi_{\bar x}$ and $\0_{\bar x}$ are a homomorphism and an anti-homomorphism, respectively, for which \cref{psi_D(af_xx.e_xx)psi_Z(bt_xy.e_xy)=psi_Z(af_xx.bt_xy.e_xy),psi_Z(bt_xy.e_xy)psi_D(af_yy.e_yy)=psi_Z(bt_xy.af_yy.e_xy),0_Z(bt_xy.e_xy)0_D(af_xx.e_xx)=0_Z(af_xx.bt_xy.e_xy),0_D(af_yy.e_yy)0_Z(bt_xy.e_xy)=0_Z(bt_xy.e_xy.af_xx)} hold. By \cref{vf|_D(C)-is-the-sum-of-psi-and-theta,psi-and-0-on-FI(xy)} this leads to a decomposition $\vf|_{D(\cC)}=\psi_D+\0_D$, such that the maps $\psi$ and $\0$ given by \cref{psi(af)=psi_D+psi_Z,0(af)=0_D+0_Z} are a homomorphism and an anti-homomorphism, respectively. Obviously, $\vf=\psi+\0$.
\end{proof}

\bibliography{bibl}{}
\bibliographystyle{acm}

%\bibliography{bibl}{}
%\bibliographystyle{acm}

\end{document}